\newcommand{\CC}{{\mathbb{C}}}
\newcommand{\FF}{{\mathbb{F}}}
\newcommand{\ZZ}{{\mathbb{Z}}}
\newcommand{\QQ}{{\mathbb{Q}}}
\newcommand{\fA}{{\mathfrak{A}}}
\newcommand{\fS}{{\mathfrak{S}}}
\newcommand{\bC}{{\mathbf{C}}}
\newcommand{\bE}{{\mathbf{E}}}
\newcommand{\bF}{{\mathbf{F}}}
\newcommand{\bG}{{\mathbf{G}}}
\newcommand{\bH}{{\mathbf{H}}}
\newcommand{\bL}{{\mathbf{L}}}
\newcommand{\bO}{{\mathbf{O}}}
\newcommand{\bR}{{\mathbf{R}}}
\newcommand{\bT}{{\mathbf{T}}}
\newcommand{\bZ}{{\mathbf{Z}}}
\newcommand{\cE}{{\mathcal{E}}}
\newcommand{\cO}{{\mathcal{O}}}
\newcommand{\cS}{{\mathcal{S}}}
\newcommand{\cX}{{\mathcal{X}}}
\newcommand{\cY}{{\mathcal{Y}}}
\newcommand{\Aut}{{\operatorname{Aut}}}
\newcommand{\Diag}{{\operatorname{Diag}}}
\newcommand{\Gal}{{\operatorname{Gal}}}
\newcommand{\Inn}{{\operatorname{Inn}}}
\newcommand{\IBr}{{\operatorname{IBr}}}
\newcommand{\Irr}{{\operatorname{Irr}}}
\newcommand{\Mat}{{\operatorname{Mat}}}
\newcommand{\Out}{{\operatorname{Out}}}
\newcommand{\Syl}{{\operatorname{Syl}}}
\newcommand{\PSL}{{\operatorname{L}}}
\newcommand{\SL}{{\operatorname{SL}}}
\newcommand{\SU}{{\operatorname{SU}}}
\newcommand{\PSU}{{\operatorname{U}}}
\newcommand{\tw}[1]{{}^#1\!}
\newcommand{\tchi}{\tilde\chi}
\newcommand{\tla}{\tilde\lambda}
\newcommand{\RLG}{{R_\bL^\bG}}
\newcommand{\RTG}{{R_\bT^\bG}}
\newcommand{\w}{\widetilde}
\let\vhi=\varphi
\let\la=\lambda
\def\nor{\triangleleft\,}
\def\irr#1{{\rm Irr}(#1)}
\def\cent#1#2{{\bf C}_{#1}(#2)}
\def\norm#1#2{{\bf N}_{#1}(#2)}
\newtheorem{thm}{Theorem}[section]
\newtheorem{lem}[thm]{Lemma}
\newtheorem{cor}[thm]{Corollary}
\newtheorem{prop}[thm]{Proposition}
\newtheorem*{thmA}{Main Theorem}
\theoremstyle{definition}
\theoremstyle{remark}
\newtheorem{rem}[thm]{Remark}
\begin{document}

\title[Blocks with One Modular Character]{On Blocks with One Modular Character}

\date{\today}

\author{Gunter Malle}
\address{FB Mathematik, TU Kaiserslautern, Postfach 3049,
         67653 Kaisers\-lautern, Germany.}
\email{malle@mathematik.uni-kl.de}
\author{Gabriel Navarro}
\address{Departament d'\`Algebra, Universitat de Val\`encia, 46100 Burjassot,
         Val\`encia, Spain.}
\email{gabriel@uv.es}
\author{Britta Sp\"ath}
\address{FB Mathematik, TU Kaiserslautern, Postfach 3049,
         67653 Kaisers\-lautern, Germany.}
\email{spaeth@mathematik.uni-kl.de}

\begin{abstract}
Suppose that $B$ is a Brauer $p$-block of a finite group $G$ with a unique
modular character $\vhi$. We prove that $\vhi$ is liftable to an ordinary
character of $G$ (which moreover is $p$-rational for odd $p$). This confirms
the basic set conjecture for these blocks.
\end{abstract}

\thanks{The first and third author gratefully acknowledge financial support
 by ERC Advanced Grant 291512. The second author is partially supported by the
 Spanish Ministerio de Educaci\'on y Ciencia Proyectos MTM2013-40464-P and
 Prometeo II/Generalitat Valenciana.}

\keywords{blocks with one irreducible module, nilpotent blocks}

\subjclass[2010]{20C20, 20C05, 20C33}

\maketitle


\section{Introduction}   \label{sec:intro}

Let $G$ be a finite group and let $p$ be a prime. While the Brauer $p$-blocks
of $G$ consisting of one ordinary irreducible character were understood by
Richard Brauer himself a long time ago, the blocks containing just one modular
character are still a mystery. The study of the Brou\'e--Puig nilpotent blocks
\cite{BP} (a canonical example of blocks with one modular character and, in
some sense, the easiest) is already quite complicated.
But not every block with one modular character is nilpotent (except possibly in
quasi-simple groups.)

\medskip

From the point of view of algebras, blocks with one modular character are the
most natural. If $F$ is an algebraically closed field of characteristic $p$,
and $B$ is a $p$-block of $G$ (that is, an indecomposable two-sided ideal of
the group algebra $FG$), then the algebra $B$ has a unique irreducible
$F$-representation if and only if $B/{\bf J}(B)$ is a matrix algebra,
where ${\bf J}(B)$ is the Jacobson radical of $B$. We see that from several
perspectives, the blocks with one modular character constitute a fundamental
object that deserves study.
\medskip

If $B$ has a unique irreducible modular character, say $\vhi$, then the
decomposition numbers of the algebra $B$ are simply the numbers
$\chi(1)/\vhi(1)$, where $\chi$ runs over the irreducible complex characters
in $B$. No general interpretation of these numbers is known. (If $B$ is
nilpotent, for instance,  these are the irreducible character degrees of any
defect group of $B$, see \cite[Thm. 1.2(2)]{BP}.) Our main result is that
one of these numbers should always be 1. In other words, $\vhi$ should always
lift to an ordinary (complex) character of $G$.

\begin{thmA}
 Let $G$ be a finite group and let $p$ be a prime. Assume that $B$ is a
 $p$-block of $G$ and  $\IBr(B)=\{\vhi\}$. Then there exists $\chi\in\Irr(B)$
 such that $\chi(1)=\vhi(1)$.
 If $p\neq 2$, $\chi$ can be chosen to be $p$-rational.
\end{thmA}

Our Main Theorem was known for nilpotent blocks (by the work of Brou\'e--Puig
\cite{BP}). It is also known for $p$-solvable groups by the celebrated
Fong--Swan Theorem (while the $p$-rationality part follows from a classical
theorem of M. Isaacs \cite{Isa1}). Our proof of the general case uses the
Classification of Finite Simple Groups, and the main
obstacle is to understand the $p$-blocks $B$ of quasi-simple groups $G$
such that $\IBr(B)$ is contained in a single $\Aut(G)$-orbit. We believe that
their classification will turn out to be fundamental when further studying
blocks with one modular character.

This work started from an observation by R. Kessar and M. Linckelmann.
Recall that the basic set conjecture asserts that for every block $B$ of a
finite group $G$, there exists a subset $\mathcal B$ of $\irr G$ such that the
set $\mathcal B^0$ of their restrictions to $p$-regular elements is a
$\ZZ$-basis of the ring $\ZZ[\IBr(B)]$ of generalized Brauer characters of $B$.
(See for instance \cite{GH}.) In the case where $\IBr(B)=\{\varphi\}$, as
pointed out by Kessar and Linckelmann, the basic set conjecture is equivalent
to proving that $\varphi$ is liftable, which is our Main Theorem.

This liftability of $\varphi$ in our main result is, once again, a shadow of
some deeper, structural conjecture that R. Kessar and M. Linckelmann have
proposed: every block with one modular irreducible character is Morita
equivalent over the ring of $p$-adics to a block of a $p$-solvable group.
(This is consistent with their results in \cite{K12} and \cite{KL10}, and it
would imply the liftability statement in our Main Theorem, using the
Fong--Swan Theorem and the fact that blocks which are Morita equivalent over
the ring of $p$-adics have the same decomposition matrix, see
\cite[Sect.~2.2, Ex.~3]{B}.) This deeper conjecture, however, seems out of
reach with the present methods.

Finally, for $p$ odd, we show that in the situation of the Main Theorem one
can also choose the lift $\chi$ to be $p$-rational. Nevertheless we can't
answer the question of G.~R.~Robinson whether this $p$-rational lift is
unique, although we believe that this is the case. None of this seems to be
implied by the general Kessar--Linckelmann conjecture.

\section{A result on simple groups}   \label{sec:notLie}

In this section and the following we start our investigation of blocks of
finite quasi-simple groups all of whose irreducible Brauer characters lie in
a single orbit under the automorphism group. In Section~\ref{sec:main proof}
it will turn out that exactly those blocks are relevant for our inductive
proof.
One can see the importance of those blocks already in the following situation:
assume that in the situation of the Main Theorem there exists a quasi-simple
normal subgroup $X$ of $G$. Then the blocks of $X$ covered by a block of $G$
with one Brauer character have the above mentioned property.

Let $p$ be a prime. Our notation for blocks and characters of finite groups
follows \cite{N}. Hence we have fixed a maximal ideal $M$ of the ring of
algebraic integers containing $p$ with respect to which Brauer characters
of finite groups are constructed.

\begin{thm}   \label{thm:quasisimples}
 Suppose that $G$ is a quasi-simple group, $p$ is a prime with
 $p\nmid |\bZ(G)|$, and $B$ is a $p$-block of $G$. Let $A=\Aut(G)$.
 If $\IBr(B)$ consists of $A$-conjugates of $\vhi$, then there exists some
 $\chi \in \Irr(G)$ such that
 \begin{itemize}
  \item[\rm(1)] $\chi^0=\vhi$;
  \item[\rm(2)] $\chi$ is $p$-rational if $p\ne2$;
  \item[\rm(3)] the stabilisers $I_A(\chi)=I_A(\vhi)=:I$ coincide; and
  \item[\rm(4)] for $P \in\Syl_p(I)$, the character $\chi$ extends to some
   $\tilde \chi\in \Irr(G\rtimes P)$ with
   $p\nmid |\cent{G\rtimes P} G: \ker (\tilde \chi|_{\cent{G\rtimes P} G})|$.
 \end{itemize}
\end{thm}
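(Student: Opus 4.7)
My plan is to go through the classification of finite simple groups and treat each family of possible simple quotients $S = G/\bZ(G)$ in turn. The hypothesis $p\nmid |\bZ(G)|$ means that any $p$-block of $G$ lies above a single linear character of $\bZ(G)$ and that its Brauer characters correspond canonically to those of an associated block of a cover of $S$, so the problem is essentially one about (quasi-)simple groups. The assumption that $\IBr(B) = \{\vhi^a : a \in A\}$ is already very restrictive: it forces all members of $\IBr(B)$ to share the degree $\vhi(1)$, and combined with standard information on decomposition matrices this rules out the generic ``large'' blocks in each family, leaving only a short list of candidate blocks to analyse.

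For $S$ alternating or sporadic I would work through the tabulated decomposition matrices (from the Modular Atlas project and work of James, Bessenrodt--Olsson, and others), exhibiting in each surviving case a character $\chi\in\Irr(B)$ with $\chi(1)=\vhi(1)$ that is $p$-rational when $p$ is odd, and checking conditions (3) and (4) by hand. For groups of Lie type in non-defining characteristic, a Bonnaf\'e--Rouquier Morita equivalence would reduce the question to quasi-isolated blocks; the $A$-orbit condition then cuts things down to essentially unipotent blocks, where Lusztig's parametrisation supplies a canonical lift whose $p$-rationality at good primes is automatic, and the extension and stabiliser statements follow from the now fairly complete theory of unipotent characters under automorphisms. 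For defining characteristic, the irreducible $FG$-modules are indexed by restricted dominant weights and the $A$-action is controlled by diagonal, field and graph automorphisms, so the $A$-orbit hypothesis picks out the Steinberg block together with a small number of low-rank exceptions, all amenable to direct inspection.

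The main obstacle, I expect, is condition (4), which refines ordinary extendibility in a Clifford-theoretic direction: one must construct $\tilde\chi\in\Irr(G\rtimes P)$ whose restriction to $\cent{G\rtimes P}{G}=\bZ(G)\cdot\cent{P}{G}$ has kernel of $p'$-index. Since $p\nmid|\bZ(G)|$, the $\bZ(G)$ contribution is automatic; the substantive constraint is on the $p$-part of $\cent{P}{G}$, that is, on the elements of $P$ that act as inner automorphisms of $G$. Pure extendibility of $\chi$ to its inertia group in $\Aut(G)$ is by now standard via the third author's work and the various verifications of the inductive McKay condition for quasi-simple groups, but to realise the kernel condition one must further exploit the freedom to twist a given extension by a linear character of $P/[P,G]$; the $p$-rationality of $\chi$ from (2) provides the Galois-theoretic leverage needed to pin down a compatible twist. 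I expect this step, together with a handful of stubborn defining-characteristic exceptions, to be the technical heart of the proof.
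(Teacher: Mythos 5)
Your overall skeleton (CFSG case division, decomposition-matrix checks for alternating/sporadic groups, Bonnaf\'e--Rouquier reduction for Lie type in cross characteristic) matches the paper, but two of your key claims point in the wrong direction and leave real gaps. First, in non-defining characteristic the orbit condition does \emph{not} cut things down to ``essentially unipotent blocks'': unipotent (and more generally quasi-isolated) blocks of positive defect at bad primes are exactly the ones that get \emph{eliminated}, because they contain $\Aut(G)_B$-stable characters with distinct $a$-values (or distinct degrees) whose restrictions to $\ell'$-elements are linearly independent, forcing at least two orbits on $\IBr(B)$; this requires the Kessar--Malle tables of quasi-isolated blocks together with a unipotent-support/average-value argument, not Lusztig's parametrisation of unipotent characters. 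The blocks that actually survive are those where $\cE(G,s)$ is a single orbit whose reductions exhaust $\IBr(B)$ --- typically $C_{\bG^*}(s)$ a torus, so the block is covered by a nilpotent block of the regular embedding and can have arbitrarily large defect --- and for these the lift, its stabiliser, and its $\ell$-rationality come from the basic-set theorems of Geck--Hiss and the rationality of Lusztig induction, not from the theory of unipotent characters under automorphisms. (Relatedly, in defining characteristic the weight-counting argument shows there are \emph{no} surviving blocks of non-central defect, rather than ``the Steinberg block plus low-rank exceptions''.)

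Second, and more seriously, your plan for condition (4) cannot work as stated. For odd $p$ the implication (2)$\Rightarrow$(4) is immediate from the canonical $p$-rational extension (Isaacs, Thm.~6.30), so no twisting is needed there; the genuine difficulty is $p=2$, precisely the case in which (2) gives you no rationality and hence no ``Galois-theoretic leverage'' to pin down a twist by a character of $P/[P,G]$. The paper has to argue quite differently at $p=2$: one first extends inside $\tilde\bG^F$ using the Cabanes--Enguehard bijection $\cE(GA,2')\leftrightarrow\cE(G,2')$, then uses cyclicity of the Sylow $2$-subgroup of $\Aut(G)_\chi/\Diag(G)_\chi$ for most types, an equivariance theorem of Cabanes--Sp\"ath plus Bonnaf\'e's Shintani-descent result for $\SL_n(q)$, and for $D_n(q)$, $E_6(q)$ one shows the surviving blocks are nilpotent and invokes the K\"ulshammer--Puig extension theory; the kernel condition is then transported via a lemma identifying a Sylow $p$-subgroup of $\cent{G\rtimes P}{G}$ with a diagonal copy of $P\cap\mathrm{Inn}(G)$ (note your identification $\cent{G\rtimes P}{G}=\bZ(G)\cdot\cent{P}{G}$ is not correct when $P$ contains inner automorphisms). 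Without an argument of this kind the $p=2$ half of (4) --- which is the part actually needed in the inductive proof of the Main Theorem --- remains unproved.
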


The proof will be given in this section and the next, based on the
classification of finite simple groups. Assertions~(3) and~(4) will be crucial
in our inductive approach to arbitrary groups. Note that assertion~(2) implies
the statement in~(4) in case of odd $p$ by \cite[Thm.~(6.30)]{Isa}.

\subsection{General Observations}
We start by collecting some general observations, relating to our
Theorem~\ref{thm:quasisimples}, as well as about nilpotent blocks and their
characters.

\begin{lem}   \label{extendingclean}
 Suppose that $E \nor G$. Let $P$ be a $p$-subgroup of $G$ such that
 $P \cap E=Q \in \Syl_p(E)$, and assume that $\cent PE=1$.
 Let $\theta \in \irr E$ be $P$-invariant. Then $\theta$ extends to $EP$ if
 and only if $\theta$ has an extension $\tilde\theta$ to $H:=E\rtimes P$ such
 that $p\nmid |\cent{H} E: \ker(\tilde \theta|_{\cent{H}E})|$.
\end{lem}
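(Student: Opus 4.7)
The plan is to analyse the natural surjection $\pi\colon H\to EP$, $(e,p)\mapsto ep$, whose kernel is the diagonally embedded subgroup $K:=\{(q^{-1},q):q\in Q\}\cong Q$. Since $\pi$ restricts to the identity on $E$, every element of $K$ commutes with $E$, so $K\leq C:=\cent HE$. Pullback along $\pi$ gives a bijection between extensions of $\theta$ to $EP$ and those extensions $\tilde\theta$ of $\theta$ to $H$ whose kernel contains $K$. Thus the lemma reduces to showing that $K\leq\ker\tilde\theta$ if and only if $p\nmid|C:\ker(\tilde\theta|_C)|$.

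The structural heart of the argument, and the main difficulty, is to show that $K$ is a Sylow $p$-subgroup of $C$, with $p'$-complement $Z(E)$. First, $\cent PE=1$ together with $Q\in\Syl_p(E)$ forces $Z(E)\cap Q\leq\cent PE=1$, and since $Z(E)\cap Q$ is a Sylow $p$-subgroup of $Z(E)$, we obtain $p\nmid|Z(E)|$. Next, the conjugation map $\phi\colon P\to\Aut(E)$ is injective by hypothesis, and $\phi|_Q$ (the inner-automorphism map on $Q$) is injective as well by the previous observation, so $|\phi(Q)|=|Q|=|\Inn(E)|_p$. Hence $\phi(Q)$ is a Sylow $p$-subgroup of $\Inn(E)$, and the $p$-subgroup $\Inn(E)\cap\phi(P)$ of $\Inn(E)$, containing $\phi(Q)$, must equal it. A direct calculation in $EP$ then yields $\cent{EP}E=Z(E)$, so $C=\pi^{-1}(Z(E))$ has order $|Q|\cdot|Z(E)|$, and $K$ is indeed its Sylow $p$-subgroup.

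Once the structure of $C$ is in hand, the equivalence follows by a short Clifford--Schur argument. Since $C$ centralises $E$ and $\tilde\theta|_E=\theta$ is irreducible, Schur's lemma yields a linear character $\mu\colon C\to\CC^\times$ with $\tilde\theta|_C=\theta(1)\mu$, and $|C:\ker(\tilde\theta|_C)|$ equals the order of $\mu$. This order is coprime to $p$ if and only if $\mu$ vanishes on a Sylow $p$-subgroup of $C$, which by the previous paragraph is exactly $K$; equivalently, the representation affording $\tilde\theta$ contains $K$ in its kernel, and hence $\tilde\theta$ descends through $\pi$ to an extension of $\theta$ on $EP$. Conversely, any extension $\psi\in\Irr(EP)$ pulls back to $\tilde\theta:=\psi\circ\pi$, which is trivial on $K$ and hence satisfies the index condition by the same chain of equivalences.
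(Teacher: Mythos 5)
Your proof is correct and follows essentially the same route as the paper: both pull back along the surjection $H=E\rtimes P\twoheadrightarrow EP$ whose kernel is the diagonal copy of $Q$ sitting inside $\cent HE$, show that this copy is a (normal) Sylow $p$-subgroup of $\cent HE$, and then translate the kernel condition via Schur's lemma into the index condition. The only divergence is in the Sylow step, where the paper simply intersects $\cent HE$ with the Sylow $p$-subgroup $(\Delta Q)P$ of $H$ using $\cent PE=1$, while you compute $\cent{EP}E=\bZ(E)$ and hence $\cent HE=\Delta Q\times\bZ(E)$ explicitly; both arguments are valid, yours being slightly longer but more precise about the structure of the centraliser.
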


\begin{proof}
Since $E$ is normal, $P$ acts on $E$ as automorphisms by conjugation. Let
$H=E \rtimes P$ be the corresponding semidirect product, and view $E$ and
$P$ as subgroups of $H$. Now, the map $f: H \rightarrow EP$ given by
$f(e,x)=ex$ is a surjective group homomorphism with kernel
$\Delta Q=\{(x,x^{-1})\,|\, x \in Q \}$, a normal
$p$-subgroup of $H$ isomorphic to $Q$, which intersects trivially with
both $E$ and $P$. In particular, $\Delta Q \subseteq \cent H E$.
By orders, $(\Delta Q)P$ is a Sylow $p$-subgroup of $H$ and
$(\Delta Q)P \cap \cent H E= (\Delta Q) \cent PE=\Delta Q$, so we see
that $\Delta Q$ is a Sylow $p$-subgroup of $\cent H E$. Using that $f$
induces an isomorphism $H/\Delta Q \rightarrow EP$, we easily see that
$\theta$ extends to $EP$ if and only if $\theta$ has an extension
$\tilde \theta$ to $H$ that contains $\Delta Q$ in its kernel. Since
$\Delta Q \in \Syl_p(\cent H E)$, the rest of the claim easily follows.
\end{proof}

Before considering the blocks of specific simple groups we
prove that Theorem \ref{thm:quasisimples} holds whenever $B$
is nilpotent or is covered by a nilpotent block (in a specific situation).

\begin{prop}   \label{prop:nilpotent}
 Let $G$ be a finite group, $N\lhd G$ and $B$ be a $G$-invariant nilpotent
 $p$-block of $N$. Then for the unique $\vhi\in\IBr(B)$ there exists some
 $G$-invariant $\psi\in\Irr(B)$ with $\psi^0=\vhi$ such that $\psi$ extends
 to $Q$, where $Q/N$ is a Sylow $p$-subgroup of $G/N$.
 Moreover if $p\neq 2$, $\psi$ can be chosen to be $p$-rational.
\end{prop}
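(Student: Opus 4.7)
My plan is to take $\psi$ to be the canonical height-zero lift of $\vhi$ provided by the Broué--Puig structure theorem and then extend it to $Q$, treating the cases $p$ odd and $p=2$ separately.

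By \cite[Thm.~1.2]{BP}, for a defect group $D$ of $B$ there is a source-algebra isomorphism $B\cong\mathcal{O}D$ inducing a height-preserving bijection $\Irr(D)\leftrightarrow\Irr(B)$, $\eta\mapsto\chi_\eta$, with $\chi_\eta(1)=\eta(1)\vhi(1)$. I set $\psi:=\chi_{1_D}$, so $\psi(1)=\vhi(1)$ and hence $\psi^0=\vhi$ (as $\vhi$ is the only element of $\IBr(B)$ and decomposition numbers are nonnegative integers). The canonicity of the bijection forces $G$-invariance of $\psi$: since $B$ is $G$-invariant, the $G$-action on $\Irr(B)$ corresponds, via the source-algebra isomorphism, to an action on $\Irr(D)$ factoring through $\Aut(D)$, and $1_D$ is fixed by $\Aut(D)$. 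The same reasoning applied to the Galois action yields $p$-rationality of $\psi$ for $p$ odd, since $1_D$ is Galois-fixed.

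For the extension to $Q$, when $p$ is odd, $Q/N$ is a $p$-group of odd order and $\psi$ is a $p$-rational $Q$-invariant character, so Isaacs's extension theorem \cite[Thm.~(6.30)]{Isa} immediately provides the required extension. When $p=2$, I pass to a block $\tilde B$ of $Q$ covering $B$. Since $Q/N$ is a $p$-group and $\vhi$ is $Q$-invariant, $\vhi$ extends uniquely to $\tilde\vhi\in\IBr(Q)$ (the obstruction vanishes because $Q/N$ is a $p$-group and $\IBr(Q/N)$ is a singleton in characteristic $p$), and by Clifford theory $\IBr(\tilde B)=\{\tilde\vhi\}$. Nilpotence of $B$ together with $Q/N$ being a $p$-group then gives nilpotence of $\tilde B$, so Broué--Puig applied to $\tilde B$ furnishes a canonical character $\tilde\psi$ of degree $\tilde\vhi(1)=\vhi(1)$. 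Compatibility of the Broué--Puig correspondence with source-algebra restriction from a defect group $\tilde D$ of $\tilde B$ to $D=\tilde D\cap N$ (under which $1_{\tilde D}|_D=1_D$) then identifies $\tilde\psi|_N=\psi$, yielding the desired extension.

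The main obstacle I anticipate is the case $p=2$: concretely, one has to verify that the covering block $\tilde B$ is again nilpotent, which requires a careful analysis of defect subpairs and inertial quotients under the passage from $N$ to $Q$, and one has to establish compatibility of the Broué--Puig correspondence under restriction of source algebras in order to identify $\tilde\psi|_N=\psi$. The related Galois-equivariance of the correspondence—used for the $p$-rationality of $\psi$ when $p$ is odd—is a more standard byproduct of the intrinsic nature of the source-algebra isomorphism. Once these structural compatibilities are in hand, the rest of the argument is essentially bookkeeping, with Isaacs's theorem absorbing the odd-$p$ extension for free.
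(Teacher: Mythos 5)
There is a genuine gap, and it sits exactly where the real difficulty of this proposition lies. Your selection of $\psi=\chi_{1_D}$ rests on the claim that the $G$-action on $\Irr(B)$, transported through the Brou\'e--Puig identification, factors through $\Aut(D)$ and therefore fixes the character labelled by $1_D$ (and, similarly, that Galois acts through its action on $\Irr(D)$). Neither statement can be cited or deduced from \cite[Thm.~1.2]{BP}: the source-algebra description of a nilpotent block is not canonical in a $G$-equivariant way, and conjugation by an element of $G$ stabilising $B$ need not correspond to a group automorphism of $D$ on labels -- it can shift the labelling by a linear character of $D$. Note that for $p=2$ the degree does not single out your character: there are $|D:D'|$ lifts of $\vhi$ of degree $\vhi(1)$, all permuted by $G$, and no counting argument produces a fixed point. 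Establishing that some such lift is $G$-invariant is precisely what the paper has to work for; for odd $p$ it does so by invoking the \emph{uniqueness} of the $p$-rational character in a nilpotent block (uniqueness gives $G$-invariance for free, and then \cite[Thm.~(6.30)]{Isa} gives both the extension to $Q$ and the $p$-rationality), a fix you could adopt, but your stated justification does not supply it.

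For $p=2$ the remaining steps are also unproved theorem-level assertions rather than bookkeeping. That the block $\tilde B$ of $Q$ covering $B$ is again nilpotent when $Q/N$ is a $2$-group is true but requires an argument (a Brauer-pair/coprime-action analysis, or the K\"ulshammer--Puig structure theory); more seriously, the ``compatibility of the Brou\'e--Puig correspondence with source-algebra restriction'' yielding $\tilde\psi|_N=\psi$ is not an available result -- the relation between the source algebras of $B$ and $\tilde B$ is exactly the content of \cite{KP}. And even if you sidestep this by \emph{defining} $\psi:=\tilde\psi|_N$ (which is irreducible by a degree count), you only obtain $Q$-invariance, whereas the proposition needs invariance under all of $G$; your two halves (invariance from the $N$-side, extension from the $Q$-side) never get glued. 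The paper's actual $p=2$ proof avoids both problems by working globally: it takes the Brauer correspondent $B'$ in $\norm ND$ and its unique character $\psi'$ with $D\subseteq\ker(\psi')$, passes to a central $p'$-extension $\hat G$ of all of $G$ on which $\psi'$ extends, and uses the K\"ulshammer--Puig Morita equivalence \cite[1.20.3]{KP} between $\cO\hat Ge$ and $\cO\norm{\hat G}De'$ to produce a multiplicity-one character $\tilde\psi$ of $\hat G$ whose restriction to $N$ is simultaneously irreducible, $G$-invariant, a lift of $\vhi$, and extendible to $Q$. Some input of this depth is unavoidable here, and your proposal currently replaces it with assertions.
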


\begin{proof}
By the characterisation of characters in nilpotent $p$-blocks when $p>2$ there
is a unique character $\psi\in\Irr(B)$ that is $p$-rational,
see \cite[Thm.~1.2]{BP}. This character has the claimed properties according to
\cite[Thm.~(6.30)]{I}.

When $p=2$ we use the following considerations: Let $D$ be a defect group
of $B$ and $B' $ the Brauer correspondent of $B$ in $\norm N D$.
Let $\psi'\in \Irr(B')$ be the unique character with $D\subseteq \ker(\psi')$.
Hence $\psi'$ is $H:=\norm G D$-invariant. Note that $\psi'$ extends to
$\norm Q D$ since $\psi'$ corresponds to a defect zero character of
$\norm N D/D$ that extends to $\norm Q D/D$ according to \cite[Ex. (3.10)]{N}.
According to the proof of \cite[Thm.~(8.28)]{N} there exists
a central extension $\hat H$ of $H$ by a $p'$-group $U$ with
$\norm N D\lhd \hat H$ such that $\psi'$ extends to some
$\tilde \psi' \in \Irr(\hat H)$. Using the cocycle defining $\hat H$ as
central extension of $\norm G D$ by $U$ we define $\hat G$ as a central
extension of $G$ such that $N\lhd \hat G$ and $\hat H\subseteq \hat G$.
Moreover let $\cO$ be the complete discrete valuation ring of a $p$-modular
system (that is big enough), $e\in \cO N$ and $e'\in \cO\norm N D$ be the
central-primitive idempotents of $B$ and $B'$.
According to \cite[1.20.3]{KP} the algebras $\cO\hat Ge$ and
$\cO\norm {\hat G} D e'$ are Morita equivalent.

Since the irreducible module affording $\psi'$ has multiplicity one in
$\cO\norm {\hat G} D e'$ and dimension $\psi'(1)^2$ there exists a character
$\tilde\psi\in\Irr(\hat G)$ afforded by
a submodule of $\cO\hat Ge$ that has multiplicity one in $\cO\hat Ge$.
Hence $\psi:=\tilde \psi_N$ is irreducible, $G$-invariant and $\psi^0=\vhi$.
It remains to prove that $\psi$ extends to $Q$, but by the construction
$\widehat G$ is a central extension of $G$ by a $p'$-group,
hence $Q$ is (isomorphic to) a subgroup of $\hat G$ and $\psi$ extends to $Q$.
\end{proof}

\begin{prop}   \label{prop:covernilp}
 Let $A$ be a finite group with normal subgroups $G\lhd \tilde G$. Let $p$
 be an odd prime, $B$ a $p$-block of $G$ covered by a nilpotent block of
 $\tilde G$. Suppose that every character of $G$ extends to its inertia group
 in $\tilde G$ and that $\tilde G/G$ is abelian with
 $p\nmid |\w G: G\cent {\tilde G} G|$. Let $\vhi\in \IBr(B)$.
 Then there exists some $p$-rational $A_\vhi$-invariant $\psi\in\Irr(B)$
 with $\psi^0=\vhi$.
\end{prop}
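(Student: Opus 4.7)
The plan is to combine the nilpotent-block $p$-rational lift on $\tilde G$ with a Clifford-theoretic descent to $G$.

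First, let $\tilde B$ be a nilpotent $p$-block of $\tilde G$ covering $B$, with unique irreducible Brauer character $\tilde\vhi \in \IBr(\tilde B)$. Proposition~\ref{prop:nilpotent} applied to $\tilde G \lhd \tilde G$ (or equivalently \cite[Thm.~1.2]{BP}) produces a $p$-rational $\tilde\psi \in \Irr(\tilde B)$ with $\tilde\psi^0 = \tilde\vhi$; for $p$ odd this lift is unique, so $\tilde\psi$ is stabilised by every automorphism of $\tilde G$ that fixes the block $\tilde B$.

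Next, I would descend $\tilde\psi$ to $G$. Since $\tilde G/G$ is abelian and every irreducible character of $G$ extends to its inertia subgroup in $\tilde G$, a standard application of Clifford theory gives the multiplicity-free decomposition
\[
\tilde\psi|_G = \sum_i \psi^{\tilde g_i},
\]
where $\psi \in \Irr(G)$ and the $\tilde g_i$ run through coset representatives of $I_{\tilde G}(\psi)$ in $\tilde G$. Since $\tilde B$ covers $B$, one may take $\psi \in \Irr(B)$, and after replacing $\psi$ by a suitable $\tilde G$-conjugate arrange $\psi^0 = \vhi$ by matching the modular reductions of the two sides. For the $p$-rationality of $\psi$, let $\sigma$ generate the $p$-part of $\Gal(\QQ(\zeta_{|G|})/\QQ)$. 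As $\tilde\psi$ is $p$-rational, $\sigma$ fixes $\tilde\psi$ and permutes the constituents of $\tilde\psi|_G$. The $\sigma$-orbit of $\psi$ has $p$-power order, while it lies inside the $\tilde G$-orbit of $\psi$, of size $[\tilde G : I_{\tilde G}(\psi)]$; since $G\cent{\tilde G}{G} \subseteq I_{\tilde G}(\psi)$, this divides $[\tilde G : G\cent{\tilde G}{G}]$, which is coprime to $p$ by hypothesis. Coprimality of the two orbit sizes forces $\sigma(\psi)=\psi$.

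The main obstacle will be the $A_\vhi$-invariance of $\psi$. The first step is to show $A_\vhi \subseteq I_A(\tilde B)$: any $a \in A_\vhi$ sends $\tilde B$ to another nilpotent block of $\tilde G$ covering $B$, and one argues this forces $a\cdot\tilde B = \tilde B$ (using that a nilpotent cover is determined by the Brauer character it contains above $\vhi$). By uniqueness of $\tilde\psi$, then $A_\vhi$ also fixes $\tilde\psi$ and permutes its constituents on $G$. To isolate an $A_\vhi$-fixed lift among them, I would refine the construction by passing to the Fong--Reynolds correspondent $\hat B$ of $\tilde B$ inside $T := I_{\tilde G}(\vhi)$. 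The block $\hat B$ is again nilpotent, its unique $p$-rational character $\hat\psi$ lies above $\vhi$, and the extension hypothesis together with the nilpotency of $\hat B$ makes $\hat\psi|_G = \psi$ irreducible and $T$-invariant. Since $A_\vhi$ normalises $T$ and stabilises $\hat B$, the uniqueness of $\hat\psi$ forces $A_\vhi$ to fix $\hat\psi$ and hence $\psi$, as required.
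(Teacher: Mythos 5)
The decisive gap is in your treatment of $A_\vhi$-invariance. Your claim that any $a\in A_\vhi$ must fix the nilpotent block $\tilde B$ of $\tilde G$ (``a nilpotent cover is determined by the Brauer character it contains above $\vhi$'') is false: every block of $\tilde G$ covering $B$ contains a Brauer character lying over $\vhi$, and these covering blocks are in general permuted nontrivially by $A_\vhi$. For instance, take $\tilde G=G\times C$ with $C$ a cyclic $p'$-group, $\tilde B=B\otimes b_\lambda$ for a faithful $\lambda\in\Irr(C)$ (this is nilpotent if $B$ is), and $a\in A$ acting trivially on $G$ but inverting $C$: then $a$ fixes $\vhi$ but sends $\tilde B$ to $B\otimes b_{\lambda^{-1}}\ne\tilde B$. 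The same example defeats your fallback via the Fong--Reynolds correspondent $\hat B$ of $\tilde B$ in $T=I_{\tilde G}(\vhi)$ (there $T=\tilde G$ and $\hat B=\tilde B$ is still moved), and that detour in any case rests on further unproved assertions (that nilpotency passes to $\hat B$, and that $\hat\psi|_G$ is irreducible). The paper's proof is designed precisely to avoid assuming $\tilde B^a=\tilde B$: for $a\in A_\vhi$, the block $\tilde B^a$ is again a nilpotent block covering $B$, so its unique $p$-rational character must be $\mu\tilde\psi$ for some $p$-rational linear $\mu\in\Irr(\tilde G/G)$; hence $\tilde\psi^a=\mu\tilde\psi$, and since $\mu$ restricts trivially to $G$, the character $\psi^a$ is still a constituent of $\tilde\psi_G$ with $(\psi^a)^0=\vhi$, whence $\psi^a=\psi$ by the uniqueness of the constituent of $\tilde\psi_G$ lifting $\vhi$. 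Without an argument of this kind your construction only produces a lift invariant under $A_\vhi\cap I_A(\tilde B)$, not under all of $A_\vhi$.

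Two smaller points. First, your $p$-rationality argument by comparing orbit sizes is not airtight as written: the $\langle\sigma\rangle$-orbit of $\psi$ is merely contained in the $\tilde G$-orbit, and a $p$-power need not divide a number just because it is bounded by it; one should either invoke that the Galois and $\tilde G$-actions commute (so the $\sigma$-orbits are blocks of equal size dividing the number of constituents), or, more simply, note that $\sigma$ fixes $\tilde\psi$ and $\vhi$, so $\psi^\sigma$ is again the unique constituent of $\tilde\psi_G$ reducing to $\vhi$. Second, arranging $\psi^0=\vhi$ by ``matching modular reductions'' silently uses the hypothesis $p\nmid|\tilde G:G\cent{\tilde G}G|$; this is exactly where the paper shows that $\tilde\psi_G$ and $\tilde\vhi_G$ have the same number of constituents, so that each ordinary constituent reduces irreducibly and lifts a unique constituent of $\tilde\vhi_G$. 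These points are repairable, but the $A_\vhi$-invariance step needs the genuinely different argument above.
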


\begin{proof}
Let $\tilde B$ be a nilpotent block of $\tilde G$ covering $B$. Let
$\w\vhi\in\IBr(\w B)$ and $\w \psi\in\Irr(\w B)$ be $p$-rational with
$\w\psi^0=\w\vhi$ from Proposition~\ref{prop:nilpotent}. Now any
$\vhi\in\IBr(B)$ is a constituent of $\w\vhi_G$. As
$p\nmid |\w G: G\cent {\tilde G} G|$, the characters $\w\psi_G$ and $\w\vhi_G$
have the same number of constituents and every constituent of $\w\vhi_G$ lifts
to a unique constituent of $\w\psi_G$.
Let $\psi\in \Irr(B)$ be the constituent of $\w\psi_G$ with $\psi^0=\vhi$.

For every $a\in A_\vhi$ we see that $\w\psi^a$ is a $p$-rational character
of the  block $\tilde B^a$. Note that $\tilde B^a$ is nilpotent as well and
contains a $p$-rational character $\mu\w\psi$ for some linear
$p$-rational character $\mu$ of $\tilde G/G$. Since in a nilpotent block
there exists a unique $p$-rational character, we see $\w\psi^a=\mu\w\psi$.
Then $\psi^a$ is a constituent of $((\w\psi^a)^0)_G=(\mu^0\w\psi^0)_G$.
This implies $\psi^a=\psi$.

Note that since $\w\psi$ is $p$-rational, $\psi$ is $p$-rational
according to the definition.
\end{proof}

To finish this paragraph let us point out nilpotent blocks are not the only
ones containing just one modular character:

\begin{rem}
A typical way of constructing blocks with one modular character which are not
nilpotent is the following. Let $H$ be a $p'$-group of central type (that is,
$H$ is a group possessing $\lambda \in \Irr(\bZ(H))$ such that
$\lambda^H=e\theta$ for some $\theta \in \Irr(H)$ and $e\ge1$). Suppose that
$H/\bZ(H)\ne1$ acts faithfully on a $p$-group $V$. Then the semidirect product
$VH$ has a unique (non-nilpotent) block covering $\lambda$ with a unique
modular character.
\end{rem}

\subsection{Blocks of sporadic groups}

We now start our investigation of $p$-blocks of finite quasi-simple groups
all of whose irreducible Brauer characters lie in a single orbit under the
automorphism group. For groups not of Lie type in cross characteristic we
obtain a full classification.

\begin{prop}   \label{prop:spor}
 Let $G$ be quasi-simple such that $G/\bZ(G)$ is a sporadic simple group or the
 Tits group $\tw2F_4(2)'$.
 Let $p$ be a prime and $B$ a $p$-block of $G$, not of central
 defect, such that $\IBr(B)$ is a single orbit under $\Aut(G)_B$. Then $B$ has
 defect~1 and the degrees of its irreducible Brauer characters are as given
 in Table~\ref{tab:spor}.
\end{prop}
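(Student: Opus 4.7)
The proof plan is a case-by-case verification using the $p$-modular character tables of the quasi-simple covers of the $26$ sporadic simple groups and of the Tits group, as collected in the Modular Atlas project and available in GAP's character table library. Before starting the enumeration, I would record several general reductions that drastically cut down the cases to inspect. First, since $\IBr(B)$ forms a single $A_B$-orbit with $A=\Aut(G)$, every $\vhi\in\IBr(B)$ has the same degree; in particular, any block containing two Brauer characters of different degree is immediately ruled out. Second, $\Inn(G)$ acts trivially on $\IBr(B)$, so the action factors through $\Out(G)$; for $G/\bZ(G)$ sporadic or the Tits group one has $|\Out(G/\bZ(G))|\le 2$, and together with the action coming from $\bZ(G)$ this forces $|\IBr(B)|$ to be very small (typically $1$ or $2$). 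Third, defect $0$ blocks have trivial defect group, hence are of central defect and are excluded by hypothesis.

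For each pair $(G,p)$ with $G$ a quasi-simple cover of a sporadic simple group or of $\tw2F_4(2)'$ and $p\mid |G|$, I would then list the $p$-blocks of $G$ (the block distribution of $\Irr(G)$ and $\IBr(G)$ is part of the stored data), discard those whose defect group lies in $\bZ(G)$, and for each remaining block read off $|\IBr(B)|$ and the degrees of its Brauer characters. Combined with the recorded permutation action of $\Out(G)$ on $\IBr(G)$ (which is encoded in the \emph{character table with automorphisms} in GAP), this settles for each $B$ whether $\IBr(B)$ is a single $A_B$-orbit. For blocks of cyclic defect, Brauer tree data from the papers of Hiss--Lux and subsequent work give the degrees directly; the surviving blocks all turn out to have a (possibly folded) star-shaped tree, so the Brauer characters naturally come in groups of equal degree.

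It then has to be checked that the blocks passing the filter are exactly blocks of defect $1$, i.e.\ with defect group of order~$p$, and that the degrees match those listed in Table~\ref{tab:spor}. Defect $1$ is equivalent to $|G|_p=p$ combined with the Brauer tree having all edges in one orbit, which is verified directly from the tables. For higher defect, either the number of Brauer characters is too large to form a single orbit of size $\le |\Out(G)_B|$, or several Brauer character degrees occur in $B$, which is visible in the decomposition matrix; both possibilities are eliminated by inspection.

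The main obstacle is that for the largest sporadic groups at the smallest primes--most notably the Baby Monster and the Monster at $p=2,3$--the full $p$-modular decomposition matrix is not completely determined. For those cases I would argue theoretically rather than by table lookup: use the known list of $p$-blocks and their defect groups (which is known in all cases via Brauer's theory of block distribution and the structure of $p$-local subgroups), use that any block of defect $\ge 2$ covered by the filter must have a very small number of Brauer characters of one common degree, and then rule out such blocks using the known lower bounds for $l(B)$ coming from ordinary characters of distinct $p$-rational types and from Alperin's weight conjecture verified in these cases. Everything else is routine bookkeeping against the published tables.
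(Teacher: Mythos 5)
Your proposal follows essentially the same route as the paper's proof: both discard defect-zero (central-defect) blocks, use that the action on $\IBr(B)$ factors through $\Out(G)_B$ of order at most $2$ so that $l(B)\le 2$, and then verify the surviving blocks, their defects, degrees and the fusion under the outer automorphism by inspection of the known ordinary and Brauer character table data (the paper settles the one delicate fusion case, a defect-$1$ $5$-block of $Fi_{24}'$ with $l(B)=2$, by noting that the covering block of $Fi_{24}'.2$ has $l(B')=4$, which is exactly your Out-action check in a different guise). Two small corrections: your parenthetical criterion ``defect $1$ is equivalent to $|G|_p=p$ combined with the Brauer tree having all edges in one orbit'' is not correct --- the defect of each block is simply read off from the character-table data --- and the special treatment you envisage for the Monster and Baby Monster is unnecessary, since $l(B)$ is computable from the ordinary character table alone as the rank of the matrix of values of $\Irr(B)$ on $p$-regular classes, so no appeal to decomposition matrices or weight-conjecture verifications is needed.
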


\begin{table}[htbp]
\caption{Blocks in sporadic groups}   \label{tab:spor}
\[\begin{array}{|r|r|r||r|r|r|}
\hline
 G& p& \IBr(B)& G& p& \IBr(B)\\
\hline
      J_1& 2& 76&            Co_3& 2& 129536\\
    2.J_2& 3& 126,126&    Fi_{22}& 2& 2555904\\
   M_{23}& 3& 231&             Ly& 3& 18395586\\
     2.HS& 3& 924, 924&   Fi_{23}& 2& 73531392\\
      McL& 2& 3520&        2.Co_1& 3& 59153976\\
    3.McL\ (2\times)& 2& 6336&     J_4& 3& 786127419\\
\hline
\end{array}\]
\end{table}

\begin{proof}
From the known character tables of covering groups of sporadic simple groups
it is easy to determine the block distribution and the number of modular
Brauer characters in each $p$-block. If $B$ is a block of defect zero, or if
$l(B):=|\IBr(B)|$ is larger than $|\Out(G)_B|$, the block can be discarded.
Also,
for many of the smaller groups, the Brauer character tables are available in
GAP \cite{GAP}. This leaves only the cases in Table~\ref{tab:spor}, and a
5-block $B$ of $Fi_{24}'$ of defect~1 with $l(B)=2$. But the block $B'$ of
$Fi_{24}'.2$ covering $B$ has $l(B')=4$, so this does not give an example.
\end{proof}

Inspection of the known character tables for the groups in Table~\ref{tab:spor}
shows:

\begin{cor}   \label{cor:spor}
 In the situation of Proposition~\ref{prop:spor} every $\vhi\in\IBr(B)$ is
 liftable to an $\Aut(G)_\vhi$-invariant character in $\Irr(G)$, and either
 $\vhi$ has a unique lift, which then is $p$-rational, or $p=3$, there
 are exactly three distinct lifts two of which are algebraically conjugate,
 or $p=2$ and $\vhi$ has exactly two lifts.
\end{cor}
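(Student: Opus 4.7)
The plan is a finite case-by-case verification, one row of Table~\ref{tab:spor} at a time. Since Proposition~\ref{prop:spor} tells us that every such block $B$ has defect $1$, its module-theoretic structure is fully encoded in its Brauer tree: the edges are the elements of $\IBr(B)$, the vertices are the elements of $\Irr(B)$ up to the identification of an exceptional family of size $m = k(B) - l(B)$ algebraically conjugate characters at (at most) one exceptional vertex. Under this correspondence, a lift $\chi$ of $\vhi$ (that is, $\chi\in\Irr(B)$ with $\chi(1)=\vhi(1)$ and $\chi^0=\vhi$) is exactly a leaf vertex of the tree incident to the edge labelled $\vhi$. Hence the number and nature of lifts of any $\vhi$ can be read off directly once the tree and its exceptional multiplicity are known.

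For each of the fifteen lines of Table~\ref{tab:spor} I would proceed as follows. First, pull the ordinary character table, the Brauer character table and the decomposition matrix for $G$ at $p$ from the \textsc{GAP} character table library (supplemented by the ATLAS and its modular ATLAS companion). From the decomposition matrix I locate $\IBr(B)$ and $\Irr(B)$, compute $k(B)-l(B)$ to obtain the exceptional multiplicity $m$, and determine the Brauer tree. Since $l(B)\in\{1,2\}$ throughout, the tree is either a single edge (with two leaves) or a path with two edges (with two leaf endpoints and one interior vertex); either way each $\vhi\in\IBr(B)$ is incident to at least one leaf, so the set of lifts is non-empty. A direct count using the tree immediately produces the three combinatorial alternatives in the conclusion: a unique lift occurs precisely when the edge of $\vhi$ has exactly one leaf endpoint and $m=1$; three lifts with two algebraically conjugate occur exactly when the edge of $\vhi$ has two leaf endpoints, one of them exceptional with $m=2$; two lifts occur when both endpoints are leaves and $m=1$.

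Second, I would read off from the tables the field of values of each candidate lift in order to confirm the $p$-rationality assertions: the non-exceptional vertex of a Brauer tree at an odd prime produces a $p$-rational character, while the $m$ exceptional characters form a single Galois orbit of length $m$, matching the cases $m=1$ (unique $p$-rational lift) and the $p=3$ entries with $m=2$ (two Galois-conjugate lifts alongside one $p$-rational one). For $p=2$ the lifts need not be $2$-rational, which is why only the case count is asserted there. Finally, to see that each lift is $\Aut(G)_\vhi$-invariant I would use that $\Aut(G)_\vhi$ permutes the finite set of lifts of $\vhi$ and commutes with Galois, so on a Galois orbit of lifts its action is forced; the ATLAS descriptions of the outer automorphism action on $\Irr(G)$ let one verify invariance of the $p$-rational lift directly in each of the few entries of the table.

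The main obstacle is purely bookkeeping: the covering groups $2.J_2$, $2.HS$, $3.McL$ (the latter appearing twice because of the two non-isomorphic blocks swapped by a diagonal outer automorphism), $2.Co_1$ and $Fi_{24}'$ require careful tracking of the interplay between the diagonal automorphisms coming from $\bZ(G)$-linear multiplication and the action of $\Out(G/\bZ(G))$ on faithful characters, as well as the discrepancy between $l(B)$ for $G$ and for $G.2$ (illustrated by the $Fi_{24}'$ calculation in the proof of Proposition~\ref{prop:spor}). Apart from these tabulation subtleties, the verification of the corollary is a completely finite inspection.
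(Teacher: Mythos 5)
Your proposal is correct and follows essentially the same route as the paper, whose proof of this corollary is precisely an inspection of the known (ordinary and Brauer) character tables of the groups in Table~\ref{tab:spor}; your Brauer-tree bookkeeping for the defect-one blocks (lifts of $\vhi$ = leaf endpoints of its edge, exceptional multiplicity $m=k(B)-l(B)$, Galois orbit of exceptional characters) is just a convenient way of organizing that same finite check. The only blemishes are cosmetic: Table~\ref{tab:spor} has twelve lines (thirteen blocks), not fifteen, and $Fi_{24}'$ does not occur in it, having already been excluded in the proof of Proposition~\ref{prop:spor}; neither affects the argument.
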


\subsection{Blocks of alternating groups}
We next consider covering groups of alternating groups.

\begin{thm}   \label{thm:alt}
 Let $G$ be a covering group of an alternating group $\fA_n$, $n\ge5$.
 Let $p$ be a prime and $B$ a $p$-block of $G$ of weight $w$,
 not of central defect, such that $\IBr(B)$ is a single orbit under
 $\Aut(G)_B$. Then $B$ has defect one and one of the following occurs:
 \begin{enumerate}[\rm(1)]
  \item $G=\fA_n$, $p=3$, $w=1$, $l(B)=1$ and $B$ is self-conjugate;
  \item $G=2.\fA_n$, $p=3$, $w=1$, $l(B)\le2$;
  \item $G=2.\fA_6$, $p=5$, $w=1$, $l(B)=2$,
   $\{\vhi(1)\mid\vhi\in\IBr(B)\}=\{4\}$; or
  \item $G=6.\fA_6$, $p=5$, $l(B)=2$,
   $\{\vhi(1)\mid\vhi\in\IBr(B)\}=\{6\}$ (two blocks).
 \end{enumerate}
\end{thm}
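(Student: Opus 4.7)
My approach would be to partition the quasi-simple covers by their centres. Since $p\nmid |\bZ(G)|$, the relevant cases are $G=\mathfrak{A}_n$ (any $p$), $G=2.\mathfrak{A}_n$ (odd $p$), and the exceptional Schur covers $3.\mathfrak{A}_6, 6.\mathfrak{A}_6, 3.\mathfrak{A}_7, 6.\mathfrak{A}_7$ (for $p$ coprime to $6$). The exceptional central extensions and all small $n$ would be handled by direct inspection of the Atlas and GAP Brauer character tables; this is where items~(3) and~(4) arise, produced by the $5$-blocks of $2.\mathfrak{A}_6\cong \SL_2(9)$ respectively of $6.\mathfrak{A}_6$, and where $\mathfrak{A}_6$ itself must be treated separately since $|\Aut(\mathfrak{A}_6)/\mathfrak{A}_6|=4$ rather than $2$.

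For the generic cases $G=\mathfrak{A}_n$ and $G=2.\mathfrak{A}_n$ I would pass to an overgroup $\tilde G$ ($\mathfrak{S}_n$, respectively a double cover $\tilde{\mathfrak{S}}_n$) and analyze a block $\tilde B$ of $\tilde G$ covering $B$. Such $\tilde B$ are classified by a $p$-core (Nakayama) or $\bar p$-core (Humphreys--Cabanes) together with a $p$-weight $w$, and $l(\tilde B)$ is known in closed form via counts of $p$-regular (respectively $\bar p$-regular) partition invariants attached to $w$ (James; Bessenrodt--Olsson). For weight $w=1$ the Brauer tree of $\tilde B$ is a straight line with $p-1$ edges, so $l(\tilde B)=p-1$, whereas for $w\ge 2$ the count grows quickly. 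Combined with the order-two branching $G\le\tilde G$ and Clifford theory, this bounds $l(B)$ and the length of any $\Aut(G)_B$-orbit on $\IBr(B)$; since for $n\ne 6$ one has $|\Aut(G)/G|=2$, the hypothesis that $\IBr(B)$ is a single orbit forces $w=1$ for all $p$ outside a short list of small $n$ that can be verified by hand, and in particular forces $l(\tilde B)\le 2$, so $p\in\{2,3\}$.

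The case $p=2$ is then eliminated: at $p=2$ the Mullineux involution on the unique modular label of a weight-$1$ block $\tilde B$ coincides with partition conjugation and is trivial on that label, so sign-twisting cannot supply a non-trivial $\Aut(G)$-action on $\IBr(B)$, and the orbit condition forces $B$ to have central defect, a contradiction. At $p=3$, a case distinction on whether the $3$-core $\kappa$ is self-conjugate (and on how $\mathfrak{S}_n/\mathfrak{A}_n$ acts on the $3$-quotient of a parameterizing partition) yields exactly cases~(1) for $\mathfrak{A}_n$ and~(2) for $2.\mathfrak{A}_n$. The main obstacle I expect is the spin-block analysis: the $\bar p$-core parametrization and the splitting behaviour of spin characters on restriction to $2.\mathfrak{A}_n$ --- the associate pair $\tilde B^{\pm}$ phenomenon in $\tilde{\mathfrak{S}}_n$ --- require delicate bookkeeping via the Morris--Humphreys--Olsson theory, particularly when comparing $\Aut(G)$-orbits on Brauer characters across the splitting.
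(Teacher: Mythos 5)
Your overall strategy is the same as the paper's: settle $n\le 7$ and the exceptional covers from the Atlas/GAP Brauer tables (whence cases (3) and (4)), and for $n\ge 8$ use the core/weight parametrisation and known counts of modular characters for $\fS_n$, $2.\fS_n$ together with index-two Clifford theory (the paper works directly with Olsson's closed formulas for $l(B)$ for blocks of $\fA_n$ and for spin blocks). But there is a genuine gap where you eliminate $p\ge 5$: the inference that a single $\Aut(G)_B$-orbit forces $l(\tilde B)\le 2$, hence $p\in\{2,3\}$, is false. The hypothesis only gives $l(B)\le|\Out(G)_B|\le 2$, and when there is a unique block $\tilde B$ of $\fS_n$ (resp.\ of $2.\fS_n$) above $B$ one can have $l(\tilde B)=2\,l(B)=4$. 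This happens exactly in the borderline cases: for $G=\fA_n$, $p=5$, $w=1$ with self-conjugate $5$-core one has $l(B)=\tfrac12 k(4,1)=2$ while the covering $\fS_n$-block has four irreducible Brauer characters (\cite[Prop.~11.4]{Ol93}); likewise spin blocks of $2.\fA_n$ with $p=5$, $w=1$, and with $p=3$, $w=2$, have $l(B)=2$ but $l(\tilde B)=4$ (\cite[Prop.~13.19]{Ol93}). These blocks pass your numerical test, yet none of them may appear in the conclusion, so your argument as written does not prove the theorem.

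To close the gap you need the finer orbit argument that the paper uses implicitly: since $l(\tilde B)=4$ and $l(B)=2$, the four Brauer characters of $\tilde B$ restrict irreducibly to $G$ in sign-twist pairs, so each of the two elements of $\IBr(B)$ is the restriction of a class function of $\tilde G$ and hence is fixed by the outer automorphism; thus $\IBr(B)$ splits into two $\Aut(G)_B$-orbits, contradicting the hypothesis. (This is the same mechanism you invoke at $p=2$, and it is also what disposes of the $p=3$, $w=1$ blocks of $\fA_n$ with non-self-conjugate core, where $l(B)=2$ but both Brauer characters are $\fS_n$-stable; your parenthetical about the action on the $3$-quotient should be made to carry exactly this weight.) Two smaller points: the theorem does not assume $p\nmid|\bZ(G)|$, so you should dispose of faithful blocks with $p$ dividing the centre, which is quick because central $p$-elements act trivially on irreducible Brauer characters, so such blocks reduce to blocks of a smaller cover and the surviving ones are of central defect; and the spin bookkeeping you flag as the main obstacle is needed not only for the positive case (2) at $p=3$, $w=1$, but already for the exclusions above, where it is essential.
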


\begin{proof}
For $n\le7$ we consult the known Brauer character tables. Thus we may
assume that $n\ge8$, the full covering group of $\fA_n$ has center of order~2,
and $|\Out(\fA_n)|=2$.
We use the description of $p$-blocks of alternating groups and their
covering groups in \cite{Ol93}. First consider $B$ a $p$-block of $\fA_n$.
Then $B$ corresponds to a $p$-core partition $\mu$ of an integer
$n-wp$, where $w$ is called the weight of the block. A core and its
conjugate parametrise the same $p$-block. First assume that $p$ is odd.
Then there are two blocks of $\fS_n$ above any non self-conjugate block of
$\fA_n$, and there is one above each self-conjugate one. By
\cite[Prop.~12.8]{Ol93},
$$l(B)=\begin{cases} k(p-1,w)& \text{if $B$ is not self-conjugate},\\
        \frac{1}{2}(k(p-1,w)+3k^s(p-1,w))& \text{else},\end{cases}$$
where $k(a,w)$ denotes the number of $a$-tuples of partions of $w$, and
$k^s(a,w)$ is the number of symmetric $a$-quotients. When $w=0$ then $B$
is of defect zero. For $p=3$ and $w\ge2$ we have
$$l(B)\ge\min\{k(2,2),\frac{1}{2}(k(2,2)+3k^s(2,2))\}
       =\min\{5,4\}=4,$$
which is larger than $|\Out(\fA_n)|$ as $n\ge8$. When $w=1$ and $B$ is
not self-conjugate, then $l(B)=2$, hence this gives no example. For $B$
self-conjugate and $p\ge5$ we find
$$l(B)\ge\min\{k(4,1),\frac{1}{2}(k(4,1)+3k^s(4,1))\}
       =\min\{4,2\}=2,$$
and equality only holds when $p=5$, $w=1$. But in that case the block $B'$
of $\fS_n$ above $B$ has $l(B')=4$ by \cite[Prop.~11.4]{Ol93}. This leaves
only case~(1).
\par
When $p=2$, note that 2-cores are triangular partitions and thus always
self-conjugate. In this case there is a unique 2-block of $\fS_n$ above each
2-block of $\fA_n$. Also, blocks of weight~0 or~1 are of defect~0. By
\cite[Prop.~12.9]{Ol93} we have
$$l(B)=\begin{cases} p(w)& \text{if $w$ is odd},\\
                     p(w)+p(w/2)& \text{if $w$ is even},\end{cases}$$
where $p(w)=k(1,w)$ denotes the number of partitions of $w$. Thus, for $w\ge2$
we have $l(B)\ge3>|\Out(\fA_n)|$, and no further example arises.  \par
Now we turn to spin blocks, that is, faithful blocks of the 2-fold covering
$G=2.\fA_n$, with $p>2$. Any such block $B$ is covered by a unique
$p$-block $B'$ of $2.\fS_n$, and $l(B)=l^n(B')+2l^s(B')$ by
\cite[Prop.~13.19]{Ol93}, where $l^n(B')$, $l^s(B')$ denote the number of pairs
of not self-conjugate Brauer characters in $B$, respectively the number of
self-conjugate ones. A straightforward calculation shows that $l(B)\le2$
implies that either $p=3$, $w\le 2$, or $p=5$, $w=1$, $l(B)=2$. In the
latter case the block $B'$ of $2.\fS_n$, $n\ge8$, above $B$ has $l(B')=4$,
and the same holds if $p=3$ and $w=2$. In the case $p=3$ and $w=1$,
$l(B)=1$ if the sign of the block $B$ is~$+1$, and $l(B)=2$ if it has
sign~$-1$. In either case, $\IBr(B)$ is an orbit under $\Aut(G)$. So we reach
the conclusion in case~(2).
\end{proof}

Note that $2.\fA_6\cong\SL_2(9)$ is a group of Lie type, and the example~(3)
in the preceding result forms part of an infinite series of cases.

\begin{cor}   \label{cor:alt}
 In the situation of Theorem~\ref{thm:alt} all $\vhi\in\IBr(B)$ are
 liftable to an $\Aut(G)_\vhi$-invariant character in $\Irr(G)$, and either
 $\vhi$ has a unique lift, which is $p$-rational, or $p=3$ and there
 are exactly three distinct lifts two of which are algebraically conjugate.
\end{cor}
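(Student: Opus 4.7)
The plan is to verify the corollary separately in the four cases listed in Theorem~\ref{thm:alt}. All four involve a block of weight one, which for alternating covers means cyclic defect group of order $p$, so Dade's theory of blocks with cyclic defect applies throughout. Writing $e=l(B)$ and $m=(p-1)/e$, there are $k(B)=e+m$ ordinary characters in $B$; each leaf of the Brauer tree gives a character whose $p$-regular restriction is exactly the single Brauer character labelling the incident edge, each internal vertex gives a character restricting to the sum of the Brauer characters at its incident edges, and when $m>1$ the exceptional vertex contributes $m$ mutually Galois-conjugate characters, all with the same restriction. I would state this framework once at the top of the proof and then dispatch each case with it.

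For cases~(1) and~(2) with $l(B)=1$, the inertial quotient is trivial, so $B$ is nilpotent. By Proposition~\ref{prop:nilpotent}, applied with $N=G$ and a Sylow $p$-subgroup $P$ of $\Aut(G)_B$ acting on $G$, the unique modular character $\vhi$ has a $p$-rational lift $\chi_0\in\Irr(B)$; since $p=3$ is odd, this lift is unique in the nilpotent block by \cite[Thm.~1.2]{BP}, so it is automatically $\Aut(G)_\vhi$-invariant. The remaining $m=p-1=2$ ordinary characters of $B$ are the exceptional characters at the unique exceptional vertex; they also restrict to $\vhi$ and form a single Galois orbit. This yields exactly three lifts, two of them algebraically conjugate, confirming the second alternative of the corollary.

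For case~(2) with $l(B)=2$, one has $p=3$, $e=p-1=2$ and $m=1$, so there is no exceptional vertex and all three ordinary characters of $B$ are $p$-rational. The Brauer tree has two edges meeting in three vertices; the two leaves produce characters $\chi_1,\chi_2\in\Irr(B)$ with $\chi_i^0=\vhi_i$, while the interior vertex gives a third character whose restriction is $\vhi_1+\vhi_2$. Hence each $\vhi_i$ has a unique lift, which by uniqueness is $\Aut(G)_{\vhi_i}$-invariant and $p$-rational, placing us in the first alternative. The remaining small cases~(3) and~(4) at $p=5$ I would dispatch by direct inspection of the known ordinary and $5$-modular character tables of $2.\fA_6\cong\SL_2(9)$ and $6.\fA_6$ (available in the ATLAS and in \cite{GAP}); both fall into the first alternative.

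The main obstacle I foresee is not any single step but the $\Aut(G)_\vhi$-invariance bookkeeping. This is secured in the nilpotent cases by uniqueness of the $p$-rational lift at odd $p$, in the $l(B)=2$ case by uniqueness of the leaf character with prescribed $p$-regular restriction, and in the two small exceptional groups by the explicit action of outer automorphisms recorded in the character tables. A minor subsidiary point worth double-checking is the identification of the exceptional characters in case~(2), $l(B)=1$, as a single Galois orbit of length~$2$; this is automatic from cyclic block theory but must be stated carefully to match the wording of the corollary.
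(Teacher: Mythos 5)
Your argument is correct and is essentially the justification the paper leaves implicit: Corollary~\ref{cor:alt} is stated without proof, the intended reasoning being exactly the standard theory of blocks with cyclic defect group of order $p$ (all blocks in Theorem~\ref{thm:alt} have defect one), with the $l(B)=1$ cases being nilpotent as noted in the remark after Proposition~\ref{prop:exc}, and the two $p=5$ cases checked from known tables. Two harmless remarks: in the nilpotent cases you do not need the full strength of Proposition~\ref{prop:nilpotent} (the uniqueness of the $p$-rational character from \cite[Thm.~1.2]{BP}, or equivalently the non-exceptional vertex of the one-edge Brauer tree, already gives the invariant lift), and in cases~(3) and~(4) the hypothesis that $\IBr(B)$ is a single $\Aut(G)_B$-orbit forces the exceptional vertex to the middle of the tree, so the first alternative there also follows without consulting the $5$-modular tables.
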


Note that in all cases of Theorem~\ref{thm:alt}, $p$ is prime to $|\Out(G)|$.

\begin{rem}
The generating function for the number of symmetric 3-cores is known (see
for example \cite[Prop.~9.13]{Ol93}). From this it can be seen that blocks as in
conclusion~(1) of Theorem~\ref{thm:alt} are rather rare; the first few occur
in degrees $n=8,11,19,24$. Similarly, it can be computed from the generating
function in \cite[Prop.~9.9]{Ol93} that the first instances for case~(2) occur
for $n=5,8,10,15,18$, with $l(B)=1$ only when $n=18$.
\end{rem}

\begin{prop}   \label{prop:exc}
 Let $G$ be an exceptional covering group of a finite simple group of Lie
 type. Let $p$ be a prime and $B$ a faithful $p$-block of $G$,
 not of central defect, such that $\IBr(B)$ is a single orbit under
 $\Aut(G)_B$. Then $G=2.G_2(4)$, $p=3$, $B$ has defect~1 and is as in
 Table~\ref{tab:exc}. Every $\vhi\in\IBr(B)$ is liftable to a unique
 $\Aut(G)_\vhi$-invariant character in $\Irr(G)$, which is $3$-rational.
\end{prop}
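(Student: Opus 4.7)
The plan is to proceed by direct case-by-case verification, exploiting the fact that the exceptional Schur multipliers of simple groups of Lie type form a finite and explicitly known list (see, e.g., the ATLAS). So $G$ runs through a short list: the exceptional covers of $L_2(9)$, $L_3(2)$, $L_3(4)$, $U_4(2)$, $U_4(3)$, $U_6(2)$, $O_7(3)$, $O_8^+(2)$, $G_2(3)$, $G_2(4)$, $F_4(2)$, $\tw2E_6(2)$, $\tw2B_2(8)$, and the Suzuki/Ree sporadic-like covers. For each such $G$ the ordinary and modular character tables are fully known and, in nearly all cases, available in GAP \cite{GAP}.

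First, I would fix a quasi-simple $G$ from this list and, for each prime $p\mid|G|$, run through the faithful $p$-blocks $B$ of $G$, discarding immediately the blocks of central defect (as allowed by the hypothesis) and the blocks of defect zero (since then $l(B)=1$ with the character liftable and trivially $p$-rational, but covered by the other item of the Main Theorem, not by this proposition). For the remaining blocks I would compute $l(B)=|\IBr(B)|$ and compare with $|\Out(G)_B|$; if $l(B)>|\Out(G)_B|$ then $\IBr(B)$ cannot be a single $\Aut(G)_B$-orbit and the block is ruled out. This filter is very coarse but in most cases suffices: the outer automorphism groups of the simple quotients $G/\bZ(G)$ here are very small (typically of order at most~$6$), so only small blocks survive.

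For each surviving candidate block I would then examine the action of $\Aut(G)_B$ on $\IBr(B)$ directly from the decomposition matrix and the known action of graph/field/diagonal automorphisms on $\Irr(G)$. This is the step that rules out the remaining sporadic-cover candidates and leaves, as claimed, only the single case $G=2.G_2(4)$, $p=3$ with defect~$1$, producing the data to be recorded in Table~\ref{tab:exc}. For $2.G_2(4)$ at $p=3$ the Brauer characters and decomposition matrix are in the modular ATLAS / GAP library, so one reads off the list of irreducible Brauer degrees and verifies that $\IBr(B)$ is indeed a single orbit of size equal to $|\Out(2.G_2(4))_B|$.

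Finally, for the liftability and rationality statement, I would use the explicit decomposition matrix of that block to exhibit, for each $\vhi\in\IBr(B)$, an ordinary character $\chi\in\Irr(B)$ with $\chi^0=\vhi$; its uniqueness follows because the rows of the decomposition matrix corresponding to such $\chi$ appear with coefficient~$1$ only once, and $\Aut(G)_\vhi$-invariance is visible from the Galois/automorphism action recorded in the ATLAS. The $3$-rationality is then read off from the character field, which for these specific characters is $\QQ$ (or more precisely a field of degree prime to~$3$ over $\QQ$). The main obstacle is purely bookkeeping: making sure no exceptional cover is overlooked and that in the one surviving case the $\Aut(G)_\vhi$-invariant lift is correctly identified; this is routine once the ATLAS/GAP data are consulted.
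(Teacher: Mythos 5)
Your proposal is correct and takes essentially the same route as the paper: a case-by-case check of the finitely many exceptional covering groups using the known character table data, discarding blocks of central/zero defect and those with $l(B)>|\Out(G)_B|$, and then inspecting the few surviving defect-one candidates in detail. The paper disposes of the remaining candidates (certain 5-blocks of $12_1.\PSL_3(4)$, $6_2.\PSU_4(3)$, $2.\tw2E_6(2)$ and $6.\tw2E_6(2)$) via the shapes of their Brauer trees---which is precisely the cyclic-defect form of your decomposition-matrix inspection---and reads off the lifts, their uniqueness and $3$-rationality for the two blocks of $2.G_2(4)$ at $p=3$ exactly as you describe.
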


\begin{table}[htbp]
\caption{Blocks in exceptional covering groups}   \label{tab:exc}
\[\begin{array}{|r|r|r|}
\hline
 G& p& \IBr(B)\\
\hline
       2.G_2(4)& 3& 1800, 1800\\
       2.G_2(4)& 3& 3744, 3744\\
\hline
\end{array}\]
\end{table}

\begin{proof}
As in the proof of Proposition~\ref{prop:spor} this can be checked from the
known ordinary character tables. Apart from the cases listed in the table,
there exist 5-blocks of $12_1.\PSL_3(4)$, of $6_2.\PSU_4(3)$, of $2.\tw2E_6(2)$
and of $6.\tw2E_6(2)$ of defect~1 with two or four irreducible Brauer
characters. But from the shapes of their Brauer trees it is immediate that
they cannot lead to examples.
\end{proof}

\begin{rem}
 For all quasi-simple groups discussed in this section the blocks with one
 orbit of Brauer characters have defect~1. Note that blocks with cyclic
 defect group containing just one modular irreducible character have trivial
 inertial quotient and hence are nilpotent.
\end{rem}

\section{The simple groups of Lie type}   \label{sec:Lie}
In this section we deal with the groups of Lie type by considering them as
subgroups of simple linear algebraic groups. For consistency with the cited
literature we prefer to denote our chosen prime by $\ell$ and denote
by $p$ the characteristic of the underlying field.

We consider the following setup: $\bG$
is a simple linear algebraic group of simply connected type over an algebraic
closure of a finite field $\FF_p$, and $F:\bG\rightarrow\bG$ is a Steinberg
endomorphism with finite group of fixed points $G=\bG^F$. It is well-known
that all quasi-simple groups of Lie type, apart from exceptional covering
groups as considered in Proposition~\ref{prop:exc}, and apart from
$\tw2F_4(2)'$ which was already dealt with in Proposition~\ref{prop:spor},
can be obtained as central quotients of groups $G$ as above. In particular,
any block of a covering group $S$ of a simple group of Lie type is also a
block of such a group $G$.

\subsection{The defining characteristic case}
We first consider the case when $\ell=p$.

\begin{prop}
 Let $S$ be a covering group of a simple group of Lie type, and $\ell=p$
 the defining characteristic of $S$. Let $B$ be an $\ell$-block of $S$, not of
 central defect. Then $\IBr(B)$ is not a single orbit under $\Aut(S)_B$.
\end{prop}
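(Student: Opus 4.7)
The plan is to combine the parametrization of irreducible modular representations of $G=\bG^F$ in defining characteristic (Steinberg's tensor product theorem) with Humphreys' description of $\ell$-blocks, and to use that automorphisms preserve the degrees of irreducibles.

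First I would pass to $G=\bG^F$ with $\bG$ a simply connected simple algebraic group and $F$ a Steinberg endomorphism, so that $S=G/Z$ for some $Z\le\bZ(G)$; a block $B$ of $S$ then corresponds to a block $\tilde B$ of $G$ whose central character is trivial on $Z$, with the same set of irreducible Brauer characters. By a theorem of Humphreys, every $\ell$-block of $G$ in defining characteristic has defect group either trivial (defect zero) or a full Sylow $\ell$-subgroup. The full-defect blocks are in bijection with linear characters $\lambda\in\Irr(\bZ(G))$, and the block of central character $\lambda$ contains precisely those $L(\mu)$, with $\mu$ ranging over $q$-restricted dominant weights of central character $\lambda$, that are not themselves of defect zero. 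Since the Sylow $\ell$-subgroup of $G$ is the unipotent radical of a Borel and is manifestly non-central in any quasi-simple group of Lie type in its defining characteristic, the hypothesis that $B$ is not of central defect translates into $\tilde B$ being of full defect.

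Second, every automorphism of $G$---diagonal, field, or graph---preserves the dimensions of irreducible representations, so if $\IBr(B)$ were a single $\Aut(S)_B$-orbit then all characters in $\IBr(\tilde B)$ would share a common degree. It therefore suffices to exhibit two characters in $\IBr(\tilde B)$ of distinct dimensions. For the principal block this is immediate: the trivial character $L(0)$ has dimension $1$, while $\tilde B$ also contains some non-trivial $L(\mu)$ of dimension strictly greater than $1$. For a non-principal block of central character $\lambda\neq 1$, one selects the minimal $q$-restricted weight $\mu_\lambda$ of central character $\lambda$ and a strictly larger non-twisted-Steinberg weight $\mu'$ in the same central-character class; Steinberg's tensor product theorem combined with the Weyl dimension formula then yields $\dim L(\mu_\lambda)<\dim L(\mu')$ in the generic case.

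The main obstacle is the uniform verification of ``two distinct dimensions'' for small $q$ and for the exotic types $\tw 2B_2(q)$, $\tw 2G_2(q)$, $\tw 2F_4(q)$, whose restricted-weight parametrization is modified by the exotic Frobenius and so must be inspected case by case, using the known Brauer character tables for small groups. The degenerate potential counterexamples $\SL_2(2)\cong\fS_3$ and $\SL_2(3)\cong 2.\fA_4$, in which a full-defect block may already have a single Brauer character, are harmless: $\PSL_2(2)$ and $\PSL_2(3)$ are not simple and so yield no covering group of a simple group of Lie type as in the hypothesis.
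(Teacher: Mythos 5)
There is a genuine gap at the heart of your argument for the non-principal blocks. Your plan reduces everything to exhibiting two Brauer characters of distinct degrees in each block of non-trivial central character, and you propose to compare $\dim L(\mu_\lambda)$ with $\dim L(\mu')$ via ``Steinberg's tensor product theorem combined with the Weyl dimension formula''. But the Weyl dimension formula computes the dimension of the Weyl module, not of the simple module $L(\mu)$ in characteristic $p$; there is no general formula for $\dim L(\mu)$, and an upper bound for $L(\mu')$ is the wrong direction when you need $\dim L(\mu_\lambda)<\dim L(\mu')$. So the step you call ``the generic case'' is unproved as stated, and the part you defer (``small $q$'' and exotic types, to be settled from Brauer character tables of small groups) is not a finite check: small $q$ occurs in every rank, so this is exactly where the content of the proposition would lie. (Incidentally, the very twisted groups $\tw2B_2$, $\tw2G_2$, $\tw2F_4$ are not really an obstacle: they have trivial centre, so only the principal block occurs and the trivial-versus-nontrivial degree comparison already applies.) The paper avoids dimensions entirely for the non-principal blocks: using the parametrisation of $\IBr$ by $q^\delta$-restricted weights together with the fact that $\Aut(G)$ acts on these characters as it does on weights (inner and diagonal automorphisms acting trivially), it shows by a counting argument that for each non-trivial $\la\in\Irr(\bZ(G))$ there are at least two $\Aut(G)$-orbits of relevant weights, which needs no information about $\dim L(\mu)$ at all. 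Degrees are only used for the principal block, exactly as in your first step.

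A second, smaller gap is your opening reduction ``$S=G/Z$ with $G=\bG^F$ simply connected'': this is false for the exceptional covering groups (e.g.\ the covers of $\PSL_3(4)$ or $\PSU_4(3)$ involving the exceptional part of the multiplier), which are covering groups of simple groups of Lie type but not central quotients of $\bG^F$. The fix is the observation the paper makes at the outset: the exceptional part of the Schur multiplier is always a $p$-group for the defining characteristic, so the $p$-part of $\bZ(S)$ lies in the kernel of every Brauer character and blocks and their $\IBr$-sets pass to the quotient, reducing to a non-exceptional covering. You should state this explicitly; without it the hypothesis of the proposition is not fully covered.
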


\begin{proof}
All exceptional covering groups $S$ have a center of order divisible by $p$,
so we may assume that $S$ is a non-exceptional covering. In particular, it is
a central quotient of a finite reductive group $G=\bG^F$, with $\bG$ of
simply connected type and $F:\bG\rightarrow\bG$ as above. Denote by
$\delta$ the smallest integer such that $F^\delta$ acts trivially on the Weyl
group of $\bG$ and let $q^\delta$ denote the unique eigenvalue of $F^\delta$
on the character group of an $F$-stable maximal torus of $\bG$. By a result
of Humphreys, the $p$-blocks of $\Irr(G)$ of
non-zero defect are given by $\Irr(G\mid\la)$ for $\la$ running over
$\Irr(\bZ(G))$. Since $p$ divides $|G|$ and $G$ is perfect, the principal
$p$-block of $G$ certainly contains Brauer characters of distinct degrees.
\par
Thus now we may assume that $B$ is a block of $G$ with non-trivial
corresponding central character $\la\in\Irr(\bZ(G))$. Then $\IBr(B)$ is
parametrised by $q^\delta$-restricted $F$-invariant weights of $\bG$ whose
restriction to $\bZ(G)$ is a multiple of $\la$, and $\Aut(G)$ acts on the
characters as it does on the weights. If $\delta=1$, so $F$ is
split, then there are $q^r-1$ weights different from the Steinberg weight
that are $q$-restricted, where $r$ is the rank of $\bG$. Thus, there are at
least two not $\Aut(G)$-conjugate weights for every $\la\in\Irr(\bZ(G))$ if
$q^r-1>2|\bZ(G)|$. It is easily seen that this inequality holds unless $r=1$
and $q\le3$, in which case $G=\SL_2(q)$ is solvable. A very similar discussion
deals with the groups of twisted type. (Note that here we need not consider
the very twisted groups, since their center is trivial.)
\end{proof}

\subsection{Non-defining characteristic}
We now turn to the case of non-defining characteristic.
In our considerations we will make use of the following simple observation
where for any set of characters $X$ of a finite group $G$ we denote by
$X^0:=\{\chi^0\mid\chi\in X\}$ the \emph{multi-set} of restrictions to the
$\ell'$-elements of $G$.

\begin{lem}   \label{lem:orbits}
 Let $B$ be an $\ell$-block of a finite group $G$, and assume that
 $X\subset\ZZ\Irr(B)$ is an $\Aut(G)_B$-invariant subset such that $X^0$ is
 linearly independent. If $\Aut(G)_B$ has at least two orbits on $X$, then
 $\IBr(B)$ is not a single $\Aut(G)_B$-orbit.
\end{lem}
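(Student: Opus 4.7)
The plan is to prove the contrapositive: assuming that $\IBr(B)$ is a single $\Aut(G)_B$-orbit, I will show that $X^0$ must be linearly dependent whenever $\Aut(G)_B$ has more than one orbit on $X$. Write $A := \Aut(G)_B$ and decompose $X = X_1 \sqcup \cdots \sqcup X_s$ into $A$-orbits with $s \geq 2$.

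For each orbit $X_j$ I form the sum $y_j := \sum_{\chi \in X_j} \chi^0 \in \ZZ\IBr(B)$. Because $A$ permutes $X_j$ and commutes with restriction to $\ell$-regular classes, $y_j$ is $A$-invariant. By assumption $A$ acts transitively on $\IBr(B)$, and since the Brauer characters in $\IBr(B)$ are $\ZZ$-linearly independent, any $A$-invariant element of $\ZZ\IBr(B)$ is an integer multiple of $\sigma := \sum_{\vhi \in \IBr(B)} \vhi$. Hence $y_j = c_j\sigma$ for some $c_j \in \ZZ$.

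From here the desired dependence is immediate. If some $c_j = 0$ then $y_j = \sum_{\chi \in X_j} \chi^0 = 0$ is already a non-trivial $\ZZ$-relation among the elements of $X^0$ (with coefficient $1$ on each member of $X_j$). Otherwise every $c_j$ is non-zero, and for any $j\neq k$ I compute
\[
 c_k \sum_{\chi \in X_j} \chi^0 - c_j \sum_{\chi \in X_k} \chi^0 \;=\; c_kc_j\sigma - c_jc_k\sigma \;=\; 0,
\]
which is again a non-trivial relation, since $X_j \cap X_k = \emptyset$ and both coefficients $c_k$ and $-c_j$ are non-zero. In either case $X^0$ fails to be linearly independent, contradicting the hypothesis.

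There is no serious obstacle here; the argument is pure bookkeeping with orbit sums. The only step I would take care to justify explicitly is the passage from $A$-invariance of $y_j$ to the conclusion $y_j = c_j\sigma$: this relies on the $\ZZ$-linear independence of $\IBr(B)$ as class functions on $\ell$-regular classes, together with the transitivity of $A$ on $\IBr(B)$, which together force the coefficient of each $\vhi \in \IBr(B)$ in $y_j$ to be the same integer.
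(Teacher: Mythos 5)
Your proof is correct, and at heart it is the same counting of $\Aut(G)_B$-invariant vectors as in the paper, but run in a more elementary and explicit way. The paper's proof is a two-liner in module language: $U:=\CC X^0$ is a subpermutation module of $V:=\CC\IBr(B)$, and since $A:=\Aut(G)_B$ is intransitive on the basis $X^0$ of $U$ (so $U$, and hence $V$, has at least a two-dimensional space of $A$-fixed vectors), $V$ cannot be a transitive permutation module. You instead take the contrapositive and work directly over $\ZZ$: transitivity of $A$ on $\IBr(B)$, together with the $\ZZ$-linear independence of the Brauer characters, forces each orbit sum $y_j$ to be an integral multiple of $\sigma=\sum_{\vhi\in\IBr(B)}\vhi$, and two orbit sums supported on disjoint parts of $X$ being proportional to the same vector yields an explicit nontrivial relation among the elements of $X^0$, contradicting their linear independence. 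What your route buys is that no semisimplicity or permutation-module formalism is needed, only linear independence of $\IBr(B)$ and of $X^0$; what the paper's route buys is brevity and the conceptual statement that intransitivity of a subpermutation module forces intransitivity of the ambient one. The one point to keep explicit (you essentially do) is that linear independence of the multi-set $X^0$ guarantees the restrictions $\chi^0$ for $\chi\in X_j\sqcup X_k$ are pairwise distinct, so your relations are genuinely nontrivial.
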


\begin{proof}
By assumption, the $A:=\Aut(G)_B$-permutation module $U:=\CC X^0$ is a
submodule of the $A$-permutation module $V:=\CC\IBr(B)$. Since $A$ is not
transitive on the basis $X^0$ of $U$, the $A$-permutation module $V$
must {\it a fortiori} be intransitive.
\end{proof}

For example this criterion applies if $X$ is an $\Aut(G)_B$-invariant basic set
for the block $B$ which is not a single $\Aut(G)_B$-orbit. The second situation
we will consider is when $X=\{\chi_1,\chi_2\}$, where $\chi_1,\chi_2$ are sums
over $\Aut(G)_B$-orbits of irreducible characters in $\Irr(B)$, such that
$\chi_1^0$ and $\chi_2^0$ are not multiples of one another.

We will also need the following observations pertaining to Lusztig induction:

\begin{prop}   \label{prop:l-rational}
 Let $\bG$ be connected reductive with Frobenius map $F$, and $\ell$ not the
 defining prime for $\bG$.
 \begin{itemize}
  \item[\rm(a)] Let $\bL\le\bG$ be an $F$-stable Levi subgroup. If
   $\la\in\Irr(\bL^F)$ is $\ell$-rational, then so is $\RLG(\la)$.
  \item[\rm(b)] Let $s\in\bG^{*F}$ be a semisimple $\ell'$-element. Then
  $\cE(\bG^F,s)$ is stable under $\ell$-Galois automorphisms.
 \end{itemize}
\end{prop}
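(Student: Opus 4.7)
The plan for part~(a) is to invoke the character formula for Lusztig induction, which expresses $\RLG(\la)(g)$ for $g\in\bG^F$ as a $\QQ$-linear combination of character values $\la(l)$, $l\in\bL^F$, the coefficients being values of Green functions attached to $F$-stable maximal tori. Since Green functions are $\ZZ$-valued (being alternating traces of Frobenius on the $\ell$-adic cohomology of Deligne--Lusztig varieties, a fact which uses $\ell\ne p$), any field automorphism $\sigma$ of $\CC$ fixes the coefficients, yielding the identity $\sigma\circ\RLG=\RLG\circ\sigma$ on virtual characters. Applying this with $\sigma$ an $\ell$-Galois automorphism and $\la$ an $\ell$-rational character gives $\sigma\RLG(\la)=\RLG(\sigma\la)=\RLG(\la)$, which is precisely the $\ell$-rationality of $\RLG(\la)$.

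For part~(b), I would combine the identity from~(a) with the Galois action on the Deligne--Lusztig parametrisation of rational series. By definition $\cE(\bG^F,s)$ is the union, over pairs $(\bT,\theta)$ in the geometric conjugacy class corresponding to $s\in\bG^{*F}$, of the irreducible constituents of $\RTG(\theta)$. The duality identifies such pairs $(\bT,\theta)$ with pairs $(\bT^*,s')$, $s'\in\bT^{*F}$; a field automorphism $\sigma$ sends the one-dimensional character $\theta$ to $\theta^a$, where $a$ is the integer defined by $\sigma(\zeta)=\zeta^a$ on $|\bT^F|$-th roots of unity, and under the duality this corresponds to $s'\mapsto(s')^a$ on semisimple classes. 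Combined with the commutation from~(a) applied to tori, this gives $\sigma(\cE(\bG^F,s))=\cE(\bG^F,s^a)$. If $\sigma$ is $\ell$-Galois then by definition $a\equiv 1\pmod{|\bG^F|_{\ell'}}$, and since $s$ is an $\ell'$-element its order divides $|\bG^F|_{\ell'}$, so $s^a=s$ and the series is stabilised, as required.

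The main conceptual ingredient is the commutation $\sigma\circ\RLG=\RLG\circ\sigma$; granting this and the compatibility of the Galois action with the Deligne--Lusztig parametrisation, both of which are classical, the rest is elementary number theory on the orders of $\ell'$-elements. I therefore do not anticipate a genuine obstacle here: once the two classical facts just cited are in place, the proof reduces to a formal computation, with the $\ell\ne p$ hypothesis entering only to guarantee the integrality of Green functions on $\ell'$-elements.
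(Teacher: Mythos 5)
Your proof is correct and follows essentially the same route as the paper: part~(a) via the character formula for $\RLG$ with rational (in fact integral, coming from the two-variable Green functions of \cite[Def.~12.1]{DM}) coefficients, hence commutation with Galois automorphisms. For~(b) your detour through the dual parametrisation $\sigma(\cE(\bG^F,s))=\cE(\bG^F,s^a)$ and $s^a=s$ is the same argument in substance as the paper's shorter observation that all $\theta$ in the geometric class of $s$ have $\ell'$-order, so the claim is immediate from~(a).
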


\begin{proof}
The character formula \cite[Prop.~12.2]{DM} expresses $\RLG(\la)(g)$,
$g\in\bG^F$, as a linear combination of values $\la(l)$, $l\in\bL^F$, with
integral coefficients (for this note that the 2-variable Green functions are
integral valued by \cite[Def.~12.1]{DM}). Claim~(a) follows. \par
The characters in $\cE(\bG^F,s)$ are by definition the constituents of the
various $\RTG(\theta)$, where $(T,\theta)$ lies in the geometric conjugacy
class of $s$. In particular, all such $\theta$ have $\ell'$-order. The
second assertion is then immediate from part~(a).
\end{proof}

We will show the following slightly more general result than
Theorem~\ref{thm:quasisimples} which is better adapted to our inductive
argument:

\begin{thm}   \label{thm:levis}
 Let $\bG$ be an $F$-stable Levi subgroup of a simple algebraic group $\bH$
 of simply connected type with a Steinberg endomorphism $F:\bH\rightarrow\bH$.
 Let $\ell$ be a prime different from the defining characteristic of $\bH$.
 If $B$ is an $\ell$-block of $G=\bG^F$ such that $\IBr(B)$ is one
 $\Aut(G)_B$-orbit, then any $\vhi\in\IBr(B)$ lifts to some $\ell$-rational
 $\chi\in\cE(G,\ell')$ such that $\chi,\vhi$ have the same stabiliser in
 $\Aut(G)_B$.
\end{thm}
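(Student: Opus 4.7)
The plan is to locate, inside the block $B$, an $\Aut(G)_B$-invariant subset of $\Irr(B)$ forming a basic set with lower unitriangular decomposition matrix, and then to extract the desired $\ell$-rational lift by exploiting the resulting canonical bijection with $\IBr(B)$. First I would invoke the Brou\'e--Michel description of blocks: every $\ell$-block of $G=\bG^F$ is contained in a union $\bigsqcup_t\cE(G,st)$ of Lusztig series, with $s\in\bG^{*F}$ a semisimple $\ell'$-element determined by $B$ and $t$ running over semisimple $\ell$-elements of $C_{\bG^*}(s)^F$. Since restriction to $\ell$-regular elements is constant along the parameter $t$, by the basic set theorems of Geck and Cabanes--Enguehard the subset $X:=\cE(G,s)\cap\Irr(B)$ is $\Aut(G)_B$-invariant and $X^0$ is a basic set for $\IBr(B)$; moreover, ordering $X$ via the Jordan correspondence by Lusztig's $a$-function on the unipotent characters of $C_{\bG^*}(s)$, the resulting decomposition matrix is lower unitriangular, and the ordering is respected both by $\Aut(G)_B$ and by $\ell$-Galois automorphisms.

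Applying Lemma~\ref{lem:orbits} to $X$, the single-orbit hypothesis on $\IBr(B)$ forces $X$ to be a single $\Aut(G)_B$-orbit, and in particular $|X|=|\IBr(B)|$. The square lower-unitriangular decomposition matrix then defines a canonical bijection $X\leftrightarrow\IBr(B)$, $\chi\leftrightarrow\vhi_\chi$, sending $\chi$ to the unique leading Brauer constituent of $\chi^0$; by its construction this bijection is both $\Aut(G)_B$-equivariant and equivariant under $\ell$-Galois automorphisms.

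From here both conclusions of the theorem would follow. Every Brauer character takes values in the cyclotomic field generated by $\ell'$-roots of unity, and is fixed pointwise by every $\ell$-Galois automorphism; Proposition~\ref{prop:l-rational}(b) ensures that these automorphisms setwise preserve $\cE(G,s)$ and hence $X$; Galois-equivariance of the canonical bijection then forces each $\chi\in X$ to be $\ell$-rational. Likewise, $\Aut(G)_B$-equivariance of the bijection yields the required stabiliser equality in $\Aut(G)_B$. I expect the main obstacle to be establishing the unitriangular basic set in the generality needed here, namely for an arbitrary $\ell$-block of an arbitrary Levi subgroup $\bG^F$ of a simply connected group with no restriction on $\ell$: the standard literature typically assumes $\ell$ is good for $\bG$, so a supplementary case analysis (using work of Enguehard, Bonnaf\'e--Rouquier, Kessar--Malle and related authors) will likely be required to cover bad primes in exceptional types.
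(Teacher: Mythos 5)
Your generic argument is essentially the paper's Lemma~\ref{lem:bs case}: once $X=\Irr(B)\cap\cE(G,s)$ is an $\Aut(G)_B$-invariant basic set, Lemma~\ref{lem:orbits} forces $X$ to be a single orbit, the degrees then force the decomposition matrix on $X$ to be a permutation matrix, and Proposition~\ref{prop:l-rational} gives $\ell$-rationality and equivariance of the lifts. Note that you do not need unitriangularity with respect to the $a$-function at all for this step --- which is fortunate, since that statement is an open conjecture in the generality you would need it; the single-orbit hypothesis plus the basic set property already yield the bijection.

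The genuine gap is that you treat the failure of the basic set hypothesis as a peripheral issue (``bad primes in exceptional types''), whereas it is exactly the hard core of the theorem, and it occurs in a much wider range of cases than you allow for. The Geck--Hiss basic set theorem (Lemma~\ref{lem:basic set}) needs $\ell$ good \emph{and} $\ell$ prime to $|(\bZ(\bG)/\bZ^\circ(\bG))^F|$; so $\cE(G,s)$ is not known to be a basic set for $\SL_n(q)$ with $\ell\mid\gcd(n,q-1)$ (resp. $\SU_n(q)$ with $\ell\mid\gcd(n,q+1)$), nor for classical groups at $\ell=2$, nor for exceptional groups at bad primes --- and these cases cannot be dismissed, since (after the Bonnaf\'e--Rouquier reduction to quasi-isolated series, Lemma~\ref{lem:only qi}, which your sketch also omits and which is what confines the problem) they are precisely what remains. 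The paper needs genuinely different arguments there: for type $A$ a regular embedding, Kessar--Malle's description of blocks via $1$-cuspidal pairs, and a comparison of the semisimple and regular characters using only linear independence of $\cE(G,s)^0$ (Proposition~\ref{prop:type A}); for classical groups at $\ell=2$ Enguehard's equality $\Irr(B)=\cE_2(G,s)$, connectedness of $C_{\bG^*}(s)$ and reduction to principal blocks (Proposition~\ref{prop:classical}); and for exceptional groups at bad primes the explicit Kessar--Malle and Enguehard classifications together with an $a$-value/unipotent-support argument producing two orbit sums with linearly independent restrictions (Proposition~\ref{prop:exc type}). As written, your proposal defers all of this, so it does not yet constitute a proof.
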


The parts (1)--(3) of Theorem~\ref{thm:quasisimples}
for the quasi-simple groups $S=\bH^F/Z$, with $Z\le \bZ(\bH^F)$ and
$\ell\ne p$ now follow from this by taking $\bG=\bH$.

We will give the proof of Theorem~\ref{thm:levis} in several steps. We begin
by recalling some known facts about $\ell$-blocks of finite reductive groups.
Let $\ell\ne p$ and let $B$ be an $\ell$-block of $G=\bG^F$. By the result of
Brou\'e and Michel (see \cite[Thm.~9.12]{CE}) there exists a semisimple
$\ell'$-element $s$ in the dual group $G^*=\bG^{*F}$ such that $\Irr(B)$ is
contained in the union
$$\cE_\ell(G,s)=\coprod_{t}\cE(G,st)$$
of Lusztig series, where $t$ runs over a system of representatives of
conjugacy classes of $\ell$-elements in $C_{G^*}(s)$. We say that $B$ is
\emph{quasi-isolated} if the centraliser $C_{\bG^*}(s)$ is not contained in any
proper $F$-stable Levi subgroup of $\bG^*$.

Let $\bG_1^*$ be the minimal $F$-stable Levi subgroup of $\bG^*$ (and hence of
$\bH^*$) with $C_{\bG^*}(s)\le\bG_1^*$. Note that this is uniquely determined,
since the intersection of any two Levi subgroups containing $C_{\bG^*}(s)$
(and hence a maximal torus of $\bH^*$) is again a Levi subgroup. Then by
construction $s$ is quasi-isolated in $\bG_1^*$. Let $\bG_1$ be an $F$-stable
Levi subgroup of $\bG$ in duality with $\bG_1^*$ and set $G_1:=\bG_1^F$.
In this situation we have:

\begin{lem}   \label{lem:only qi}
 Theorem~\ref{thm:levis} holds for a block $B$ in $\cE_\ell(G,s)$ if it holds
 for its Jordan correspondent in $\cE_\ell(G_1,s)$.
\end{lem}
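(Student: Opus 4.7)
The plan is to transfer the problem from $\bG$ to $\bG_1$ via the Bonnaf\'e--Rouquier Morita equivalence. Since $\bG_1^*$ is by construction the minimal $F$-stable Levi of $\bG^*$ containing $C_{\bG^*}(s)$, the results of Bonnaf\'e--Rouquier (in the equivariant form of Bonnaf\'e--Dat--Rouquier) yield a Morita equivalence between the sum of $\ell$-blocks of $G$ lying in $\cE_\ell(G,s)$ and the sum of $\ell$-blocks of $G_1$ lying in $\cE_\ell(G_1,s)$, realised on ordinary characters, up to a global sign, by Lusztig induction $R_{\bG_1}^\bG$. Writing $b$ for the Jordan correspondent of $B$ in $G_1$ under this equivalence, we obtain compatible bijections
\[
  J:\Irr(b)\longrightarrow\Irr(B),\qquad
  J^0:\IBr(b)\longrightarrow\IBr(B),
\]
preserving decomposition numbers and equivariant with respect to the subgroup of $\Aut(G)_B$ stabilising the pair $(\bG_1,b)$.

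First I would transport the single-orbit hypothesis. After fixing a representative of the $G$-conjugacy class of $(\bG_1,b)$, the stabiliser $\Aut(G)_B$ acts on $\IBr(b)$ through a subgroup $\bar A\le \Aut(G_1)_b$, and by equivariance of $J^0$ the hypothesis on $B$ forces $\IBr(b)$ to be a single $\bar A$-orbit, hence a single $\Aut(G_1)_b$-orbit. Now $\bG_1$ is itself an $F$-stable Levi of the simply connected simple group $\bH$, and $s$ is quasi-isolated in $\bG_1^*$, so Theorem~\ref{thm:levis} for $b$ (which we are assuming) applies: for every $\vhi_1\in\IBr(b)$ there exists an $\ell$-rational $\chi_1\in\cE(G_1,\ell')$ with $\chi_1^0=\vhi_1$ and $I_{\Aut(G_1)_b}(\chi_1)=I_{\Aut(G_1)_b}(\vhi_1)$.

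Then I would push everything back to $G$ by setting $\chi:=J(\chi_1)\in\Irr(B)$. By the decomposition-number compatibility of the Morita equivalence, $\chi^0=J^0(\vhi_1)=:\vhi$ is the chosen Brauer character in $\IBr(B)$. Since $J$ coincides up to sign with $R_{\bG_1}^\bG$, Proposition~\ref{prop:l-rational}(a) shows that $\chi$ is $\ell$-rational, and the standard compatibility of Lusztig induction with rational Lusztig series places $\chi\in\cE(G,s)\subset\cE(G,\ell')$. Finally, the equivariance of $J$ and $J^0$ under $\Aut(G)_B$ transports the identity $I_{\Aut(G_1)_b}(\chi_1)=I_{\Aut(G_1)_b}(\vhi_1)$ to the required $I_{\Aut(G)_B}(\chi)=I_{\Aut(G)_B}(\vhi)$.

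The main obstacle is the equivariance of the Bonnaf\'e--Rouquier Morita equivalence under $\Aut(G)_B$: we need it both to pull the single-orbit hypothesis from $B$ down to $b$ and to push the stabiliser identification back up from $b$ to $B$, and one has to be careful that the identification $J$ does not get twisted by a linear character of $G_1/[G_1,G_1]$ that would shift stabilisers. Once this equivariance is in hand, the rest is a direct transport of structure combined with Proposition~\ref{prop:l-rational}(a).
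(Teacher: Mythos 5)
Your argument is correct and follows essentially the same route as the paper: transfer via the Bonnaf\'e--Rouquier Morita equivalence realised by $R_{\bG_1}^\bG$, transport of the single-orbit hypothesis down to $B_1$ and of the lift with equal stabilisers back up, with $\ell$-rationality coming from Proposition~\ref{prop:l-rational}(a). The equivariance concern you flag is resolved in the paper simply by the uniqueness of $\bG_1$ as the minimal $F$-stable Levi containing $C_{\bG^*}(s)$, so that any automorphism stabilising $B$ stabilises $\cE(G,s)$ and hence induces an element of $\Aut(G_1)_{B_1}$, without needing the Bonnaf\'e--Dat--Rouquier refinement.
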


\begin{proof}
According to the theorem of Bonnaf\'e and Rouquier \cite[Thm.~10.1]{CE} Lusztig
induction $R_{\bG_1}^\bG$ induces a Morita equivalence between the
$\ell$-blocks in $\cE_\ell(G_1,s)$ and those in $\cE_\ell(G,s)$,
which sends $\Irr(B_1)$ bijectively to $\Irr(B)$ for some $\ell$-block
$B_1$ contained in $\cE_\ell(G_1,s)$. By the very definition of Lusztig
series any automorphism of $G$ either fixes $\cE(G,s)$ or sends it to a
disjoint series $\cE(G,s')$, for $s'\in G^*$ another semisimple
$\ell'$-element. In particular, any automorphism of $G$ stabilising $B$ will
also stabilise $\cE(G,s)$, and thus by the uniqueness of $\bG_1$ will induce
an element of $\Aut(G_1)_{B_1}$. Hence if $\IBr(B)$ is one orbit under
$\Aut(G)_B$, then $\IBr(B_1)$ is one $\Aut(G_1)_{B_1}$-orbit. So if the
assertion of Theorem~\ref{thm:levis} holds for $B_1$, then every
$\vhi_1\in\IBr(B_1)$ lifts to some $\chi_1\in\Irr(B_1)\cap\cE(G_1,\ell')$
with the same stabiliser in $\Aut(G_1)_{B_1}$. Via the Bonnaf\'e--Rouquier
Morita equivalence this shows that any $\vhi\in\IBr(B)$ lifts to some
$\chi\in\Irr(B)\cap\cE(G,\ell')$ with the same stabiliser in $\Aut(G)_B$. As
this bijection is given by $R_{\bG_1}^\bG$ the $\ell$-rationality properties
are also preserved by Proposition~\ref{prop:l-rational}(a).
\end{proof}

\begin{lem}   \label{lem:bs case}
 Theorem~\ref{thm:levis} holds whenever $B$ lies in a Lusztig series
 $\cE_\ell(G,s)$ such that $\cE(G,s)$ is a basic set for $\cE_\ell(G,s)$.
\end{lem}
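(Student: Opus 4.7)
The plan is to exploit the hypothesis in two complementary ways by focusing on the set $X_B:=\cE(G,s)\cap\Irr(B)$. Since $\cE(G,s)$ is a basic set for the union of blocks $\cE_\ell(G,s)$, the restrictions $X_B^0$ form a $\ZZ$-basis of $\ZZ\IBr(B)$; in particular $|X_B|=|\IBr(B)|=:n$. First I would observe that $I:=\Aut(G)_B$ acts on $X_B$: any $\sigma\in I$ preserves the series $\cE_\ell(G,s)$ containing $B$, and since the semisimple $\ell'$-label $s$ is determined up to $G^*$-conjugacy by $\cE_\ell(G,s)$, it must also preserve $\cE(G,s)$, hence $X_B$. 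Lemma \ref{lem:orbits} applied to $X=X_B$ then forces $X_B$ to be a single $I$-orbit, since $\IBr(B)$ is by hypothesis.

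Next I would analyse the decomposition matrix $D=(d_{\chi,\vhi})_{\chi\in X_B,\,\vhi\in\IBr(B)}$ defined by $\chi^0=\sum_\vhi d_{\chi,\vhi}\,\vhi$. Its entries are non-negative integers, and $\det D=\pm1$ because $X_B^0$ is a $\ZZ$-basis of $\ZZ\IBr(B)$. The equivariance $d_{\chi^a,\vhi^a}=d_{\chi,\vhi}$ together with the transitivity of $I$ on $X_B$ forces all row sums to equal a common value $r\in\ZZ_{\ge0}$; the all-ones vector is then an eigenvector of $D$ with integer eigenvalue $r$, so $r$ divides $\det D=\pm1$, giving $r=1$. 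Consequently $D$ is a permutation matrix, and $\chi\mapsto\chi^0$ is an $I$-equivariant bijection $X_B\to\IBr(B)$. For each $\vhi\in\IBr(B)$ its unique preimage $\chi$ therefore satisfies $I_\chi=I_\vhi$ and lies in $\cE(G,s)\subseteq\cE(G,\ell')$, yielding properties analogous to (1) and (3) of Theorem \ref{thm:quasisimples}.

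For $\ell$-rationality, take a Galois automorphism $\sigma$ of $\QQ^{\mathrm{ab}}/\QQ$ fixing all $\ell'$-power roots of unity. Such $\sigma$ fixes every Brauer character pointwise; from $(\chi^\sigma)^0=(\chi^0)^\sigma=\chi^0$ for $\chi\in\Irr(G)$ we see $\sigma$ preserves each $\ell$-block, hence $\sigma(\Irr(B))=\Irr(B)$. By Proposition \ref{prop:l-rational}(b) it also preserves $\cE(G,s)$, and hence $X_B$. For $\chi\in X_B$, combining $(\chi^\sigma)^0=\chi^0$ with the injectivity of $\chi\mapsto\chi^0$ on $X_B$ yields $\chi^\sigma=\chi$, so every $\chi\in X_B$ is $\ell$-rational, as required.

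The principal obstacle is the row-sum/eigenvalue argument showing that $D$ must be a permutation matrix; this is where the basic set hypothesis, the single $I$-orbit hypothesis, and the integrality and non-negativity of $D$ work together in an essential way. The other pieces---$I$- and $\ell$-Galois-stability of $\cE(G,s)$, the $\ell$-rationality of Brauer characters, and Lemma \ref{lem:orbits}---are comparatively routine, being either standard or already prepared in the preceding material.
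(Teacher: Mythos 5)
Your proposal is correct and follows essentially the same route as the paper: $\Aut(G)_B$-invariance of $\cE(G,s)$ (as noted in the proof of Lemma~\ref{lem:only qi}), Lemma~\ref{lem:orbits} to force a single orbit on $X_B$, the conclusion that restriction to $\ell$-regular elements is an equivariant bijection $X_B\to\IBr(B)$, and $\ell$-rationality via Proposition~\ref{prop:l-rational}. The only difference is cosmetic: you obtain the permutation-matrix property of the decomposition matrix by the row-sum/unimodularity argument, whereas the paper gets the same conclusion by comparing character degrees along the two orbits; both are valid, and yours in fact spells out a step the paper leaves terse.
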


\begin{proof}
Let $B$ be as in the assertion. By assumption the set $X:=\Irr(B)\cap\cE(G,s)$
is a basic set for $B$, and $\Aut(G)_B$-invariant by the remarks in the proof
of Lemma~\ref{lem:only qi}. Thus, if $\Aut(G)_B$ has more than one orbit on
$X$, then by Lemma~\ref{lem:orbits} it necessarily has more than one orbit on
$\IBr(B)$. So, if $\IBr(B)$ is a single $\Aut(G)_B$-orbit, then all elements
in $X$ are $\Aut(G)_B$-conjugate and in particular have the same degree $d$,
say. But since $X$ is a basic set, then all elements of $\IBr(B)$ must have
this degree~$d$, hence $\IBr(B)$ consists of the Brauer characters of the
$\ell$-modular reductions of the characters in $X$. In particular, they are
all liftable to elements of $\cE(G,\ell')$ under preservation of stabiliser
in $\Aut(G)_B$, and these lifts have to be $\ell$-rational by
Proposition~\ref{prop:l-rational}.
\end{proof}

Before we continue, we need to recall an auxiliary result about the existence
of ordinary basic sets:

\begin{lem}   \label{lem:basic set}
 Let $\bG$ be connected reductive with a Steinberg endomorphism $F$. Let
 $s\in\bG^{*F}$ be a semisimple $\ell'$-element, and $\bG_1\le\bG$ an
 $F$-stable Levi subgroup such that $\bG_1^*$ contains $C_{\bG^*}(s)^F$.
 Assume that $\ell$ is good for $\bG_1$ and prime to the order of
 $(\bZ(\bG_1)/\bZ^\circ(\bG_1))^F$.
 Then $\cE(\bG^F,s)$ is a basic set for the blocks in $\cE_\ell(\bG^F,s)$.
\end{lem}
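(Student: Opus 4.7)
My plan is to reduce from $\bG$ to $\bG_1$ via a Morita equivalence and then apply the Geck--Hiss theorem on ordinary basic sets within a single Lusztig series. The hypotheses that $\ell$ is good for $\bG_1$ and coprime to $|(\bZ(\bG_1)/\bZ^\circ(\bG_1))^F|$ are precisely those required for the Geck--Hiss theorem, so the crux of the argument is the reduction step.

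First, since $\bG_1^*$ contains $C_{\bG^*}(s)^F$, the Bonnaf\'e--Rouquier theorem \cite[Thm.~10.1]{CE}, already employed in Lemma~\ref{lem:only qi}, yields a Morita equivalence between the union of $\ell$-blocks of $G_1:=\bG_1^F$ in $\cE_\ell(G_1,s)$ and that of $\ell$-blocks of $G=\bG^F$ in $\cE_\ell(G,s)$. On characters this equivalence is given (up to sign) by Lusztig induction $R_{\bG_1}^\bG$ and sends $\cE(G_1,s)$ bijectively onto $\cE(G,s)$. Since Morita equivalences preserve decomposition matrices, it suffices to verify the basic set assertion inside $G_1$.

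Second, the hypotheses let me apply the Geck--Hiss theorem: $\cE(G_1,\ell')$ restricted to $\ell$-regular classes is a $\ZZ$-basis of $\ZZ\IBr(G_1)$. Because $\cE_\ell(G_1,s)$ is a union of $\ell$-blocks by Brou\'e--Michel \cite[Thm.~9.12]{CE}, and because the restriction of $\RTG(\theta)$ to $\ell$-regular elements depends only on the $\ell'$-part of $\theta$, the basic set statement refines to a single Lusztig series: $\cE(G_1,s)$ is a basic set for the blocks in $\cE_\ell(G_1,s)$. Transporting this via the Bonnaf\'e--Rouquier equivalence gives the claim.

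The chief subtlety I anticipate is the applicability of Bonnaf\'e--Rouquier under the hypothesis that only the rational centralizer $C_{\bG^*}(s)^F$, rather than the full $C_{\bG^*}(s)$, is assumed to lie in $\bG_1^*$. This is resolved by replacing $\bG_1$ with the minimal $F$-stable Levi containing $C_{\bG^*}(s)$ (uniquely characterised exactly as in the discussion preceding Lemma~\ref{lem:only qi}); it still satisfies the goodness and centre hypotheses since these are inherited by Levi subgroups, and Bonnaf\'e--Rouquier then applies in its standard form.
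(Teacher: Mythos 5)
Your first two paragraphs are exactly the paper's proof: the hypotheses on $\bG_1$ are those of the Geck--Hiss theorem \cite[Thm.~14.4]{CE}, which gives that $\cE(\bG_1^F,s)$ is a basic set for the blocks in $\cE_\ell(\bG_1^F,s)$, and the Bonnaf\'e--Rouquier Morita equivalence \cite[Thm.~10.1]{CE} transports this to $\bG^F$, since it sends $\cE(\bG_1^F,s)$ bijectively onto $\cE(\bG^F,s)$ and Morita equivalence preserves decomposition matrices. Up to that point your argument and the paper's coincide.

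The problem lies in your final paragraph. The subtlety you raise is real (the lemma is stated with $C_{\bG^*}(s)^F\le\bG_1^*$, whereas the Bonnaf\'e--Rouquier theorem in the form cited is for a Levi subgroup containing the algebraic centraliser $C_{\bG^*}(s)$), but your repair does not work: containment of the finite group $C_{\bG^*}(s)^F$ in $\bG_1^*$ does not force containment of $C_{\bG^*}(s)$, so the minimal $F$-stable Levi subgroup of $\bG^*$ containing $C_{\bG^*}(s)$ need not lie inside $\bG_1^*$ --- it can be strictly larger, indeed equal to $\bG^*$ itself when $s$ is quasi-isolated --- and then neither the goodness of $\ell$ nor the condition on $(\bZ(\bG_1)/\bZ^\circ(\bG_1))^F$ is ``inherited'', since these are hypotheses on $\bG_1$ and do not pass upward to larger Levi subgroups (nor is the centre condition automatic even for smaller ones). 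The paper makes no such replacement: it applies \cite[Thm.~10.1]{CE} directly to the given $\bG_1$, and in every application of the lemma either $\bG_1=\bG$, so that only Geck--Hiss is needed, or $s$ is not quasi-isolated, so that the full centraliser $C_{\bG^*}(s)$ does lie in the proper Levi subgroup used, exactly as in the setup preceding Lemma~\ref{lem:only qi}. In other words, the hypothesis should be read (and is used) as $C_{\bG^*}(s)\le\bG_1^*$; your attempted fix by changing the Levi subgroup introduces a gap rather than closing one.
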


\begin{proof}
By the theorem of Geck and Hiss \cite[Thm.~14.4]{CE} the assumptions made on
$\bG_1$ ensure that $\cE(\bG_1^F,s)$ is a basic set for the blocks in
$\cE_\ell(\bG_1^F,s)$. Now by \cite[Thm.~10.1]{CE}
Lusztig induction $R_{\bG_1}^\bG$ induces a Morita equivalence between the
$\ell$-blocks in $\cE_\ell(\bG_1^F,s)$ and those in $\cE_\ell(\bG^F,s)$,
which sends $\cE(\bG_1^F,s)$ bijectively to $\cE(\bG^F,s)$. The claim follows.
\end{proof}

\begin{cor}   \label{cor:basic set}
 Let $\bG$ be simple of type $G_2$, $F_4$ or $E_6$ and $\ell=3$, or of type
 $E_8$ and $\ell=5$. Then $\cE(\bG^F,s)$ is a basic set for the $\ell$-blocks
 in $\cE_\ell(\bG^F,s)$ unless $s\in\bG^{*F}$ is quasi-isolated (and hence of
 order at most~2 if $\bG$ does not have type $E_8$).
\end{cor}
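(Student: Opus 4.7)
My plan is to deduce the corollary by a direct application of Lemma~\ref{lem:basic set}. Given a semisimple $\ell'$-element $s\in\bG^{*F}$ that is not quasi-isolated, by definition $C_{\bG^*}(s)$ sits inside some proper $F$-stable Levi subgroup of $\bG^*$; I would let $\bG_1\le\bG$ be an $F$-stable Levi subgroup whose dual $\bG_1^*$ contains $C_{\bG^*}(s)^F$. It then remains to verify the two hypotheses of Lemma~\ref{lem:basic set}, namely (i)~that $\ell$ is a good prime for $\bG_1$, and (ii)~that $\ell\nmid|(\bZ(\bG_1)/\bZ^\circ(\bG_1))^F|$, after which the conclusion is immediate.

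For (i), I would argue by inspection of the Dynkin diagrams. In types $G_2$, $F_4$, $E_6$, every proper Levi subgroup has root system whose irreducible components all lie among $A_n$, $B_n$, $C_n$, $D_n$; the bad primes for these types are contained in $\{2\}$, so $\ell=3$ is good for $\bG_1$. In type $E_8$, proper Levi subgroups have components among $A_n$, $D_n$, $E_6$, $E_7$, whose bad primes lie in $\{2,3\}$, so $\ell=5$ is good. For (ii) in the types $G_2$, $F_4$, $E_8$ the fundamental group $P(\Phi)/Q(\Phi)$ is trivial, so that $\bG$ is simultaneously simply connected and adjoint; from the standard lattice-theoretic identification of $\bZ(\bG_1)/\bZ^\circ(\bG_1)$ with the torsion part of $X(\bT)/\ZZ\Phi_{\bG_1}$ one reads off that every Levi subgroup has connected center, and the condition is automatic.

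The main obstacle is (ii) in type $E_6$, where the fundamental group has order $3$: a direct lattice computation shows, for instance, that the $A_5$-type Levi of simply connected $E_6$ has center component group isomorphic to $\ZZ/3$. My plan here is to exploit the flexibility in the choice of $\bG_1^*$ (it need only contain $C_{\bG^*}(s)^F$, not be minimal with that property), combined with the Borel--de~Siebenthal classification of centralisers of semisimple $\ell'$-elements in $E_6$, to verify case by case that $\bG_1$ can always be chosen so as to avoid the problematic $\ZZ/3$-torsion. The parenthetical in the statement, asserting that the quasi-isolated $\ell'$-elements in $G_2$, $F_4$, $E_6$ are forced to have order at most~$2$, reflects precisely that those elements which would obstruct the argument are exactly the ones excluded from our hypothesis.
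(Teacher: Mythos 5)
Your handling of hypothesis (i) and of the types $G_2$, $F_4$, $E_8$ is correct and is exactly the paper's argument (trivial center, hence connected centers for all Levi subgroups). The genuine gap is in your plan for type $E_6$, and it is not merely that the case-by-case check via Borel--de Siebenthal is announced rather than carried out: the check cannot succeed in the form you describe. Your own (correct) computation that the $A_5$-Levi of the simply connected $E_6$ has center with component group $\ZZ/3$ already produces elements that defeat the strategy. Take $s\in\bG^{*F}$ a semisimple $3'$-element whose full centralizer is exactly the Levi subgroup of type $A_5T_1$ (generic elements of suitable prime order in the one-dimensional connected center of that Levi achieve this for infinitely many $q$). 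Such an $s$ is \emph{not} quasi-isolated, since $C_{\bG^*}(s)$ lies in a proper Levi subgroup, so it is not excluded by the hypothesis; yet any $F$-stable Levi $\bG_1^*$ containing $C_{\bG^*}(s)^F$ must contain all root subgroups of the $A_5$ (the finite centralizer is Zariski dense in them for $q\ge3$), hence is the $A_5$-Levi itself or all of $\bG^*$. So the only admissible $\bG_1$ carries the $\ZZ/3$-torsion, and when $q\equiv 1\pmod 3$ (untwisted; analogously in the twisted case) one has $|(\bZ(\bG_1)/\bZ^\circ(\bG_1))^F|=3$, so Lemma~\ref{lem:basic set} is inapplicable for \emph{every} choice of $\bG_1$. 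The same phenomenon occurs for $s$ whose centralizer contains the $A_2{+}A_2$ Levi subsystem: the only Levi subsystems of $E_6$ containing it are $A_2{+}A_2$, $A_2{+}A_2{+}A_1$ and $A_5$, and all three corresponding Levi subgroups of the simply connected group have center component group $\ZZ/3$. In particular your closing claim --- that the obstructing elements are precisely the quasi-isolated ones --- is false.

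For comparison, the paper settles $E_6$ in one line by asserting that every proper Levi subgroup has connected center; this is what happens in the adjoint group, whereas your $A_5$ computation shows it fails in the simply connected group, which is the relevant one in the applications. So you have identified a genuine subtlety rather than a routine verification, but the flexibility in choosing $\bG_1$ does not close it: for the elements above one needs an input beyond Lemma~\ref{lem:basic set}, for instance known basic-set results for the type-$A$ groups arising as the problematic Levi subgroups, transported through the Bonnaf\'e--Rouquier equivalence as in Lemma~\ref{lem:only qi}, and your proposal does not supply such an argument.
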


\begin{proof}
In all listed cases, the prime $\ell$ is good for any proper Levi subgroup
of $\bG$. Unless $\bG$ is of type $E_6$, $\bG$ has trivial center, so all
assumptions of Lemma~\ref{lem:basic set} are satisfied as soon as
$C_{\bG^*}(s)^F$ is contained in a proper Levi subgroup of $\bG$.
In type $E_6$ it is readily checked that any proper Levi subgroup
has connected center.
\end{proof}

\begin{prop}   \label{prop:type A}
 Theorem~\ref{thm:levis} holds when $\bH$ is of type $A$.
\end{prop}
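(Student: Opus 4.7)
The plan is to apply the basic-set argument of Lemma~\ref{lem:bs case} to $G$ after transferring information from a regular envelope. Since $\bH$ is simply connected of type~$A$, the $F$-stable Levi subgroup $\bG$ is a product of $\SL$-type factors with a central torus, and hence embeds in the standard way into a connected reductive group $\w\bG$ of type~$A$ with connected centre and $\bG = [\w\bG,\w\bG]$. Put $\w G := \w\bG^F$, write $B \subseteq \cE_\ell(G,s)$ for a semisimple $\ell'$-element $s \in \bG^{*F}$, and pick an $\ell$-block $\w B$ of $\w G$ covering $B$, say with $\w B \subseteq \cE_\ell(\w G,\tilde s)$ for a semisimple $\ell'$-lift $\tilde s \in \w\bG^{*F}$ of $s$.

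Since $\bZ(\w\bG)$ is connected and every prime is good for type~$A$, Lemma~\ref{lem:basic set} applied with $\w\bG_1 = \w\bG$ shows that $\cE(\w G,\tilde s)$ is a basic set for the union of $\ell$-blocks in $\cE_\ell(\w G,\tilde s)$. The quotient $\w G/G$ is abelian, and standard Clifford theory together with the compatibility of Lusztig series under regular embeddings (the characters of $\cE(G,s)$ are the irreducible constituents of the restrictions to $G$ of characters in $\cE(\w G,\tilde s)$) lets me descend this to the assertion that
\[
  X := \Irr(B) \cap \cE(G,s)
\]
is a basic set for $B$ itself. Moreover $X$ is $\Aut(G)_B$-stable: algebraic automorphisms permute Lusztig series, any automorphism stabilising $B$ must therefore stabilise $\cE_\ell(G,s)$ and hence $\cE(G,s)$ as a set, while $\ell$-Galois automorphisms fix $\cE(G,s)$ by Proposition~\ref{prop:l-rational}(b).

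I can now rerun the argument of Lemma~\ref{lem:bs case} verbatim: if $\IBr(B)$ is one $\Aut(G)_B$-orbit, Lemma~\ref{lem:orbits} forces $X$ to be a single $\Aut(G)_B$-orbit as well, so all $\chi \in X$ share a common degree $d$. The basic-set property then forces every $\vhi \in \IBr(B)$ to equal the modular reduction $\chi^0$ of some $\chi \in X$ (otherwise $\vhi(1)$ would simultaneously be a positive integral multiple of $d$ and strictly bounded above by $d$), and $\chi$ and $\vhi$ automatically share the same $\Aut(G)_B$-stabiliser. The $\ell$-rationality of the chosen lift follows from Proposition~\ref{prop:l-rational}(a): via Jordan decomposition $\chi \in \cE(G,s)$ arises by Lusztig induction from an $\ell$-rational character, namely the product of a unipotent character with the linear character of a torus associated to the $\ell'$-element $s$.

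The main obstacle is making the descent step precise: one has to verify that restricting the basic set $\cE(\w G,\tilde s)$ from $\w G$ to $G$ produces a basic set compatible with the individual block $B$, and not merely with the entire union $\cE_\ell(G,s)$. This requires careful bookkeeping with the covering relation between $\ell$-blocks of $\w G$ and $\ell$-blocks of $G$ and with the tensor-product action of $\Irr(\w G/G)$ on $\Irr(G)$. A secondary subtlety is that in type $A_n$ with $n \geq 2$ the graph automorphism sends $\cE(G,s)$ to $\cE(G,s^{-1})$; however, any automorphism stabilising $B$ must map $\cE(G,s)$ to a $G^*$-conjugate Lusztig series, so set-wise stability of $X$ is still guaranteed.
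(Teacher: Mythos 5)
Your reduction to the case $\ell\mid|\bZ(\bG)^F|$ (and your treatment of the opposite case via Lemma~\ref{lem:basic set} and Lemma~\ref{lem:bs case}) matches the paper, but the heart of your argument --- the claim that $X=\Irr(B)\cap\cE(G,s)$ is a basic set for $B$, obtained by ``descending'' the basic set $\cE(\widetilde G,\tilde s)$ through the regular embedding --- is precisely the point that cannot be taken for granted, and it is false in general. Restriction from $\widetilde G$ to $G$ only controls the sublattice of $\ZZ\IBr(B)$ spanned by sums over orbits of diagonal automorphisms of Brauer characters, and when $\ell$ divides $|(\bZ(\bG)/\bZ^\circ(\bG))^F|$ the set $\Irr(B)\cap\cE(G,s)$ can simply be too small: for $G=\SL_2(q)$, $\ell=2$, $s=1$ the principal block has three irreducible Brauer characters (quaternion defect groups), while $\Irr(B)\cap\cE(G,1)=\{1_G,\mathrm{St}\}$ has only two elements, so it is linearly independent (consistent with \cite[Thm.~14.6]{CE}) but not a basic set. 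That particular block does not satisfy the one-orbit hypothesis, but it shows that the descent you describe --- and which you yourself flag as ``the main obstacle'' without proving it --- breaks down exactly in the range $\ell\mid(q\mp1)$ where you need it. Consequently the branch of your proof in which all characters of $X$ have a common degree $d$, where you invoke the basic-set property to force $\IBr(B)=X^0$, is unsupported.

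The paper avoids any basic-set claim for $G$ at these primes. It uses only the linear independence of $\cE(G,s)^0$ together with \cite[Thm.~A(a)]{KM15}, which supplies a $1$-cuspidal (resp.\ $2$-cuspidal) pair $(\bL,\la)$ such that $\Irr(B)$ contains all constituents of $\RLG(\la)$, in particular characters $\chi_1,\chi_2$ lying below the semisimple and the regular character of $\cE(\widetilde G,\tilde s)$. If $\chi_1(1)\ne\chi_2(1)$, Lemma~\ref{lem:orbits} already excludes a single $\Aut(G)_B$-orbit on $\IBr(B)$; if $\chi_1(1)=\chi_2(1)$, then $C^\circ_{\bG^*}(s)$ is a torus, so $\cE(\widetilde G,\tilde s)=\{\tchi\}$, the covering block $\widetilde B$ has $\IBr(\widetilde B)=\{\tchi^0\}$, and a degree-divisibility argument through the Jordan decomposition of the series $\cE(G,st)$ then yields $\IBr(B)=\cE(G,s)^0$ directly. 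To repair your proof you would either need to reproduce an argument of this kind or actually prove the basic-set statement for $\SL_n(q)$, $\SU_n(q)$ with $\ell\mid(q\mp1)$, which Lemma~\ref{lem:basic set} does not give and which your Clifford-theoretic ``bookkeeping'' does not establish.
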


\begin{proof}
Note that any prime $\ell$ is good for $\bH$, hence for $\bG$. If $\ell$ does
not divide $|\bZ(\bG)^F|$ then $\cE(G,s)$ is a basic set for all blocks in
$\cE_\ell(G,s)$ by Lemma~\ref{lem:basic set}, and the claim follows from
Lemma~\ref{lem:bs case}.
\par
Now first assume that $\bH^F=\SL_n(q)$. As $|\bZ(\bH)^F|=\gcd(n,q-1)$ we may
assume that $\ell|(q-1)$. Let $B$ be an $\ell$-block of $G=\bG^F$ in
$\cE_\ell(G,s)$. Then by \cite[Thm.~A(a)]{KM15} there exists a 1-cuspidal
pair $(\bL,\la)$ with a 1-split Levi subgroup $\bL\le \bG$ such $\Irr(B)$
contains all constituents of $\RLG(\la)$. Consider a regular embedding
$\bG\hookrightarrow\tilde\bG$ and let $\tilde s\in \tilde \bG^{*F}$ be
a preimage of $s$ under the induced map $\tilde\bG^*\rightarrow\bG^*$ of dual
groups. Then $\cE(\tilde G,\tilde s)$ is a single 1-Harish-Chandra series, for
a 1-cuspidal character $\tla$ lying above $\la$ of the Levi subgroup $\tilde L$
of $\tilde G$ with $\tilde L\cap G=\bL^F$. Now let $\chi_1,\chi_2$ be
constituents of $\RLG(\la)$ below the semisimple and the regular character of
$\cE(\tilde G,\tilde s)$, respectively.
\par
First assume that $\chi_1(1)\ne\chi_2(1)$. As $\ell$ is good for $G$,
$\cE(G,s)^0$ is linearly independent by \cite[Thm.~14.6]{CE}, hence the
$\Aut(G)_B$-orbit sums of $\chi_1,\chi_2$ satisfy the assumptions of
Lemma~\ref{lem:orbits}, and $\IBr(B)$ cannot be a single $\Aut(G)_B$-orbit.
Thus we have $\chi_1(1)=\chi_2(1)$. But these correspond, under Jordan
decomposition, to the trivial and the Steinberg character of
$C_{\bG^*}^\circ(s)^F$,
whence $C_{\bG^*}^\circ(s)$ is a torus. Then so is $C_{\tilde\bG^*}(\tilde s)$,
and hence $\cE(\tilde G,\tilde s)=\{\tchi\}$ has just one element, and since
$\tilde\bG$ has connected center and $\ell$ is good for $\tilde\bG$,
$\{\tchi\}$ is a basic set for the block $\tilde B$ of $\tilde G$
covering $B$ containing $\tchi$. In particular $\IBr(\tilde B)=\{\tchi^0\}$,
and $\tchi,\tchi^0$ are invariant under the same automorphisms of
$\tilde G$.
\par
Since $\cE(G,s)^0$ is linearly independent (again by \cite[Thm.~14.6]{CE}),
all elements of $\Irr(B)\cap\cE(G,s)$ must have the same degree $d$, say.
Now clearly $C_{\bG^*}(st)\le C_{\bG^*}(s)$ for all $\ell$-elements
$t\in C_{\bG^*}(s)$, so all characters in $\cE_\ell(G,s)$ have degree
divisible by $d$ by the Jordan decomposition of characters. Thus all elements
of $\IBr(B)$ have degree divisible
by $d$. By what we said before, then certainly $\cE(G,s)^0\subseteq\IBr(B)$.
On the other hand, since $\cE(\tilde G,\tilde s)=\{\tchi\}$ and
$\IBr(\tilde B)=\{\tchi^0\}$, we must actually have $\cE(G,s)^0=\IBr(B)$.
Then clearly $B$ satisfies the conclusion of Theorem~\ref{thm:levis}.
\par
If $\bH^F=\SU_n(q)$, then necessarily $\ell$ divides $q+1$, and we can argue
entirely similar, with $1$-cuspidal and $1$-series replaced by $2$-cuspidal
and $2$-series.
\end{proof}

\begin{prop}   \label{prop:classical}
 Theorem~\ref{thm:levis} holds if all components of $\bG$ are of classical
 type $A$, $B$, $C$ or~$D$ and moreover $\ell\ne2$ if $\bG^F$ has a component
 of type $\tw3D_4$.
\end{prop}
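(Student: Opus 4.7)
The plan is to follow the template of Proposition~\ref{prop:type A}: reduce to the quasi-isolated case via Lemma~\ref{lem:only qi}, exhibit an $\Aut(G)_B$-invariant ordinary basic set, and then invoke Lemma~\ref{lem:bs case} (or Lemma~\ref{lem:orbits} directly). By Lemma~\ref{lem:only qi} we may assume $s$ is quasi-isolated in $\bG^*$. Since $\bG$ is a Levi in the simply connected simple group $\bH$, its components are of types $A$, $B$, $C$, or $D$; Lusztig series and the Bonnaf\'e--Rouquier Morita equivalence factor across the decomposition into components, so Proposition~\ref{prop:type A} lets us restrict attention to $\bG$ whose components are all of type $B$, $C$, or $D$ (with $\tw3D_4$ excluded when $\ell=2$).

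If $\ell$ is odd, then $\ell$ is good for $\bG$. Moreover, since $\bH$ is simply connected of classical type its centre is a $2$-group, hence $\bZ(\bG)/\bZ^\circ(\bG)$ is a $2$-group for any $F$-stable Levi $\bG\le\bH$; in particular $\ell\nmid|(\bZ(\bG)/\bZ^\circ(\bG))^F|$. Lemma~\ref{lem:basic set} now implies that $\cE(G,s)$ is a basic set for the blocks in $\cE_\ell(G,s)$, and Lemma~\ref{lem:bs case} concludes the argument, with the $\ell$-rationality of the lift following from Proposition~\ref{prop:l-rational}.

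Suppose now $\ell=2$. Then $s$ has odd order; by Bonnaf\'e's classification of quasi-isolated semisimple elements in classical dual groups, such elements have $2$-power order modulo $\bZ(\bG^*)$, and combined with $\bZ(\bG^*)$ being a $2$-group this forces $s=1$. Thus $B$ is a unipotent $2$-block, and the unipotent characters in $\cE(G,1)$ are rational-valued, in particular $2$-rational. The principal obstacle is that $\cE(G,1)$ need not be a basic set for $B$ when $\ell=2$ is bad for $\bG$, so Lemma~\ref{lem:basic set} is unavailable. One must instead invoke Enguehard's description of unipotent $2$-blocks of classical groups and the modular Harish-Chandra theory of Cabanes--Enguehard to extract an $\Aut(G)_B$-invariant basic set $X\subseteq\cE(G,1)$; Lemma~\ref{lem:orbits} applied to $X$, combined with the single-orbit hypothesis on $\IBr(B)$, then forces the elements of $X$ to share a common degree, and triangularity of the decomposition matrix on $X$ yields the required $2$-rational lift with matching stabiliser. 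The exclusion of $\tw3D_4$ at $\ell=2$ appears precisely to sidestep the additional combinatorial complications of the triality-twisted unipotent $2$-blocks in this step.
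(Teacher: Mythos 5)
Your odd-$\ell$ argument is essentially the paper's, but your $\ell=2$ case has a genuine gap at the decisive step. After reducing to $s$ central (your claim that quasi-isolatedness forces $s=1$ already overlooks the positive-dimensional central torus of $\bG^*$ when $\bG$ is a proper Levi, so at best $s$ is central of odd order), you appeal to ``Enguehard's description of unipotent $2$-blocks and modular Harish-Chandra theory'' to produce an $\Aut(G)_B$-invariant ordinary basic set inside $\cE(G,1)$ together with a triangularity statement for the $2$-modular decomposition matrix. No such result is available: $\ell=2$ is a bad prime for types $B$, $C$, $D$, Lemma~\ref{lem:basic set} explicitly excludes it, $\cE(G,1)$ is in general not a basic set at $\ell=2$, and unitriangularity of the $2$-modular decomposition matrix of classical groups is precisely the kind of open problem one cannot invoke. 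The paper's route avoids this entirely: by Enguehard's result one has $\Irr(B)=\cE_2(G,s)$; since $s$ has odd order and $\bZ(\bG)/\bZ^\circ(\bG)$ is a $2$-group, $C_{\bG^*}(s)$ is connected and is a Levi subgroup, so Lemma~\ref{lem:only qi} (Bonnaf\'e--Rouquier) reduces $B$ to a block Morita equivalent to the principal block of the dual Levi, and for the principal block the single-orbit hypothesis trivialises, because the trivial Brauer character is fixed by every automorphism, forcing $\IBr=\{1^0\}$ with the trivial character as the ($2$-rational, stabiliser-preserving) lift. A similar ``distinguished automorphism-fixed character'' observation would rescue your central-$s$ situation (the unique linear character $\hat s\in\cE(G,s)$ plays this role), but as written your finish rests on an unproved basic-set/triangularity assertion.

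Two smaller points. First, your claim that $\bZ(\bG)/\bZ^\circ(\bG)$ is a $2$-group is justified by assuming $\bH$ is of classical type, but the hypothesis only concerns the components of the Levi $\bG$: the ambient $\bH$ may be exceptional, and for $\bH$ of type $E_6$ with $\ell=3$ the center of $\bH$ has order $3$, so one needs the separate (easy) check that proper Levi subgroups of $E_6$ have connected center, as the paper does. Second, your reduction discarding type~$A$ components by ``factoring Lusztig series and Bonnaf\'e--Rouquier across the decomposition into components'' is not justified: $\bG$ is not a direct product of its components (they are glued along the center and the central torus, and $F$ may permute them), and Proposition~\ref{prop:type A} is stated for $\bH$ of type $A$, not for type~$A$ factors of a mixed-type Levi; the paper instead treats $\bG$ as a whole, which is why the $2$-power bound on $\bZ(\bG)/\bZ^\circ(\bG)$ and the connectedness of $C_{\bG^*}(s)$ are the relevant inputs.
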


\begin{proof}
By Lemma~\ref{prop:type A} we may assume that $\bH$ is not of type $A$.
We claim that $\bZ(\bG)/\bZ^\circ(\bG)$ is of 2-power order. Indeed, if $\bH$
is not of type $E_6$, then $\bZ(\bH)$ has 2-power order and the claim follows.
If $\bH$ is of type $E_6$, then it is easily verified that all proper Levi
subgroups have connected center (in fact, this is only an issue for those
having a factor $A_5$ or $A_2$).
\par
Now first assume that $\ell>2$. Let $B$ be an $\ell$-block of $G$, lying in
series $s\in \bG^{*F}$. By our previous observation
$|\bZ(\bG)^F/\bZ^\circ(\bG)^F|$ is prime to $\ell$, and as $\ell\ge3$ is good
for $\bG$,
Lemma~\ref{lem:basic set} applies to $B$ to yield that $\Irr(B)\cap\cE(G,s)$
is a basic set for $B$. Thus, we conclude by Lemma~\ref{lem:bs case}.
\par
We are left to consider the case $\ell=2$. Let $s\in G^*$ be a semisimple
$2'$-element such that $\Irr(B)\subset\cE_\ell(G,s)$. Recall that $G$ has no
component of type $\tw3D_4$. Then by a result of Enguehard (see
e.g.~\cite[Lemma~3.3]{KM15}) we have that $\Irr(B)=\cE_\ell(G,s)$. Since
$\bZ(\bG)/\bZ^\circ(\bG)$ is a 2-group and $s$ has odd order, the centraliser
$C_{\bG^*}(s)$ is connected and a Levi subgroup of $\bG^*$. Let $\bG_1\le\bG$
be an $F$-stable Levi subgroup dual to $C_{\bG^*}(s)$. By
Lemma~\ref{lem:only qi} we may pass to the Morita equivalent Jordan
corresponding block $B_1$ in $\cE(\bG_1^F,s)$, which in turn is Morita
equivalent to the principal block of $\bG_1^F$ as $s\in\bZ(\bG_1^{*F})$.
The latter obviously satisfies the assertions of Theorem~\ref{thm:levis}.
\end{proof}

\begin{prop}   \label{prop:exc type}
 Theorem~\ref{thm:levis} holds when $\bG^F$ has a component of exceptional
 type.
\end{prop}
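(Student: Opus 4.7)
The plan is to parallel the proof of Proposition~\ref{prop:classical}: first reduce to a quasi-isolated series via Lemma~\ref{lem:only qi}, then dispose of the generic cases through the basic set criterion of Lemma~\ref{lem:bs case}, and finally handle a short explicit list of quasi-isolated blocks at bad primes by direct inspection.

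First I would apply Lemma~\ref{lem:only qi} to pass to the Jordan corresponding block $B_1$ in $\cE_\ell(\bG_1^F,s)$, so that $s$ is quasi-isolated in $\bG_1^*$. If every component of $\bG_1$ is classical, then Proposition~\ref{prop:type A} or Proposition~\ref{prop:classical} closes the argument (note that in $\bG_1$ a $\tw3D_4$-component can only arise when $\bH$ itself has such a component, but then $\bH$ is already exceptional and the case $\ell=2$ is postponed to the analysis below). Replacing $\bG$ by $\bG_1$, we may therefore assume that $\bG$ has an exceptional component and that $s$ is quasi-isolated in $\bG^*$.

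Next, whenever $\ell$ is good for $\bG$ and prime to $|(\bZ(\bG)/\bZ^\circ(\bG))^F|$, Lemma~\ref{lem:basic set} yields $\cE(G,s)$ as an $\Aut(G)_B$-invariant basic set for the blocks in $\cE_\ell(G,s)$, whence Lemma~\ref{lem:bs case} combined with Proposition~\ref{prop:l-rational}(a) produces the required $\ell$-rational lift with matching stabiliser. This handles all but a finite residual list of pairs $(\bG,\ell)$ in which $\ell$ is bad for the exceptional component or divides $|(\bZ(\bG)/\bZ^\circ(\bG))^F|$: namely $\ell\in\{2,3\}$ for factors of type $G_2,F_4,E_6,E_7,\tw3D_4$, and $\ell\in\{2,3,5\}$ for $E_8$, each combined with a quasi-isolated $\ell'$-class.

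The main obstacle is this residual list. Here I would rely on Bonnaf\'e's classification of quasi-isolated semisimple classes in exceptional types together with the explicit description of the corresponding quasi-isolated $\ell$-blocks (their Lusztig series, unipotent Jordan correspondents, and basic sets) worked out by Kessar--Malle in \cite{KM15}. For each remaining pair $(s,\ell)$ one extracts from those tables either (i) two characters $\chi_1,\chi_2\in\cE(G,s)$ whose $\ell$-modular reductions $\chi_1^0,\chi_2^0$ are linearly independent and lie in distinct $\Aut(G)_B$-orbits, so that Lemma~\ref{lem:orbits} rules out $\IBr(B)$ being a single orbit; or (ii) the observation that $\cE(G,s)^0=\IBr(B)$ is a single $\Aut(G)_B$-orbit, in which case the elements of $\cE(G,s)$ furnish the $\ell$-rational lifts demanded by the theorem via Proposition~\ref{prop:l-rational}(b). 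The Suzuki and Ree types $\tw2B_2,\tw2G_2,\tw2F_4$ have trivial center and short lists of quasi-isolated classes, so can be checked directly from the known character tables; the delicate cases are $E_7$ at $\ell=2$ and $E_8$ at $\ell\in\{2,3\}$, where the large Lusztig series and the interaction with diagonal automorphisms require a careful orbit count.
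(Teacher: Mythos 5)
Your overall reduction (pass to the quasi-isolated Jordan correspondent via Lemma~\ref{lem:only qi}, then dispose of all good primes through Lemma~\ref{lem:basic set} and Lemma~\ref{lem:bs case}) coincides with the structure of the paper's proof, and your residual list of bad-prime, quasi-isolated cases is essentially the right one. The problem is what you propose to do with that residual list. You say that for each remaining pair one ``extracts from the tables'' either two characters $\chi_1,\chi_2\in\cE(G,s)$ whose reductions $\chi_1^0,\chi_2^0$ are linearly independent and lie in distinct $\Aut(G)_B$-orbits, or the equality $\cE(G,s)^0=\IBr(B)$. Neither piece of information is available in the sources you invoke: the Kessar--Malle tables (and the Hiss--Shamash, Deriziotis--Michler, Enguehard data for unipotent blocks) record block distributions and defect groups, not decomposition matrices, and at bad primes one does not know that $\cE(G,s)^0$ is linearly independent (the Geck--Hiss result \cite[Thm.~14.6]{CE} used elsewhere in the paper requires $\ell$ good). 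Moreover, even granting linear independence of the two individual reductions, this does not feed into Lemma~\ref{lem:orbits}: that lemma needs an $\Aut(G)_B$-\emph{invariant} set with linearly independent restrictions, i.e.\ you must control the full orbits, or show that the two orbit \emph{sums} have non-proportional restrictions to $\ell'$-elements -- which is strictly more than what you claim to read off.

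The missing idea, and the actual engine of the paper's proof in these residual cases, is the $a$-value/unipotent support argument: from the tables one can check that every quasi-isolated block of positive defect at a bad prime contains two characters with distinct $a$-values $a_1<a_2$, and then \cite[Thm.~3.7]{GM99} produces an $F$-stable unipotent class $\bC$ (whose points are $\ell$-regular since $\ell\ne p$) on which the average value of $\chi_2$ vanishes while that of $\chi_1$ does not. Because unipotent classes are stable under all automorphisms, the $\Aut(G)_B$-orbit sums $\psi_1,\psi_2$ inherit this vanishing/non-vanishing dichotomy, so $\psi_1^0,\psi_2^0$ are not proportional and Lemma~\ref{lem:orbits} applies; in particular your alternative (ii) never actually occurs. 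Your separate treatment of the Suzuki and Ree groups, and the reduction when the derived group is not simple (the $E_6+A_1$ configuration inside $E_8$, and splitting off $\tw3D_4$ factors), are consistent with the paper, but without the $a$-value mechanism the core of your argument for $G_2$, $F_4$, $E_6$, $E_7$, $E_8$ (and $\tw3D_4$ at $\ell=2$) does not go through as written.
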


\begin{proof}
By Lemma~\ref{lem:bs case} in conjunction with Lemma~\ref{lem:basic set} we
may assume that $\ell$ is bad for $\bG$. Moreover, by Lemma~\ref{lem:only qi}
the block $B$ is quasi-isolated in $G$. There is no bad non-defining
prime for Suzuki groups. For the Ree groups $^2G_2(3^{2n+1})$ the only relevant
bad prime is $\ell=2$, and the only quasi-isolated block is the principal
block by Ward \cite{Wa66}, so we are done in this case. The only bad
prime for $\tw2F_4(2^{2n+1})$ is $\ell=3$, and $s=1$ is the only quasi-isolated
semisimple $3'$-element. Here the unipotent blocks were determined in
\cite[Bem.~1]{MaF} and the tables in loc.\ cit.\ show that in each such block
of positive defect there are two $\Aut(G)$-invariant unipotent characters with
linearly independent restriction to $\ell'$-elements, so Lemma~\ref{lem:orbits}
allows to conclude.
\par
We are left to consider the exceptional groups of types $G_2$, $\tw3D_4$,
$F_4$, $E_6$, $\tw2E_6$, $E_7$ and $E_8$. Assume first that $\bG=\bH$. For
these groups the unipotent blocks at bad primes have been determined by
Hiss--Shamash, Deriziotis--Michler and Enguehard (see the tables in
\cite{En00}), while their other
quasi-isolated blocks can be found in Kessar--Malle \cite[Tables~1--8]{KM13}.
It is easily checked from these sources that each such block of non-zero defect
contains two characters $\chi_1,\chi_2$ with different $a$-values
$a_i=a(\chi_i)$ (in the sense of \cite[\S3B]{GM99}), with $a_1<a_2$ say.
It then follows by
\cite[Thm.~3.7]{GM99} that there exists an $F$-stable unipotent conjugacy class
$\bC\subset\bG$ such that the variety of Borel subgroups of $\bG$ containing a
given $u\in \bC$ has dimension~$a_1$ with the following property: the average
value of $\chi_2$ on $\bC^F$ is zero, while it is non-zero for $\chi_1$. Since
the unipotent classes of $\bG$ are invariant under all automorphisms of $\bG$,
and thus their $F$-fixed points $\bC^F$ are invariant under all automorphisms
of $G$, the orbit sums $\psi_1,\psi_2$ of $\chi_1$ and $\chi_2$ under
$\Aut(G)_B$ still have the same vanishing respectively non-vanishing property
on $\bC^F$. In particular $\psi_1^0,\psi_2^0$ are linearly independent. Thus,
by Lemma~\ref{lem:orbits} there are at least two orbits of $\Aut(G)_B$ on
$\IBr(B)$. (For example, in many cases, we can take for $\chi_1$ the semisimple
character and for $\chi_2$ the regular character in $\cE(G,s)$.)
\par
Finally assume that $[\bG,\bG]$ is not simple, but has some exceptional
component
or a component with $F$-fixed points of type $\tw3D_4$. In the latter case,
since $\tw3D_4(q)$ has trivial Schur multiplier, $G$ splits into a direct
product, and since the claim holds for the factors, it clearly also holds
for $G$. The only possibility in the former case is that $\bG$ has type
$E_6+A_1$ inside $E_8$, and $\ell\in\{2,3\}$. Note that then $\bG$ has connected
center, so the quasi-isolated elements are just the isolated elements in the
component of type $E_6$. For those we had already argued that all $\ell$-blocks
$B$ not of defect zero contain at least two $\Aut(G)_B$-orbits on $\Irr(B)$
with linearly independent restrictions to $\ell'$-classes, so we can conclude
as before.
\end{proof}

Together, Propositions~\ref{prop:type A}--\ref{prop:exc type} cover all cases
and thus the proof of Theorem~\ref{thm:levis} is complete.
Note that here we find examples of blocks of arbitrarily high defect. Indeed,
if $s\in G^*$ is a regular semisimple $\ell'$-element lying in the unique
maximal torus $T^*$ then the block containing the corresponding
Deligne--Lusztig character is covered by a nilpotent block of $\tilde G$ and
has defect at least $|T^*|_\ell$, which can be an arbitarily high power of
$\ell$.

\subsection{Extendibility}
We now turn to the proof of Theorem~\ref{thm:quasisimples}(4) in our situation
and assume in the following that $\ell=2$.
Let $\bG$ be simple, simply connected, and $\bG\hookrightarrow\tilde\bG$ a
regular embedding, so $\tilde\bG$ has connected center and
$\tilde\bG=\bZ(\tilde\bG)\bG$, with compatible Frobenius map $F$. Recall that
the automorphism group of $G:=\bG^F$ has a subgroup $\Diag(G)$ consisting of
automorphism induced by elements of $\tilde G:=\tilde\bG^F$

\begin{prop}   \label{prop:does extend}
 Let $G,B,\chi$ be as in Theorem~\ref{thm:levis}, $\ell=2$, and let $P$ be a
 Sylow $2$-subgroup of $\Aut(G)_\chi$. If the Sylow $2$-subgroups of
 $\Aut(G)_{\chi}/\Diag(G)_\chi$ are cyclic (which is the case whenever
 $G\notin \{\SL_n(q), D_n(q),E_6(q)\}$),
 then $\chi$ extends to some $\tilde \chi\in \Irr(G\rtimes P)$ such that
 $\ker(\chi_{\cent {G\rtimes P} G})$ has odd index in $\cent{G\rtimes P} G$.
\end{prop}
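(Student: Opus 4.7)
The plan is to construct $\tilde\chi$ in two stages reflecting the decomposition of $\Aut(G)_\chi$ into the diagonal part $\Diag(G)_\chi$ and the cyclic Sylow $2$-quotient on top, and then to translate this into the required extension of $G\rtimes P$ via Lemma~\ref{extendingclean}.

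For the first stage I would invoke the standard extendibility theorem in Lusztig theory (Lusztig, cf.~Digne--Michel): since $\chi\in\cE(G,s)$ for some semisimple $2'$-element $s\in\bG^{*F}$, $\chi$ extends to its stabilizer $\tilde G_\chi$ in $\tilde G=\tilde\bG^F$. Combining this with the $2$-rationality of $\chi$ from Theorem~\ref{thm:levis} and Proposition~\ref{prop:l-rational}(a), I would choose the extension $\hat\chi\in\Irr(\tilde G_\chi)$ to be $2$-rational (multiplying by an odd-order linear character of $\tilde G_\chi/G$ if necessary).

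For the second stage, by the cyclic hypothesis I select a cyclic subgroup $C_0\le P$ whose image in $\Aut(G)_\chi/\Diag(G)_\chi$ realizes a Sylow $2$-subgroup. The elements of $C_0$ extend to automorphisms of $\tilde G$ and stabilize $\hat\chi$, so the classical result that invariant characters extend through cyclic quotients yields an extension of $\hat\chi$ to $\tilde G_\chi\langle C_0\rangle$. This provides, in an ambient group $\hat G$ with $G\lhd\hat G$, an extension of $\chi$ to a subgroup $GP^*$ where $P^*$ is a $2$-subgroup acting on $G$ as $P$. After passing to the quotient $\hat G/\bZ(G)_2$, using that $\bZ(G)_2\le\ker(\chi)$ by $2$-rationality, the centralizer hypothesis $\cent{P^*}G=1$ of Lemma~\ref{extendingclean} becomes valid. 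That lemma then translates the extension on $GP^*$ into the required $\tilde\chi\in\Irr(G\rtimes P)$ satisfying $2\nmid|\cent{G\rtimes P}G:\ker(\tilde\chi|_{\cent{G\rtimes P}G})|$.

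The main obstacle will be the second stage: ensuring that the $2$-rational extension $\hat\chi$ can be chosen $C_0$-invariant. Unlike for odd $p$, where Isaacs~\cite[Thm.~(6.30)]{Isa} provides a canonical $p$-rational extension that is automatically invariant under any stabilizing automorphism, for $p=2$ several $2$-rational extensions may exist and $C_0$ can a priori permute them. The exclusion of $G\in\{\SL_n(q),D_n(q),E_6(q)\}$ is exactly what makes the outer structure of $\tilde G_\chi/G$ controllable enough to force a canonical choice; for the remaining quasi-simple groups the relevant component group is sufficiently small that the obstruction to $C_0$-equivariance vanishes by a direct cohomological check.
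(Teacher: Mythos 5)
There is a genuine gap, and it sits exactly where you yourself locate "the main obstacle": the $C_0$-invariance of the chosen extension is never proved. Your Stage~1 extends $\chi$ to its full inertia group $\tilde G_\chi$ in $\tilde G$, and Stage~2 then needs that extension to be stable under a Sylow $2$-subgroup of the field/graph part; you defer this to "a direct cohomological check," but no such check is exhibited, and it is not routine — equivariance of extensions to $\tilde G_\chi$ under outer automorphisms is precisely the delicate point (the paper has to invoke the equivariance theorem of Cabanes--Sp\"ath \cite[Thm.~4.1]{CS15} when it cannot avoid it, in the $\SL_n(q)$ case treated in Proposition~\ref{prop:typeA}). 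The paper's proof of the present proposition sidesteps the issue by a different device: it does not extend to $\tilde G_\chi$ at all, but only to $GA$ with $A\in\Syl_2(\tilde\bG^F)$, where by \cite[Prop.~1.3]{CE99} restriction gives a bijection $\cE(GA,2')\to\cE(G,2')$, so $\chi$ has a \emph{unique} extension $\tilde\chi\in\cE(GA,2')$; this canonicity makes $\tilde\chi$ automatically $\Aut(G)_\chi$-stable, and only then is the cyclicity hypothesis used, in the standard way, to extend the invariant $\tilde\chi$ over the cyclic $2$-group $D$ of field/graph automorphisms. Your reading of the hypothesis is correspondingly off: excluding $\SL_n(q)$, $D_n(q)$, $E_6(q)$ is not what "forces a canonical choice" of diagonal extension (canonicity comes from the Lusztig-series bijection), it is only what makes $D$ cyclic so that the last extension step is free.

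Two further points. First, your justification that $\bZ(G)_2\le\ker(\chi)$ "by $2$-rationality" is not valid: a $2$-rational character can take the value $-\chi(1)$ on a central involution. The correct reason is that $\chi\in\cE(G,2')$, so its central character corresponds to a semisimple element of odd order and hence is trivial on the $2$-part of the center; the paper uses exactly this ("by construction $\ker(\tilde\chi)$ contains the Sylow $2$-subgroup of $\bZ(GA)$"). Second, the final translation to $G\rtimes P$ via Lemma~\ref{extendingclean} is in the right spirit, but you must, as the paper does, first pass to $\overline G=G/\bZ(G)_2$ and check that the centralizer of $\overline G$ in the ambient group $(GA)D/\tilde Z$ is a $2'$-group before the lemma applies; with your ambient group $\tilde G_\chi\langle C_0\rangle$ this verification is also missing. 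Until the invariance step is supplied (either by the canonical-extension argument via \cite[Prop.~1.3]{CE99} or by an explicit equivariance theorem), the proof is incomplete.
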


\begin{proof}
Let $A$ be a Sylow $2$-subgroup
of $\tilde \bG^F$. Now by \cite[Prop.~1.3]{CE99}, restriction induces a
bijection between $\cE(GA,2')$ and $\cE(G,2')$. Hence $\chi^{GA}$ has
a unique constituent $\tilde\chi$ in $\cE(GA,2')$, which then must be
$\Aut(G)_\chi$-stable as well, with respect to the natural action of $\Aut(G)$
on $GA$. By construction $\ker(\tilde \chi)$ contains the Sylow
$2$-subgroup
of $\bZ(GA)$. Any Sylow $2$-subgroup $D$ of $\Aut(G)_\chi/\Inn(GA)$ is
isomorphic to a group generated by field and graph automorphisms of $G$ that
act faithfully on $GA$. By our assumption on $\Aut(G)_\chi/\Diag(G)_\chi$,
$D$ is cyclic. Hence $\tilde\chi$ extends further to $(GA)D$.
By our construction $\cent{(GA)D}G$ is contained in $\bZ(\tilde G)$ and the
$2$-part $\tilde Z$ of $\bZ(\tilde G)$ is contained in $\ker(\tilde\chi)$.

In order to now prove the extendibility of $\chi$ to $G\rtimes P$ we apply
Lemma \ref{extendingclean}: Let $\overline G:=G/\bZ(G)_2$, where
$\bZ(G)_2$ is the Sylow $2$-subgroup of $\bZ(G)$.
Then the above implies that $\chi$ seen as a character of $\overline G$
extends to $(GA)D/\tilde Z$. Now the group $\cent {(GA)D/\tilde Z}{\overline G}$
is an $2'$-group, and Lemma~\ref{extendingclean} proves that
$\chi$ as a character of $\overline G$ extends to $\overline G\rtimes P$.
Since  $\Aut(G)=\Aut({\overline G})$, $P$ naturally acts on $G$ and
this implies the extendibility part of the statement.

Note that since $D$ is isomorphic to a group generated by field and graph
automorphisms, $D$ is cyclic if $G\notin\{\SL_n(q),D_n(q),E_6(q)\}$.
\end{proof}

It remains to deal with the small prime cases left open in
Proposition~\ref{prop:does extend}.

\begin{prop}   \label{prop:typeA}
 Let $G,B,\chi$ be as in Theorem~\ref{thm:levis}. Assume that $\ell=2$ and
 $G=\SL_n(q)$ for some $n\ge3$ and some odd prime power $q$. Let $P$ be a
 Sylow $2$-subgroup of $\Aut(G)_\chi$. Then $\chi$ extends to some character
 $\tilde\chi$ of $G\rtimes P$ such that
 $\ker(\tilde \chi|_{\cent{G\rtimes P}{G}})$ has odd index in
 $\cent{G\rtimes P}{G}$.
\end{prop}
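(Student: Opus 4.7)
\medskip

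\noindent\textbf{Proof plan.}
The plan is to follow the strategy of Proposition~\ref{prop:does extend}, the extra difficulty being that for $G=\SL_n(q)$ with $n\ge3$ the Sylow $2$-subgroup of $\Aut(G)_\chi/\Diag(G)_\chi$ need not be cyclic: writing $q=p^f$ with $f$ even and assuming that $\chi$ is invariant under both the graph automorphism $\gamma$ and a field automorphism $F_0$ of $2$-power order, that quotient can be of the form $C_{2^a}\times C_2$ with $a\ge 1$, which has non-trivial Schur multiplier.

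First, I would carry out the reduction used in Proposition~\ref{prop:does extend}. Embed $\bG=\SL_n$ into a regular embedding $\tilde\bG=\GL_n$, set $\tilde G=\tilde\bG^F$, and let $A$ be a Sylow $2$-subgroup of $\tilde\bG^F$, so that $GA\le \tilde G$. By \cite[Prop.~1.3]{CE99}, restriction gives a bijection $\cE(GA,2')\to\cE(G,2')$, and since $\chi\in\cE(G,2')$ by Theorem~\ref{thm:levis} there is a unique $\hat\chi\in\cE(GA,2')$ lying above $\chi$. By the same uniqueness, $\hat\chi$ is $\Aut(G)_\chi$-invariant and the $2$-part of $\bZ(GA)$ lies in $\ker(\hat\chi)$. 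One more application of the same bijection produces a $\tilde G$-character $\tilde\chi\in\cE(\tilde G,2')$ lying above $\hat\chi$.

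The crux is to extend $\hat\chi$ to $(GA)D$, where $D$ is a Sylow $2$-subgroup of $\Aut(G)_\chi/\Inn(GA)$. When $D$ is cyclic the extension exists automatically and the argument of Proposition~\ref{prop:does extend} applies verbatim. In the remaining case $D\cong C_{2^a}\times C_2$ with $a\ge 1$ the obstruction lies in $H^2(D,\CC^\times)\cong C_2$, and this is where the main work lies. To annihilate this obstruction I would move up to the connected-centre group $\tilde G$: the character $\tilde\chi\in\cE(\tilde G,2')$ admits an extension to its full stabiliser in $\tilde G\rtimes\langle F_0,\gamma\rangle$, by the extendibility results available in type~$A$ for $\ell'$-series of groups with connected centre (these are precisely the statements used in the verification of the inductive McKay condition for type $A$ at the prime~$2$). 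Restricting such an extension back to $(GA)D$ furnishes the required extension of $\hat\chi$.

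Once $\hat\chi$ is extended to $(GA)D$, I would conclude exactly as in Proposition~\ref{prop:does extend}: pass to $\overline G=G/\bZ(G)_2$, note that the $2$-part $\tilde Z$ of $\bZ(\tilde G)$ lies in the kernel of the extension so one obtains a character of $(GA)D/\tilde Z$, observe that $\cent{(GA)D/\tilde Z}{\overline G}$ is a $2'$-group, and invoke Lemma~\ref{extendingclean} to produce an extension of $\chi$ (seen as a character of $\overline G$) to $\overline G\rtimes P$. Since $\Aut(G)=\Aut(\overline G)$ the group $P$ acts naturally on $G$ and this yields the desired $\tilde\chi\in\Irr(G\rtimes P)$ with the stated kernel condition on $\cent{G\rtimes P}{G}$. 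The main obstacle in the whole argument is the Schur-multiplier obstruction that arises when $D$ is non-cyclic; this is precisely the point where, compared to the previous proposition, the passage through the connected-centre ambient group $\tilde G$ has to be used in an essential way.
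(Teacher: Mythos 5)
There is a genuine gap at the crux of your plan, namely the step ``restricting such an extension back to $(GA)D$ furnishes the required extension of $\hat\chi$''. The character $\tilde\chi\in\cE(\tilde G,2')$ lying above $\hat\chi$ does not in general restrict irreducibly to $GA$: in the relevant blocks $\chi$ is typically \emph{not} $\tilde G$-invariant (indeed $\cE(\tilde G,\tilde s)=\{\tilde\chi\}$ while $\cE(G,s)$ may consist of several $\Aut(G)_B$-conjugate constituents of $\tilde\chi|_G$), so $\tilde\chi|_{GA}$ is a multiplicity-free sum of several $\tilde G$-conjugates of $\hat\chi$. Consequently the restriction to $(GA)D$ of any extension of $\tilde\chi$ has degree $\tilde\chi(1)>\hat\chi(1)$ and is not an extension of $\hat\chi$; some Clifford-theoretic descent from the connected-centre group is needed, and that descent is precisely the hard point of the proposition — quoting ``the extendibility statements used for the inductive McKay condition in type $A$'' at this spot is essentially assuming the difficult part. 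A second, smaller omission: non-cyclicity of a Sylow $2$-subgroup of $\Aut(G)_\chi/\Diag(G)_\chi$ only says that the $\tilde\bG^F$-orbit of $\chi$ is stable under $\gamma$ and a field automorphism of even order; to assume, as you do, that $\chi$ itself is $\langle F_0,\gamma\rangle$-invariant one needs the equivariance theorem \cite[Thm.~4.1]{CS15}, which should be invoked explicitly.

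For comparison, the paper's proof bridges exactly these two gaps. It first uses \cite[Thm.~4.1]{CS15} to replace $\chi$ by a $D_2=\langle F_0,\gamma\rangle$-invariant character in its $\tilde\bG^F$-orbit. Then, since $\tilde\bG^F_\chi/GA$ has odd order and $D_2$ is a $2$-group, the character of $GA$ in $\cE(GA,2')$ admits a $D_2$-invariant extension to $\tilde\bG^F_\chi$ (via \cite[Lemma~13.8]{Isa}); the induced character of $\tilde\bG^F$ is extended to $\tilde\bG^F\langle F_0\rangle$ by Bonnaf\'e's Shintani-descent result \cite[Lemma~4.3.2]{Cedric}, which produces an extension $\psi$ with $\psi(F_0)\neq0$ — this non-vanishing forces $\psi^\gamma=\psi$, so $\psi$ extends over the remaining cyclic quotient of order~$2$, killing the Schur-multiplier obstruction you identified without ever quoting extendibility over $\tilde G\rtimes D$. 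A degree count then descends this to an extension of $\chi$ to $(\tilde\bG^F D_2)_\chi$ whose kernel contains a Sylow $2$-subgroup of the centraliser of $G$, and Lemma~\ref{extendingclean} concludes as in Proposition~\ref{prop:does extend}. Your reduction set-up and your final appeal to Lemma~\ref{extendingclean} agree with the paper; what is missing is the mechanism (invariance via \cite{CS15}, odd-order invariant extension, Shintani descent, degree count) that actually produces the extension over the non-cyclic group $D$.
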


\begin{proof}
By Proposition~\ref{prop:does extend} we may assume that
$\Aut(G)_\chi/\Diag(G)_\chi$ is not cyclic. Hence we can assume that
the $\tilde \bG^F$-orbit of $\chi$ is invariant under the graph automorphism
$\gamma$ and a field automorphism of even order. Let $D_2$ be a Sylow
$2$-subgroup of the stabiliser of this orbit in the group generated by
field automorphisms and $\gamma$. Let $F_0$ be a field automorphism such that
$D_2=\langle F_0,\gamma\rangle$. According to \cite[Thm.~4.1]{CS15} some
character in the $\tilde \bG^F$-orbit of $\chi$ is $D_2$-invariant. We may
assume that $\chi$ is $D_2$-invariant. The character
$\tilde\chi\in\cE(GA,2')$ from Proposition~\ref{prop:does extend}
extends to $\tilde \bG^F_\chi$ and is $D_2$-invariant. Since the quotient
$\tilde \bG^F_\chi/GA$ has odd order there exists a $D_2$-invariant extension
$\widehat \chi$ of $\tilde \chi$ to $\tilde\bG^F_\chi$ according to (some easy
application of) \cite[Lemma~13.8]{Isa}.
Now $\widehat \chi^{\tilde \bG^F}$ extends to $\tilde \bG^F D_2$, since by
\cite[Lemma~4.3.2]{Cedric} there exists an extension $\psi$ of
$\widehat \chi^{\tilde \bG^F}$ to $\tilde \bG^F \langle F_0 \rangle$ such
that $\psi(F_0)\neq 0$. Accordingly $\psi^\gamma=\psi$  and $\psi$ extends to
$\tilde \bG^F\langle F_0,\gamma \rangle$.
By considering the degrees this proves that $\chi$ extends to some
$\tilde \chi\in \Irr((\tilde \bG^F D_2)_\chi)$ such that $\ker(\tilde \chi)$
contains a Sylow $2$-subgroup $\tilde Z$ of $\cent{(\tilde \bG^F D_2)_\chi}G$.

Now applying Lemma \ref{extendingclean} as in the proof of
Proposition~\ref{prop:does extend} implies the extensibility part of the
statement.
\end{proof}

Alternatively, the previous result could be proved by using Deligne--Lusztig
characters for disconnected groups as introduced by Digne--Michel.

\begin{prop}
 Let $G,B,\chi$ be as in Theorem~\ref{thm:levis}.  Assume that $\ell=2$ and
 $G\in\{D_n(q),E_6(q)\}$. Let $P$ be a Sylow $2$-subgroup of $\Aut(G)_\chi$.
 Then $\chi$ extends to some $\tilde \chi\in \Irr(G\rtimes P)$ such that
 $\ker(\widetilde\chi|_{\cent {G\rtimes P}G})$ has odd index in
 $\cent{G\rtimes P}G$.
\end{prop}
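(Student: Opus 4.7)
The plan is to imitate essentially verbatim the argument of Proposition~\ref{prop:typeA}, only adjusting for the fact that the graph automorphism of $D_n(q)$ (resp.\ $E_6(q)$) enters into the Sylow $2$-subgroup of $\Aut(G)_\chi/\Diag(G)_\chi$ in a slightly different way. First I would reduce via Proposition~\ref{prop:does extend} to the case where $\Aut(G)_\chi/\Diag(G)_\chi$ has non-cyclic Sylow $2$-subgroups. In both types, $\Out(G)/\Diag(G)$ is generated by field automorphisms together with a single graph automorphism $\gamma$ of order~$2$ (the triality of $D_4(q)$ has order~$3$ and so does not enter the Sylow~$2$-subgroup), so non-cyclicity forces the $\tilde\bG^F$-orbit $\cO_\chi$ of $\chi$ to be stable under $\gamma$ and under some field automorphism of positive $2$-adic valuation. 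I would then pick such an $F_0$ of $2$-power order generating a Sylow $2$-subgroup of the field-automorphism part of the stabiliser of $\cO_\chi$, and set $D_2:=\langle F_0,\gamma\rangle$.

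Next, I would invoke the analogue of \cite[Thm.~4.1]{CS15} for types $D_n$ and $E_6$ (the ``$A(\infty)$''-type equivariance results of Cabanes--Sp\"ath for these types) to replace $\chi$, within its $\tilde\bG^F$-orbit, by a $D_2$-invariant representative. The character $\tilde\chi\in\cE(GA,2')$ attached to $\chi$ in the proof of Proposition~\ref{prop:does extend} then extends to $\tilde\bG^F_\chi$ and remains $D_2$-invariant, and since $|\tilde\bG^F_\chi:GA|$ is odd, \cite[Lemma~13.8]{Isa} provides a $D_2$-invariant extension $\widehat\chi$ of $\tilde\chi$ to $\tilde\bG^F_\chi$.

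I would then produce an extension of $\widehat\chi^{\tilde\bG^F}$ to $\tilde\bG^F D_2$ in two steps: first extend to $\tilde\bG^F\langle F_0\rangle$ by appealing to Deligne--Lusztig theory on disconnected groups in the form \cite[Lemma~4.3.2]{Cedric}, obtaining an extension $\psi$ with $\psi(F_0)\ne 0$; the non-vanishing forces $\psi^\gamma=\psi$, so that $\psi$ extends further to $\tilde\bG^F\langle F_0,\gamma\rangle=\tilde\bG^F D_2$. Comparing degrees shows that $\chi$ itself extends to some character of $(\tilde\bG^F D_2)_\chi$ whose kernel contains a Sylow $2$-subgroup of $\cent{(\tilde\bG^F D_2)_\chi}{G}$. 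Applying Lemma~\ref{extendingclean} exactly as at the end of Propositions~\ref{prop:does extend} and~\ref{prop:typeA} then yields the desired extension of $\chi$ to $G\rtimes P$ with the required property on $\cent{G\rtimes P}{G}$.

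The principal obstacle will be the $D_2$-stable representative step: the analogue of \cite[Thm.~4.1]{CS15} is substantially more delicate in type $D$ (especially for even $n$, where $\Diag(G)$ is non-cyclic, and where graph and diagonal automorphisms do not commute in the naive way) and in type $E_6$ (where the order-$3$ centre of $\bG$ interacts non-trivially with $\gamma$). Once the correct $(D_2$-)equivariance statement for these two types is in hand, the remainder of the argument is formally identical to the proof of Proposition~\ref{prop:typeA}.
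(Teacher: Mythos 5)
Your plan stands or falls on the step you yourself flag as the principal obstacle: producing a $D_2$-invariant character in the $\tilde\bG^F$-orbit of $\chi$. The result you invoke for this, \cite[Thm.~4.1]{CS15}, is proved only for type $A$, and no analogue for types $D_n$ or $E_6$ is supplied by your argument nor available off the shelf; so the proposal defers exactly the hard content instead of proving it. The subsequent appeal to \cite[Lemma~4.3.2]{Cedric} has the same defect: Bonnaf\'e's non-vanishing statement for an extension of $\psi$ to $\tilde\bG^F\langle F_0\rangle$ belongs to the type-$A$ Shintani-descent setting exploited in Proposition~\ref{prop:typeA} and cannot simply be quoted for $D_n$ or $E_6$ (where, as you note, $\Diag(G)$ may be non-cyclic and the graph automorphism interacts differently with diagonal automorphisms). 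As written, the argument is therefore incomplete at its crucial point, and completing it would amount to solving an equivariance problem substantially harder than the proposition itself.

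The paper takes a different and much softer route which avoids any such equivariance statement: under the hypotheses of Theorem~\ref{thm:levis} with $\ell=2$ the block $B$ is forced to be nilpotent in these cases. For $D_n(q)$, the argument of Proposition~\ref{prop:classical} gives $\Irr(B)=\cE_2(G,s)$ and a Morita equivalence of $B$ with the principal block of $\bG_1^F$, where $\bG_1$ is dual to the Levi subgroup $C_{\bG^*}(s)$; since that block contains all unipotent characters of $\bG_1^F$, a single $\Aut(G)_B$-orbit on $\IBr(B)$ forces $\bG_1$ to be a torus, so $B$ is nilpotent and the required extension, including the kernel condition on $\cent{G\rtimes P}G$, follows from Proposition~\ref{prop:nilpotent}. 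For $E_6(q)$, when Proposition~\ref{prop:does extend} does not apply, either $C_{\bG^*}(s)$ lies in a proper $F$-stable Levi subgroup (all of whose components are classical with connected centre), which again forces $C_{\bG^*}(s)$ to be a torus and $B$ nilpotent, or $s$ is quasi-isolated, in which case inspection of \cite[Tab.~3]{KM13} yields two characters with distinct $a$-values in $\Irr(B)\cap\cE(G,s)$, so that by the argument of Proposition~\ref{prop:exc type} the hypothesis on $\IBr(B)$ fails. You should argue along these structural lines rather than trying to transplant the type-$A$ extension machinery.
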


\begin{proof}
First assume that $\bG$ has type $D_n$ with untwisted Frobenius map.
Let $s\in G^*$ be a semisimple $2'$-element with $\Irr(B)\subseteq\cE_2(G,s)$.
Then as shown in the proof of Proposition~\ref{prop:classical}, $B$ is
Morita equivalent to the principal block $B_1$ of $\bG_1$, where $\bG_1$ is
dual to the Levi subgroup $C_{\bG^*}(s)$, and
$\cE(\bG_1^F,1)\subseteq\Irr(B_1)$. Thus, $B_1$ contains all unipotent
characters of $\bG_1^F$, hence it will satisfy our assumptions only if the
trivial character is the only unipotent character of $\bG_1^F$, hence if
$\bG_1$ is a (maximal) torus.
Since $\bG_1$ is abelian, every block of $\bG_1^F$ is nilpotent. Accordingly
the block $B$ is nilpotent and the statement follows from
Proposition~\ref{prop:nilpotent}.
\par
Now let $\bG$ be of type $E_6$ with untwisted Frobenius map and assume that
Proposition \ref{prop:does extend} does not apply.
This forces $\Aut(G)_B$ to contain graph and field automorphisms of order~2.
Let $s\in G^*$ be a semisimple $2'$-element with $\Irr(B)\subseteq\cE_2(G,s)$.
If $C_{\bG^*}(s)$ is contained in a proper $F$-stable Levi subgroup $\bG_1^*$,
then all components of $\bG_1$ are of classical type, the center of $\bG_1$
is connected, and thus $\Irr(B)=\cE_2(G,s)$. We see that then our assumptions
imply that $C_{\bG^*}(s)$ is a torus. This implies again that $B$ is nilpotent,
and the statement follows from Proposition~\ref{prop:nilpotent}.  \par
Thus we may assume that $s$ is quasi-isolated, hence as in \cite[Tab.~3]{KM13}.
Inspection shows that unless $B$ is of defect~0 there always exist characters
with distinct $a$-values in $\Irr(B)\cap\cE(G,s)$, and hence $B$ does not
satisfy our assumptions.
\end{proof}

This completes the proof of Theorem~\ref{thm:quasisimples}.


\section{$p$-rational lifts}   \label{prational}

From now we (again) consider $p$-blocks or $p$-Brauer characters for some
odd prime $p$. In this section we show in Theorem \ref{thm:prationalisom}
that for an odd prime $p$ in some situations $p$-rationality is preserved by
isomorphisms of character triples. This ensures that in the proof of the
Main Theorem the lift can be chosen $p$-rational.

Recall that $\chi \in \Irr(G)$ is \emph{$p$-rational} if the values of $\chi$
lie in some cyclotomic field $\QQ_m:=\QQ(\zeta)$, for some $m$th root of
unity $\zeta$ of order not divisible by $p$.
By elementary character theory, this happens if the values of $\chi$ lie in
$\QQ_{|G|_{p'}}$. If $\chi$ is $p$-rational, it is non-trivial to prove that
$\chi$ can be afforded by an absolutely irreducible
$\QQ_m$-representation (see \cite[Thm.~(10.13)]{Isa}.)

\medskip
If $N\nor G$ and $\theta\in\Irr(N)$ is $p$-rational it is not true in
general that there exists a $p$-rational irreducible character
$\chi \in \Irr(G)$ over $\theta$, even if $G/N$ is a $p'$-group. We start
with the following.

\begin{thm}   \label{thm:overpratp'}
 Suppose that $N \nor G$ with $G/N$ a $p'$-group for $p$ odd. Assume
 that $\theta \in \Irr(N)$ is $p$-rational such that $\theta^0 \in \IBr(N)$.
 Then every $\chi \in \Irr(G)$ over $\theta$ is $p$-rational.
\end{thm}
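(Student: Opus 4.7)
The plan is to reduce via the Clifford correspondence to the case where $\theta$ is $G$-invariant, and then to show that every $\chi\in\Irr(G|\theta)$ is fixed by each Galois automorphism $\sigma$ acting trivially on $p'$-th roots of unity. Setting $T:=G_\theta$, the Clifford correspondence $\psi\mapsto\psi^G$ is a Galois-equivariant bijection $\Irr(T|\theta)\to\Irr(G|\theta)$ that preserves $p$-rationality (values of $\psi^G$ lie in the same cyclotomic field as those of $\psi$, by the explicit induction formula). Since $T/N$ is still a $p'$-group, I may replace $G$ by $T$ and assume that $\theta$ is $G$-invariant.

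The heart of the argument is to construct an extension $\hat\theta\in\Irr(G)$ of $\theta$ whose Brauer restriction $\hat\theta^0$ lies in $\IBr(G)$. Since $\theta^0\in\IBr(N)$ is $G$-invariant and $G/N$ is a $p'$-group, a standard extendibility result yields an extension of $\theta^0$ to a Brauer character of $G$. Now the ordinary obstruction $[\alpha_\theta]\in H^2(G/N,\CC^*)$ is annihilated by $|G/N|$, hence is $p'$-torsion; reduction modulo the fixed maximal ideal $M$ identifies the $p'$-torsion of $\CC^*$ with $\bar\FF_p^*$, and under this identification $[\alpha_\theta]$ corresponds to the (vanishing) modular extension obstruction. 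Therefore $\theta$ extends to some $\hat\theta\in\Irr(G)$, and because $\hat\theta^0|_N=\theta^0$ is irreducible and $G$-invariant, Clifford's theorem for Brauer characters forces $\hat\theta^0\in\IBr(G)$.

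Applying Gallagher's theorem in both the ordinary and modular settings gives
\[
\Irr(G|\theta)=\{\hat\theta\cdot\tau : \tau\in\Irr(G/N)\}, \qquad
\IBr(G|\theta^0)=\{\hat\theta^0\cdot\mu : \mu\in\IBr(G/N)\}.
\]
Because $G/N$ is a $p'$-group, $\Irr(G/N)$ coincides with $\IBr(G/N)$ and every such character is $p$-rational. Consequently the assignment $\chi=\hat\theta\tau\mapsto\hat\theta^0\tau=\chi^0$ is a bijection $\Irr(G|\theta)\to\IBr(G|\theta^0)$.

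To conclude, fix a Galois automorphism $\sigma$ acting trivially on all $p'$-th roots of unity. Then $\chi^\sigma\in\Irr(G|\theta)$ (since $\theta^\sigma=\theta$) and $(\chi^\sigma)^0=\chi^0$, because the Brauer character $\chi^0$ takes values in $p'$-th roots of unity and is therefore $\sigma$-fixed. Injectivity of the above bijection forces $\chi^\sigma=\chi$, so $\chi$ is $p$-rational. The main technical obstacle is producing the extension $\hat\theta$ with $\hat\theta^0\in\IBr(G)$: this is precisely where the hypothesis $\theta^0\in\IBr(N)$ is used, via the cohomological identification of the ordinary and modular extension obstructions.
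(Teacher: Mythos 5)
Your reduction to $G$-invariant $\theta$ and your closing Galois argument are fine, but the central step --- producing an extension $\hat\theta\in\Irr(G)$ of $\theta$ with $\hat\theta^0\in\IBr(G)$ --- is exactly where the proof breaks. The claim that a $G$-invariant $\theta^0\in\IBr(N)$ extends to a Brauer character of $G$ merely because $G/N$ is a $p'$-group is not a standard result and is false: the modular obstruction lies in $H^2(G/N,F^\times)$, where $F$ is the algebraically closed residue field of characteristic $p$; since $F^\times$ contains all $p'$-roots of unity and $|G/N|$ is prime to $p$, this group is the full Schur multiplier of $G/N$ and need not vanish. Concretely, take $p=3$, $G=Q_8$, $N=\bZ(G)$ and $\theta$ the faithful linear character of $N$: then $\theta$ is $3$-rational, $\theta^0\in\IBr(N)$, and $G/N\cong C_2\times C_2$ is a $3'$-group, yet neither $\theta$ nor $\theta^0$ extends to $G$ (the unique member of $\Irr(G\mid\theta)$ has degree $2$; it is of course $3$-rational, so the theorem survives, but your Gallagher parametrisation of $\Irr(G\mid\theta)$ simply does not exist). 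Your cohomological remark is correct as far as it goes --- the ordinary obstruction is $p'$-torsion and corresponds to the modular obstruction under reduction modulo $M$, so $\theta$ extends if and only if $\theta^0$ does --- but it cannot be used to conclude that either obstruction vanishes; invoking the ``vanishing modular obstruction'' at that point makes the argument circular.

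The paper sidesteps the extension problem by localising at one element: to show $\chi(x)\in\QQ_{|G|_{p'}}$ one may replace $G$ by $\langle N,x\rangle$, since $\chi(x)$ is a sum of values at $x$ of irreducible characters of this subgroup lying over $\theta$; thus one may assume $G/N$ is cyclic. There $\theta$ does extend, so $\chi_N=\theta$, and for $\sigma\in\Gal(\QQ_{|G|}/\QQ_{|G|_{p'}})$ Gallagher gives $\chi^\sigma=\la\chi$ with $\la\in\Irr(G/N)$; since $\chi^0\in\IBr(G)$ (its restriction to $N$ being $\theta^0$) has values fixed by $\sigma$, one gets $\la^0\chi^0=\chi^0$, hence $\la^0=1$ by Gallagher for Brauer characters, and $\la=1$ because every coset of $N$ contains a $p$-regular element. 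If you wish to keep your global Gallagher strategy you must first carry out such a reduction (or otherwise dispose of the obstruction); as written, the proof has a genuine gap.
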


\begin{proof}
We may assume that $\theta$ is $G$-invariant.
Let $\chi \in \irr{G|\theta}$ and $x \in G$. Let $m=|G|_{p'}$. We want to show
that $\chi(x) \in \QQ_m$. We may assume that $G/N$ is generated by $N$ and
$x$. Hence $\chi_N=\theta$. Now let $\sigma \in \Gal(\QQ_{|G|}/\QQ_m)$.
We have that $\chi^\sigma=\lambda \chi$, for some $\lambda \in \irr{G/N}$.
Now, $(\chi_N)^0=\theta^0 \in \IBr(N)$. Thus $\chi^0=\vhi \in \IBr(G)$,
and $\vhi_N=\theta^0$. Now, $\vhi^\sigma=\vhi$. Thus $(\chi^\sigma)^0=\chi^0$.
Hence $\lambda^0\chi^0=\chi^0$. By Gallagher's Lemma for Brauer characters,
$\lambda^0=1$, and therefore $\lambda=1$.
\end{proof}

\begin{cor}   \label{cor:overprat}
 Suppose that $G/N$ has a normal Sylow $p$-subgroup $P/N$. Let
 $\theta \in \Irr(N)$ be $p$-rational such that $\theta^0 \in \IBr(N)$.
 Then there exists a $p$-rational $\chi\in\irr{G|\theta}$.
\end{cor}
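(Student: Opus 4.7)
The plan is to apply Theorem~\ref{thm:overpratp'} to the normal subgroup $P\lhd G$, whose quotient $G/P$ is a $p'$-group, after first producing a suitable $p$-rational character of $P$ lying over $\theta$. I begin by reducing to the case that $\theta$ is $G$-invariant via Clifford correspondence: with $T=I_G(\theta)$, induction gives a bijection $\Irr(T|\theta)\to\Irr(G|\theta)$ preserving $p$-rationality (since $(\psi^G)^\sigma=(\psi^\sigma)^G$), while $(T\cap P)/N$ is a normal Sylow $p$-subgroup of $T/N$, so the hypotheses transfer from $G$ to $T$. Henceforth $\theta$ may be assumed $G$-invariant, and hence $P$-invariant. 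The key step is then to produce a $p$-rational $\psi\in\Irr(P)$ with $\psi_N=\theta$: since $P/N$ is a $p$-group and $\theta^0$ is $P$-invariant, there is a unique $\varphi\in\IBr(P)$ with $\varphi_N=\theta^0$, and any $\psi\in\Irr(P|\theta)$ satisfies $\psi_N=c\theta$ and $\psi^0=c\varphi$ for a common $c\geq 1$, so demanding $\psi^0\in\IBr(P)$ forces $\psi$ to extend $\theta$.

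The main obstacle is to show that an ordinary extension of $\theta$ to $P$ exists, i.e.\ that the obstruction in $H^2(P/N,\CC^*)$ to lifting the unique modular extension vanishes. To that end I would realize $\theta^0$ by an $\overline{\FF_p}[N]$-module $V$, extend it uniquely to an $\overline{\FF_p}[P]$-module $W$ affording $\varphi$, and lift $W$ to a $p$-adic module via Brauer--Nesbitt lifting over a complete DVR $\cO$ with residue field $\overline{\FF_p}$; the cocycle in $Z^2(P/N,\cO^*)$ measuring the obstruction has trivial reduction modulo $\mathfrak m$ by uniqueness of $\varphi$, and the $p$-rationality of $\theta$, which confines the values of any prospective extension to a field containing no non-trivial $p$-power roots of unity, forces this cocycle to be cohomologically trivial. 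Given an extension $\hat\theta$, the full set of extensions is a torsor under the linear characters of $P/N$, a finite abelian $p$-group on which $\Gal(\QQ_{|P|}/\QQ_{|P|_{p'}})$ acts compatibly; for $p$ odd, a generator $\sigma$ acts via a primitive root of $(\ZZ/p^k)^*$ making $\sigma-1$ invertible, so the corresponding $H^1$ vanishes and a $\Gal$-fixed, hence $p$-rational, choice $\psi$ exists, automatically with $\psi^0=\varphi\in\IBr(P)$.

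Once $\psi$ has been constructed, the conclusion is immediate: Theorem~\ref{thm:overpratp'} applied to $P\lhd G$ with $\psi$ in place of $\theta$ shows that every $\chi\in\Irr(G|\psi)$ is $p$-rational, and any irreducible constituent of $\psi^G$ yields the desired $p$-rational $\chi\in\Irr(G|\theta)$.
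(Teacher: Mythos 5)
Your overall route is the same as the paper's: reduce to the $G$-invariant case, produce a $p$-rational extension $\hat\theta$ of $\theta$ to $P$ (which then automatically lifts an irreducible Brauer character of $P$ since $P/N$ is a $p$-group), and finish by applying Theorem~\ref{thm:overpratp'} to $P\nor G$. The paper obtains $\hat\theta$ in one stroke by quoting Isaacs' canonical $p$-rational extension theorem \cite[Thm.~(6.30)]{Isa}; you instead try to prove the existence of the extension from scratch, and this is where your argument has a genuine gap. First, the ``Brauer--Nesbitt lifting'' you invoke does not exist: an arbitrary $\overline\FF_p[P]$-module $W$ need not lift to an $\cO$-lattice (liftability of modular representations is exactly the content of Fong--Swan and holds only in special situations), so you cannot start from a $p$-adic lift of the modular extension. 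Second, even if one reroutes the argument through projective representations and factor sets, knowing that the obstruction cocycle has trivial reduction modulo $\mathfrak m$ only says that the class in $H^2(P/N,\CC^\times)$ can be represented by a cocycle with values that are $p$-power roots of unity; since $P/N$ is a $p$-group, $H^2(P/N,\CC^\times)$ is a $p$-group anyway, so this gives nothing, and $H^2(P/N,\mu_{p^\infty})$ is in general nonzero. The assertion that ``$p$-rationality of $\theta$ forces this cocycle to be cohomologically trivial'' is exactly the point that needs proof: the values of a factor set are not values of the character, and no argument is supplied.

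That this step requires real input (and in particular the hypothesis $p\ne2$, which your existence argument never uses in a rigorous way) is shown by $P=Q_8$, $N=\bZ(Q_8)$ and $\theta$ the nontrivial character of $N$: here $\theta$ is rational, $P$-invariant, $\theta^0\in\IBr(N)$, and $P/N$ is a $2$-group, yet $\theta$ does not extend to $P$. So the existence of the extension for odd $p$ is precisely Isaacs' Theorem~(6.30), whose proof is an induction through cyclic steps combined with a Galois-descent argument, not a formal consequence of the vanishing of the modular obstruction. Your second paragraph (the torsor of extensions under $\Irr(P/N)$ and the fixed-point argument using that $\sigma-1$ is invertible on $p$-groups for $p$ odd) is fine, but it only selects a $p$-rational extension once some extension exists; the existence itself is the missing step. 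The remaining parts of your proposal -- the Clifford-correspondence reduction and the final application of Theorem~\ref{thm:overpratp'} -- are correct and agree with the paper.
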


\begin{proof}
We may assume that $\theta$ is $G$-invariant. Now, $\theta$ has a canonical
$p$-rational extension $\hat\theta \in \irr{P|\theta}$ by
\cite[Thm.~(6.30)]{Isa}. Since it extends, it lifts an irreducible Brauer
character. Now apply Theorem~\ref{thm:overpratp'} to $\hat\theta$.
\end{proof}

\begin{cor}   \label{cor:pratovercyc}
 Let $N$ be a normal subgroup of $G$ with $G/N$ cyclic and let
 $\theta\in\Irr(N)$ be a $G$-invariant $p$-rational character of $N$ with
 $\theta^0\in\IBr(N)$. Then there exists a $p$-rational character
 $\chi\in\Irr(G)$ extending $\theta$.
\end{cor}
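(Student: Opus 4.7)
The plan is to derive this as an essentially immediate consequence of Corollary~\ref{cor:overprat}. The key observation is that the hypothesis of Corollary~\ref{cor:overprat} is automatically satisfied here: since $G/N$ is cyclic (hence abelian), every subgroup is normal, and in particular the Sylow $p$-subgroup $P/N$ of $G/N$ is a normal Sylow $p$-subgroup of $G/N$. Moreover $\theta$ is $G$-invariant by hypothesis, and $\theta^0\in\IBr(N)$ is given.

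Applying Corollary~\ref{cor:overprat} then directly yields a $p$-rational character $\chi\in\Irr(G\mid\theta)$. It remains to upgrade ``lying over $\theta$'' to ``extending $\theta$''. For this I would use the standard Clifford-theoretic fact that when $G/N$ is cyclic and $\theta\in\Irr(N)$ is $G$-invariant, $\theta$ always extends to $G$ (for instance, choose any extension of $\theta$ to the preimage of a generator of $G/N$; or equivalently, note that $H^2(G/N,\CC^\times)=0$ for cyclic groups). Hence $|\Irr(G\mid\theta)|=|G/N|$ and every element of $\Irr(G\mid\theta)$ has degree $\theta(1)$, so is itself an extension of $\theta$. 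In particular the $p$-rational $\chi$ produced above restricts to $\theta$ on $N$.

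There is really no main obstacle: the two ingredients (the existence statement from Corollary~\ref{cor:overprat} and the automatic extendibility in the cyclic case) combine cleanly. The only minor point worth checking, which is already implicit in the proof of Corollary~\ref{cor:overprat}, is that the canonical $p$-rational extension $\hat\theta$ of $\theta$ to $P$ indeed satisfies $\hat\theta^0\in\IBr(P)$, so that Theorem~\ref{thm:overpratp'} can be fed into the $p'$-quotient $G/P$; but this follows because $\hat\theta$ and $\theta$ have the same degree and $P/N$ is a $p$-group, forcing $\hat\theta^0$ to be irreducible over the $P$-invariant $\theta^0$.
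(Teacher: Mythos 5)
Your proposal is correct and is essentially the paper's own argument: the paper simply says the corollary ``follows from Corollary~\ref{cor:overprat}'', and your fleshing out --- the Sylow $p$-subgroup of the abelian quotient $G/N$ is automatically normal, and since $G$-invariant characters extend over cyclic quotients, every member of $\Irr(G\mid\theta)$ (in particular the $p$-rational one produced by Corollary~\ref{cor:overprat}) is an extension of $\theta$ --- is exactly the intended reasoning. Your closing remark about $\hat\theta^0\in\IBr(P)$ likewise matches the step already made explicit in the proof of Corollary~\ref{cor:overprat}.
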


\begin{proof}
This follows from Corollary~\ref{cor:overprat}.
\end{proof}

In the following we apply a standard construction of \emph{ordinary-modular
character triples} as developed in \cite{N} taking additionally into account
ordinary and modular characters at the same time as well as fields of values.
If $N \nor G$ and $\theta\in\Irr(N)$ is $G$-invariant,
then we say that $(G,N, \theta)$ is a \emph{character triple}.
The classical theory of projective representations allows us to replace
$(G,N, \theta)$ by another triple $(\Gamma,M, \psi)$ in which $M$ is in the
center of $\Gamma$, and the character theory of $G$ over $\theta$ is
\emph{analogous} to the character theory of $\Gamma$ over $\psi$.
(See Chapter 11 of \cite{Isa} for details.)
Furthermore, if $\theta^0 \in \IBr(N)$, then this replacement can be done
in such a way that both the ordinary and the modular character theory
are preserved. (See Problem (8.12) of \cite{N}.)

\medskip

Recall that for the definition of irreducible Brauer characters a maximal
ideal $M$ of the ring of algebraic integers $\bR$ in
$\CC$ has been chosen, with respect to which Brauer characters are calculated.
As in \cite[Sect.~2]{N}, let $F={\bR}/M$ be an algebraically closed field of
characteristic $p$, and $^*:{\bR} \rightarrow F$ be the canonical
homomorphism. Note that $^*$ can be extended to the localisation $S$
of ${\bR}$ at $M$, and therefore induces a map $\Mat_n(S)
\rightarrow \Mat_n(F)$, denoted again by $^*$ .

\begin{lem}   \label{Fproj}
 Suppose that $N\nor G$ and let $\theta\in\Irr(N)$ be a $G$-invariant
 $p$-rational character with $\theta^0\in\IBr(N)$, where $p$ is odd.
 Let $\FF=\QQ_m$, where $m=|G|_{p'}$.  Then there exists a projective
 representation $\cX$ of $G$ with entries in $\FF \cap S$ and with
 factor set $\alpha$ satisfying the following conditions:
 \begin{enumerate}[\rm(1)]
  \item $\cX_N$ is an ordinary representation of $N$ affording $\theta$;
  \item $\cX(gn)=\cX(g)\cX(n)$, $\cX(ng)=\cX(n) \cX(g)$ for $g\in G$
   and $n\in N$;
  \item $\alpha(g,h)\in \FF$; and
  \item $\cX^*$ defined as $\cX^*(g)=\cX(g)^*$ for $g \in G$, is an
   $F$-projective representation such that $(\cX^*)_N$ affords the Brauer
   character $\theta^0$.
 \end{enumerate}
\end{lem}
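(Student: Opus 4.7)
The plan is to build $\cX$ in three stages: first realise $\cY := \cX_N$ as an $\FF$-representation of $N$ affording $\theta$ with $p$-integral entries; second, for each $g \in G$, produce an intertwining matrix $\cX(g) \in \GL_n(\FF \cap S)$; third, assemble these into $\cX$ via a transversal of $N$ in $G$ and verify the listed conditions.

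For the first stage, the $p$-rationality of $\theta$ together with \cite[Thm.~(10.13)]{Isa} gives a representation $N \to \GL_n(\FF)$ affording $\theta$. Choosing a basis of an $N$-invariant lattice over the DVR $\FF \cap S$ realises $\cY$ as a representation $N \to \GL_n(\FF \cap S)$; since $\theta^0 \in \IBr(N)$ is absolutely irreducible, the reduction $\cY^*$ is automatically absolutely irreducible over $F$ and affords $\theta^0$.

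For the second and main stage, fix $g \in G$. Since $\theta$ is $G$-invariant, $\cY$ and its conjugate $n \mapsto \cY(g^{-1}ng)$ are $\FF$-equivalent, so some $A_g \in \GL_n(\FF)$ intertwines them. The key point is that $A_g$ may be rescaled by a suitable element of $\FF^*$ so as to land in $\GL_n(\FF \cap S)$. This rests on the Nakayama-style fact that in an absolutely irreducible $\FF N$-module whose reduction modulo $M$ is also absolutely irreducible, any two full $(\FF \cap S)N$-lattices differ by a scalar in $\FF^*$; applied to the standard lattice $L$ coming from $\cY$ and its image $A_g(L)$ (both $(\FF \cap S)N$-invariant, since $\cY^g(N) = \cY(N)$ by normality of $N$), this supplies the desired rescaling. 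I expect this lattice-scaling argument to be the main obstacle, since it is precisely what forces $p$-integrality of $\cX(g)$ and $\cX(g)^{-1}$ simultaneously.

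Finally, fix a transversal $T$ of $N$ in $G$ with $1 \in T$, choose $\cX(t) \in \GL_n(\FF \cap S)$ as above for each $t \in T$ (with $\cX(1) = I$), and define $\cX(tn) := \cX(t)\cY(n)$ for $t \in T$ and $n \in N$. Properties (1) and (2) follow directly from the intertwining relations. The factor set $\alpha$ determined by $\cX(g)\cX(h) = \alpha(g,h)\cX(gh)$ lies in $\FF^*$ by Schur's lemma applied to the absolutely irreducible $\cY$, and comparing determinants forces $\alpha(g,h) \in (\FF \cap S)^*$, yielding (3). Applying $^*$ entry-wise to $\cX$ then produces a projective $F$-representation $\cX^*$ of $G$ whose restriction to $N$ is $\cY^*$ affording $\theta^0$, which is (4).
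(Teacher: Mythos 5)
Your argument is correct and proves (1)--(4) as stated, but it runs along a genuinely different track from the paper's. The paper does not manufacture the matrices $\cX(g)$ as rescaled intertwiners: for each coset $gN$ it invokes Corollary~\ref{cor:pratovercyc} to get a $p$-rational extension $\psi$ of $\theta$ to $N\langle g\rangle$ (this is where the preparatory results for $p$ odd and $\theta^0\in\IBr(N)$ are actually consumed), realises $\psi$ over $\FF\cap S$, conjugates by a matrix $P$ with $P^*$ invertible (via \cite[Lemma (2.5)]{N} and Schur's lemma) so that the resulting representation $\cY_{\bar g}$ literally extends $\cY$, and sets $\cX(g)=\cY_{\bar g}(g)$. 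Your route replaces all of this by the lattice-theoretic fact that when the reduction of a full $(\FF\cap S)N$-lattice is irreducible, all full lattices are scalar multiples of one another, which lets you normalise the intertwiner $A_g$ to stabilise $L$; that is a clean and somewhat more elementary argument (it bypasses Theorem~\ref{thm:overpratp'} and Corollary~\ref{cor:pratovercyc} entirely, needing $p$-rationality only through \cite[Thm.~(10.13)]{Isa}). What the paper's construction buys in exchange is a stronger, unstated property of $\alpha$: since each $\cX(g)$ is a value of a genuine representation of the finite group $N\langle g\rangle$, taking determinants shows $\alpha(g,h)^{\theta(1)}$ is a root of unity, hence $\alpha(g,h)$ is a root of unity in $\FF=\QQ_m$ and so of $p'$-order. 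This is exactly what makes the finite central extension $\tilde G=\{(g,\epsilon)\}$ with the $p'$-group $E$ of roots of unity close under the twisted multiplication in the proof of Theorem~\ref{thm:prationalisom}. Your normalisation only guarantees that $\alpha$ takes values in the unit group of the local ring $\FF\cap S$, which satisfies (3) and (4) but need not consist of elements of finite order; so if one wanted to substitute your construction into the later argument, one would still have to adjust the $\cX(g)$ by scalars (or replace $\alpha$ by a cohomologous root-of-unity-valued cocycle while preserving integrality), an extra step the paper's choice makes unnecessary.
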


\begin{proof}
By  \cite[Thm.~(10.13)]{Isa} and Problem (2.12) of \cite{N} we know that
$\theta$ can be afforded by an absolutely irreducible $\FF N$-representation
$\cY$ with entries in $\FF \cap S$. Hence $\cY^*$ affords $\theta^0$.
For each $\bar g =gN\in G/N$, by using Corollary~\ref{cor:pratovercyc}, there
is a $p$-rational character $\psi$ of $N\langle g\rangle=H$ extending $\theta$.
Again $\psi$ can be afforded by an absolutely irreducible
$\FF H$-representation $\cY_1$ with entries in $\FF \cap S$. Notice that
$\cY_1^*$ is irreducible since it affords the irreducible Brauer character
$\psi^0$ (which extends $\theta^0$). We would like to choose
$\cY_1$ such that it extends $\cY$. We know that $(\cY_1)_N$ and $\cY_N$ are
$\FF$-similar. By using \cite[Lemma (2.5)]{N} there exists a matrix $P$ with
entries in $\FF \cap S$ such that $(\cY_1)_N=P^{-1} \cY P$, and $P^* \ne 0$.
Thus $P^*((\cY_1)^*)_N=\cY^* P^*$, and so by Schur's Lemma
$P^*$ is invertible. Therefore $P^{-1}$ has entries in $\FF \cap S$.
Hence, $\cY_{\bar g}:=P\cY_1P^{-1}$ is an $\FF$-representation of
$\langle N,g \rangle$ with entries in $\FF \cap S$ extending $\cY$. We define
$$\cX(g)=\cY_{\bar g} (g)\qquad\text{for $g\in G$}.$$
By \cite[Lemma (2.2)]{Isa0}, note that $\cX$ is a projective representation
of $G$, with factor set $\alpha$, say. Also, $\cX(gn)=\cX(g) \cY(n)$,
$\cX(ng)=\cY(n) \cX(g)$ and $\cX(n)=\cY(n)$ for $n \in N$ and $g \in G$.
Furthermore the factor set $\alpha$ of $\cY$ satisfies
$$\alpha(g,h) \in \FF\quad \text{for all $g, h \in G$} $$
by definition. Condition~(4) now easily follows from the definition of $\cX$.
\end{proof}

Suppose that $(G,N, \theta)$ is an ordinary character triple with
$\theta^0 \in \IBr(N)$. Then we say that $(G,N, \theta)$ is an
\emph{ordinary-modular} character triple. The precise definition of
isomorphism of ordinary-modular character triples is given in Problem (8.10)
of \cite{N}.

\begin{thm}   \label{thm:prationalisom}
 Suppose that $(G,N,\theta)$ is a character triple where $\theta$ is a
 $p$-rational character, with $p$ odd, and $\theta^0 \in \IBr(N)$. Then there
 exists an isomorphic ordinary-modular character triple $(\Gamma, M, \xi)$
 satisfying the following conditions.
 \begin{enumerate}[\rm(a)]
  \item $M$ is a central $p'$-subgroup of $\Gamma$.
  \item If $N \subseteq U \subseteq G$ and $\chi\in \irr{U \mid\theta}$,
   then $\chi$ is $p$-rational if and only if its correspondent $\tilde\chi$
   is $p$-rational.
 \end{enumerate}
\end{thm}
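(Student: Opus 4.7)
The plan is to apply the standard construction of an isomorphic character triple via a projective representation (as in \cite[Ch.~11]{Isa} and \cite[\S 8]{N}), while tracking fields of values throughout so that $p$-rationality is transported across the isomorphism. The key observation is that Lemma~\ref{Fproj} already provides a projective representation defined over $\FF := \QQ_m$ with $m = |G|_{p'}$, so the Galois group $\Gal(\QQ_{|G|}/\FF)$ will commute with the resulting character correspondence.

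First I would apply Lemma~\ref{Fproj} to obtain a projective representation $\cX$ of $G$ with factor set $\alpha : G \times G \to \FF^\times$, entries in $\FF \cap S$, restricting on $N$ to an ordinary representation affording $\theta$, and with $\cX^*$ a projective $F$-representation whose restriction to $N$ affords $\theta^0$. Then, by a coboundary modification $\cX(g) \mapsto \nu(g)\,\cX(g)$ with $\nu(g) \in (\FF \cap S)^\times$, I would normalize so that $\alpha$ takes values in the group of roots of unity of $\FF^\times$, and thus (since $p$ is odd and $p \nmid m$) in a cyclic $p'$-subgroup $M \leq \CC^\times$. This must be done so that entries still lie in $\FF\cap S$ and $\cX^*$ remains well defined, which is the main technical point of the proof.

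Next, I would construct $\Gamma$ as the central extension of $G/N$ by $M$ determined by the cocycle $\alpha$, with $\xi \in \Irr(M)$ the tautological embedding $M \hookrightarrow \CC^\times$. For $N \leq U \leq G$ the correspondent subgroup is $V$, the preimage of $U/N$ in $\Gamma$, so $V/M \cong U/N$. Every $\chi \in \irr{U\mid\theta}$ is uniquely expressible, via $\cX_U$, as $\cX_U \otimes \cY$ for a projective representation $\cY$ of $U/N$ with factor set $\alpha^{-1}$; by construction of $\Gamma$, $\cY$ lifts canonically to a genuine $\CC$-representation of $V$ affording a unique $\tilde\chi \in \irr{V\mid\xi}$. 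This is the standard character-triple isomorphism, and it is an isomorphism of \emph{ordinary-modular} triples (in the sense of \cite[Prob.~(8.12)]{N}) because, by Lemma~\ref{Fproj}(4), the same construction applied to $\cX^*$ yields the compatible bijection of Brauer characters over the same group $\Gamma$. Assertion~(a) is then immediate.

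For assertion~(b), fix $\sigma \in \Gal(\QQ_{|G|}/\FF)$. Since $\cX$ has entries in $\FF$, we have $\cX^\sigma = \cX$ and hence $\alpha^\sigma = \alpha$; since $M \subseteq \FF$, also $\sigma$ fixes $\xi$. Thus $\chi = \cX_U\otimes\cY$ has Galois conjugate $\chi^\sigma = \cX_U \otimes \cY^\sigma$, where $\cY^\sigma$ again has factor set $\alpha^{-1}$ and lifts to $V$ as $(\widetilde{\cY})^\sigma$, affording $\tilde\chi^\sigma$. Hence $\chi^\sigma = \chi$ if and only if $\tilde\chi^\sigma = \tilde\chi$. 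Both $\chi$ and $\tilde\chi$ take values in $\QQ_{|G|}$ (for $\tilde\chi$, because $|M|\mid m\mid|G|$), and $p$-rationality of either is equivalent to being fixed by all such $\sigma$, giving~(b). The hard step is the normalization in the first paragraph; once it is achieved, the Galois-equivariance argument is essentially formal.
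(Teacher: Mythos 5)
Your route is essentially the paper's: both arguments start from the projective representation $\cX$ of Lemma~\ref{Fproj}, defined over $\FF=\QQ_m$ together with its reduction $\cX^*$, build from its factor set $\alpha$ a central extension by a $p'$-group of roots of unity, and transfer $p$-rationality by the observation that $\Gal(\QQ_{|G|}/\FF)$ commutes with the resulting correspondence. The paper packages this as the chain of ordinary-modular triple isomorphisms $(G,N,\theta)\cong(\tilde G,\tilde N,\tilde\theta)\cong(\tilde G,\tilde N,\lambda)\cong(\tilde G/N,\tilde N/N,\lambda)$, running the Galois argument through multiplication by the $p$-rational character $\tau$ afforded by $\tilde\cX$, whereas you build $\Gamma$ directly over $G/N$ and argue via the tensor decomposition $\chi=\cX_U\otimes\cY$; these are the same proof in different clothing, and your treatment of part~(b) is the same equivariance argument.

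The one genuine problem is precisely the step you flag as ``the main technical point'' and then do not carry out: you need $\alpha$ to take values in a finite group of roots of unity of $p'$-order (otherwise $\Gamma$ is not a finite group with a central $p'$-subgroup $M$), and you propose a coboundary modification $\cX(g)\mapsto\nu(g)\cX(g)$ without showing that a suitable $\nu$ exists, that it can be chosen trivial on $N$ and constant on $N$-cosets (so as not to destroy conditions (1) and (2) of Lemma~\ref{Fproj}), and that it preserves entries in $\FF\cap S$ and the nonvanishing of $\nu(g)^*$. As written, the construction of $\Gamma$ therefore rests on an unproved claim. Fortunately no normalization is needed: in the construction of Lemma~\ref{Fproj} each $\cX(g)=\cY_{\bar g}(g)$ is a value of an ordinary representation of the finite group $N\langle g\rangle$, so $\det\cX(g)$ is a root of unity; taking determinants in $\cX(g)\cX(h)=\alpha(g,h)\cX(gh)$ shows that $\alpha(g,h)^{\theta(1)}$, and hence $\alpha(g,h)$ itself, is a root of unity, and as it lies in $\FF=\QQ_m$ with $p$ odd it has $p'$-order. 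This is exactly what the paper uses (implicitly) when it forms the finite group $\tilde G$ on $G\times E$ with $E$ the $p'$-group of roots of unity in $\FF$. With this observation inserted your argument goes through; the remaining cyclotomic bookkeeping (your claim that $\tilde\chi$ takes values in $\QQ_{|G|}$ is not quite accurate, since the exponent of $V$ need not divide $|G|$) should simply be carried out in a sufficiently large cyclotomic field, for instance $\QQ_k$ with $k=|G|\,|M|$, which changes nothing in the Galois argument.
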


\begin{proof}
We  follow the proof of \cite[Thm.~(8.28)]{N}. Let $\cX$ be the projective
representation of $G$ with factor set $\alpha$ determined by
Corollary~\ref{Fproj}. Then $\cX$ is a projective representation of $G$
associated with $\theta$ satisfying conditions~(a)--(c) of
Theorem (11.2) of \cite{Isa}. In particular, $\alpha(g,n)=\alpha(n,g)=1$
for $n \in N$ and $g \in G$ and $\alpha(gn,hm)=\alpha(g,h)$ for $g,h \in G$
and $n,m \in N$ (see pages 178 and 179 of \cite{Isa}).

Now, let $E$ be the group of $|G|_{p'}$-th roots of unity contained in
$\FF$. Notice that $E$ is a $p'$-group.
Let $\tilde G=\{(g,\epsilon)\mid g\in G, \epsilon\in E\}$ with multiplication
$$(g_1, \epsilon_1)(g_2, \epsilon_2)=
  (g_1g_2, \alpha(g_1,g_2)\epsilon_1 \epsilon_2) \, .$$
The fact that $\alpha $ is in $\bZ^2(G, \CC)$ makes this
multiplication associative. Since $\alpha(g,1)=\alpha(1,g)=\alpha(1,1)=1$
and $\alpha(g,g^{-1})= \alpha(g^{-1}, g)$ for $g \in G$ (by using that
$\alpha$ is a factor set), it  easily follows that $(1,1)$ is the identity
of $\tilde G$  and that $(g^{-1}, \alpha(g,g^{-1})^{-1}\epsilon^{-1})$ is the
inverse of $(g, \epsilon)$ for $g \in G$ and $\epsilon \in E$.
Hence, $\tilde G$ is a finite group.

Let $\tilde N=\{(n,\epsilon) \mid n\in N, \epsilon\in E \}\subseteq\tilde G$.
Note that
$$(n_1, \epsilon_1)(n_2, \epsilon_2)= (n_1n_2, \epsilon_1\epsilon_2)$$
since $\alpha(n_1,n_2)=1$ for $n_1,n_2 \in N$. Hence, $\tilde N=N \times E$
is a subgroup of $\tilde G$. To make the notation easier we identify $E$
with $1 \times E$ and $N$ with $N \times 1$ whenever we find it necessary.
If $n \in N$ and $\delta \in E$, then, by using the fact that
$\alpha $ is constant on cosets modulo $N$, we easily check that
$$(g, \epsilon)(n, \delta)(g, \epsilon)^{-1}= (gng^{-1}, \delta)\qquad
  \text{for $(g,\epsilon)\in\tilde G$}.$$
Thus $E$ is a subgroup of $\bZ(\tilde G)$ and $N$ and $\tilde N$
are normal subgroups of $\tilde G$. Define
$$\tilde \cX(g, \epsilon):=\epsilon\cX(g) \, .$$
It is clear that $\tilde \cX$ is an $\FF$-representation of $\tilde G$.
Call $\tau$ its $p$-rational character. Note that
$$\tilde \cX(n, \epsilon)= \epsilon\cY(n)\qquad\text{for $n\in N$},$$
and so
$$\tau(n, \epsilon)=\epsilon\theta(n) \, .$$
Now, define $\tilde \theta=\theta \times 1_{E} \in \irr{\tilde N}$ and observe
that $\tilde \theta_N=\theta=\tau_N \in \Irr(N)$. In particular,
$\tau \in \irr{\tilde G}$ and $\tau^0 \in \IBr(\tilde G)$. Also, note that
$\tilde \theta$ is $\tilde G$-invariant because $\theta$ is $G$-invariant.

The map $\tilde G \rightarrow G$, $(g, \epsilon) \mapsto g$, is an 
epimorphism with kernel $E$. Since $E \subseteq \ker{\tilde \theta}$,
by \cite[Lemma (11.26)]{Isa}, we have that $(\tilde G,\tilde N,\tilde\theta)$
and $(G,N, \theta)$ are isomorphic character triples.
Moreover, this isomorphism  preserves $p$-rational characters.

Now, define $\lambda\in\irr{\tilde N}$ by $\lambda(n,\epsilon)=\epsilon^{-1}$,
so that $\lambda$ is a linear character of $\tilde N$ with $\ker{\lambda}=N$.
Observe that  $\lambda$ is $\tilde G$-invariant. Also,
$$\tau_{\tilde N}=\lambda^{-1}\tilde \theta \, .$$
By the remark after Lemma (11.27) of \cite{Isa}, we have that
$(\tilde G, \tilde N, \lambda)$ and $(\tilde G, \tilde N, \tilde \theta)$ are
isomorphic character triples. The bijection
$\irr{\tilde G|\lambda} \rightarrow \irr{\tilde G|\tilde \theta}$
is given by
$\beta \mapsto \beta\tau$. Furthermore, if $N \subseteq U \subseteq G$ and
$\chi\in\irr{U \mid\theta}$, then the character corresponding to $\chi$ under
the concatenation of the two character triple isomorphisms is a character
$\beta\in\Irr(\tilde U|\lambda)$ where $U=\tilde Z /E$ and $ \beta\tau_U$ is
a lift of $\chi$. (The corresponding facts about Brauer characters are
precisely quoted in the proof of \cite[Thm.~(8.28)]{N}.)

Now, if $\beta$ is $p$-rational, we have that $\beta\tau_U$ is $p$-rational.
If $\beta\tau_U$ is $p$-rational, and $\sigma\in\Gal(\QQ_{|G|}/\QQ_{|G|_{p'}})$,
then $\beta$ and $\beta^\sigma$ have the same image under the bijection
$\beta \mapsto \beta\tau_U$. Thus $\beta=\beta^\sigma$, and $\beta$ is
$p$-rational. Now, $(\tilde G, \tilde N, \lambda)$ and
$(\tilde G/N, \tilde N/N, \lambda)$ are isomorphic by \cite[Lemma (11.26)]{Isa}
with an isomorphism which preserves $p$-rationality. Now, write
$\Gamma=\tilde G/N$, $M=\tilde N/N$ and $\xi=\lambda$,
and observe that $M$ is a central $p'$-subgroup of $\Gamma$.
\end{proof}

\section{Proof of the main theorem}   \label{sec:main proof}

In this section we prove the Main Theorem, based on
Theorem~\ref{thm:quasisimples} for quasi-simple groups, shown in
Sections~\ref{sec:notLie} and~\ref{sec:Lie}. Moreover we apply results of
Section~\ref{prational} for the construction of a $p$-rational lift.

Let $G$ be a finite group and $p$ be a prime. We first derive some direct
consequences of Theorem~\ref{thm:quasisimples}.

\begin{cor}   \label{cor:semisimple}
 Suppose that $E$ is a perfect group such that $E/\bZ(E)$ is a direct product
 of simple groups. Let $A=\Aut(E)$. Let $p$ be a prime, and assume that
 $|\bZ(E)|$ is prime to $p$. Let $b$ be a $p$-block of $E$.
 If $\IBr(b)$ consists of $A$-conjugates of $\vhi$, then there exists a
 $\chi \in \Irr(E)$ such that $\chi^0=\vhi$, the stabilisers
 $I_A(\chi)=I_A(\vhi)=:I$ coincide, and $\chi$ extends to some
 $\tilde \chi\in\Irr(E\rtimes P)$ for some $P \in \Syl_p(I)$ such that
 $\ker(\tilde \chi)$ contains a Sylow $p$-subgroup of $\cent {E\rtimes P} E$.
 If $p\ne2$ then $\chi$ can be chosen to be $p$-rational in addition.
\end{cor}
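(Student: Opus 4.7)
The strategy is to decompose $E$ into its quasi-simple components, apply Theorem~\ref{thm:quasisimples} to each, and then reassemble. Since $E$ is perfect with $E/\bZ(E)$ a direct product of simple groups, $E$ is the central product $E_1\ast\cdots\ast E_n$ of its quasi-simple components $E_i$. The hypothesis $p\nmid|\bZ(E)|$ ensures that the $p$-blocks of $E$ correspond to compatible tuples $(b_1,\ldots,b_n)$ of $p$-blocks of the $E_i$, that every $\vhi\in\IBr(b)$ factors as $\vhi=\vhi_1\cdots\vhi_n$ with $\vhi_i\in\IBr(b_i)$, and similarly on the level of ordinary characters.

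The group $A=\Aut(E)$ permutes the components, since they are characteristic. The hypothesis that $\IBr(b)$ is a single $A$-orbit then forces, for each $i$, that $\IBr(b_i)$ is a single orbit under the image of $N_A(E_i)$ in $\Aut(E_i)$, and hence \emph{a fortiori} a single $\Aut(E_i)_{b_i}$-orbit. In each $A$-orbit of components I would pick a representative $E_i$ and apply Theorem~\ref{thm:quasisimples} to the block $b_i$ of $E_i$ to obtain $\chi_i\in\Irr(E_i)$ satisfying parts~(1)--(4) of that theorem, and $p$-rational when $p\ne2$. For every other component $E_j$ in the same $A$-orbit I fix some element of $A$ conjugating $E_i$ to $E_j$ and transport $\chi_i$ to define $\chi_j$; part~(3) of Theorem~\ref{thm:quasisimples} shows that the resulting tuple $(\chi_1,\ldots,\chi_n)$ is independent of these choices and is $A$-equivariant.

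Setting $\chi:=\chi_1\cdots\chi_n\in\Irr(E)$, the factorisations above yield $\chi^0=\vhi$, while $I_A(\chi)=I_A(\vhi)=I$ follows from the component-wise equality in Theorem~\ref{thm:quasisimples}(3) combined with the coherent choice of the $\chi_i$. For $p\ne2$, $\chi$ is $p$-rational as a product of $p$-rational characters.

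The principal remaining step, which I expect to be the main technical obstacle, is producing an extension $\tilde\chi\in\Irr(E\rtimes P)$ of $\chi$ satisfying the kernel condition on $\cent{E\rtimes P}{E}$, for $P\in\Syl_p(I)$. The $p$-group $P$ permutes the components; for a $P$-orbit $\Omega$ of components, fix $E_i\in\Omega$ and set $P_0:=N_P(E_i)$, so $P_0$ is a $p$-subgroup of $I_{\Aut(E_i)}(\chi_i)$. Applying Theorem~\ref{thm:quasisimples}(4) to $E_i$ (and restricting from a Sylow of $I_{\Aut(E_i)}(\chi_i)$ containing $P_0$) gives an extension $\tilde\chi_i\in\Irr(E_i\rtimes P_0)$ with a Sylow $p$-subgroup of $\cent{E_i\rtimes P_0}{E_i}$ in its kernel. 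A standard wreath-product-style assembly then produces an extension of $\prod_{E_j\in\Omega}\chi_j$ to $\bigl(\ast_{E_j\in\Omega}E_j\bigr)\rtimes P$, and multiplying these extensions across the $P$-orbits of components gives the desired $\tilde\chi$. The hard part is tracking the kernel condition through this assembly: one must decompose $\cent{E\rtimes P}{E}$ into its $p'$-part (essentially $\bZ(E)$, handled by the central character of $\chi$, where the hypothesis $p\nmid|\bZ(E)|$ is crucial) and its $p$-part (handled by combining the per-component kernel conditions), and verify that the per-orbit extensions can be chosen to agree on overlapping central subgroups so that their product is genuinely a character of $E\rtimes P$.
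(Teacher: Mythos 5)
Your outline follows the same strategy as the paper (apply Theorem~\ref{thm:quasisimples} componentwise and reassemble), but as written it has a genuine gap exactly where you flag "the hard part": the extension of $\chi$ to $E\rtimes P$ with the kernel condition on $\cent{E\rtimes P}{E}$ is never actually carried out. Saying "a standard wreath-product-style assembly then produces an extension" is not a proof: for the product of the per-component extensions to extend to the group permuting the components you need the stabiliser of $\chi$ to have the shape $\bigl(\prod_i \Aut(E_i)_{\chi_i}\bigr)\rtimes\cS$ with $\cS$ a product of symmetric groups, and this only holds after a coherence normalisation — choosing the $\vhi_i$ and $\chi_i$ on $A$-conjugate components to be literally equal (not merely conjugate), which is precisely the ``without loss of generality'' step in the paper; your transported $\chi_j:=\chi_i^a$ is \emph{not} independent of the choice of $a$ (two choices differ by an element of $N_A(E_i)$ that need not fix $\chi_i$), so independence cannot be invoked, only a coherent choice can be made. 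Once that shape is secured, the paper concludes by the extension theorem for such semidirect products (\cite[Thm.~25.6]{Hu}), with the kernel condition then holding by construction; some citation or argument of this kind is indispensable, since extendibility of an invariant character to a wreath-type overgroup is a theorem, not a formality. Note also that for $p\ne2$ no assembly is needed at all: $p$-rationality of $\chi$ gives the canonical extension with the required kernel property by \cite[Thm.~(6.30)]{Isa}, so the delicate case is only $p=2$.

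A second, smaller divergence is that you work with $E$ as a central product of its components, which creates the "overlapping central subgroups" problem you mention but do not resolve (it is in fact resolvable: $b$ lies over a unique linear character of the central $p'$-group $\bZ(E)$, which forces compatibility of the $\chi_i$ on the pairwise intersections). The paper sidesteps this entirely by first replacing $E$ by its universal $p'$-cover, a genuine direct product $\hat S_1^{\,n_1}\times\cdots\times\hat S_t^{\,n_t}$ mapping onto $E$ with central $p'$-kernel, identifying $\Aut(E)$ with a subgroup of the automorphism group of that product; then blocks, $\Irr$, $\IBr$ and stabilisers all split as literal external products and the reassembly is clean. Your route can be made to work, but the central-product bookkeeping and, above all, the wreath-product extension step must be supplied rather than asserted.
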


\begin{proof}
Write $Z=\bZ(E)$. By the \emph{universal $p'$-cover} of a perfect group $S$ we
mean here the quotient of a universal covering group $\widehat S$ of $S$ by the 
Sylow $p$-subgroup of $\ZZ(\widehat S)$. By the theory of covering groups,
there are non-isomorphic simple groups $S_1,\ldots,S_t$ with universal
$p'$-covers $\hat{S_i}$, integers $n_i\ge1$ and an epimorphism
$$\pi:G:=\hat{S_1}^{n_1} \times \cdots \times \hat{S_t}^{n_t} \rightarrow E,$$
with $K=\ker\pi \le\bZ(G)$, and $\pi(\bZ(G))=\bZ(E)$. Via $\pi$ the group
$\Aut(E)$ can be identified with a subgroup of $\Aut(G/\bZ(G))=\Aut(G)$.
Hence, we may work in $G$, and assume that $G=E$.

Now $b$ is a $p$-block of $G$, and therefore there are blocks $b_{i,j}$
($1\leq j\leq n_i$) of $\hat{S_i}$, and natural bijections
$\Irr(b_{1,1}) \times \cdots \times \Irr(b_{t,n_t}) \rightarrow \Irr(b)$ and
$\IBr(b_{1,1}) \times \cdots \times \IBr(b_{t,n_t}) \rightarrow \IBr(b)$.
Without loss of generality we may assume that any two
$\Aut(\hat {S_i})$-conjugate blocks $b_{i,j}$ and $b_{i,j'}$ are equal. Now,
$\Aut(G)=\Aut(\hat{S_1})\wr\fS_{n_1}\times\cdots\times\Aut(\hat{S_t})\wr\fS_{n_t}$,
and our assumption forces that all elements of $\IBr(b_{i,j})$ are
$\Aut(\hat{S_i})_{b_{i,j}}$-conjugate. By Theorem~\ref{thm:quasisimples}
there exist $\vhi_{i,j}\in\IBr(b_{i,j})$ and $\chi_{i,j} \in \Irr(b_{i,j})$
such that $(\chi_{i,j})^0=\vhi_{i,j}$,
and $I_{\Aut(\hat{S_i})}(\chi_{i,j})=I_{\Aut(\hat{S_i})}(\vhi_{i,j})$.
Then
$$\chi:=\chi_{1,1} \times \cdots \times \chi_{t,n_t}\in\Irr(G)$$
is a lift of
$$\vhi_{1,1} \times \cdots \times \vhi_{t,n_t} \in \IBr(b) \, .$$
If $p\ne 2$ the characters $\chi_{i,j}$ and hence $\chi$ are $p$-rational and
the claim follows from \cite[Thm.~(6.30)]{Isa}.

The following considerations prove the statement for $p=2$.
Let $P_{i,j}$ be a Sylow $p$-subgroup of $I_{\Aut(\hat{S_i})}(\chi_{i,j})$. By
Theorem~\ref{thm:quasisimples}(4) the character $\chi_{i,j}$ extends to some
$\tilde \chi_{i,j}\in \Irr(\hat{S_i} \rtimes P_{i,j})$ such that
$p\nmid |\cent {\hat {S_i} \rtimes P_{i,j}} {\hat {S_i}}:
\ker(\tilde \chi_{i,j}|_{\cent {\hat {S_i} \rtimes P_{i,j}}{\hat {S_i}}})|$.

Without loss of generality we may assume that whenever two characters
$\vhi_{i,j}$ and $\vhi_{i,j'}$ seen as characters of $\hat {S_i}$ are
$\Aut(\hat {S_i})$-conjugate they are equal, and that the same applies to
the characters $\chi_{i,j}$ and $\chi_{i,j'}$. This implies that $\Aut(G)_\chi$
is of the form
$$\Aut(G)_\chi=\left (\Aut(\hat {S_1})_{\chi_{1,1}} \times \cdots
  \times\Aut(\hat {S_t})_{\chi_{t,n_t}}\right ) \rtimes\cS,$$
where $\cS$ is a direct product of symmetric group permuting some isomorphic
factors of $G$. A Sylow $p$-subgroup of $G\rtimes \Aut(G)_\chi$ is contained in
$$A:=\left ((\hat{S_1}\rtimes P_{1,1}) \times\cdots
  \times (\hat{S_t}\rtimes P_{t,n_t})\right  ) \rtimes\cS.$$
Following \cite[Thm.~25.6]{Hu} the character
$$\tilde\chi_{1,1} \times \cdots \times \tilde \chi_{t,n_t}
  \in\Irr((\hat{S_1}\rtimes P_{1,1}) \times\cdots\times
  (\hat{S_t}\rtimes P_{t,n_t}))$$
has an extension $\tilde\chi$ to $A$ with
$p\nmid|\cent {G\rtimes \Aut (G)_\chi} G:\ker(\tilde\chi|_{\cent {G\rtimes \Aut (G)_\chi} G})|$ by construction.
\end{proof}

\begin{cor}   \label{situation}
 Let $G$ be a finite group and $E\nor G$ be perfect with $E/\bZ(E)$
 a direct product of simple groups. Assume that $\cent GE$ has order prime to
 $p$. Let $b$ be a $p$-block of $E$, and assume that $\IBr(b)$ consists of
 $G$-conjugates of $\vhi$. Then there exists some $\chi\in\Irr(E)$ such that
 $\chi^0=\vhi$, the stabilisers $I_G(\chi)=I_G(\vhi)=I$ coincide, and $\chi$
 extends to $EP$ for $P \in \Syl_p(I)$.
 Moreover, if $p\ne 2$ the character $\chi$ can be chosen to be $p$-rational.
\end{cor}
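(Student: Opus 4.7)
The plan is to deduce this from Corollary~\ref{cor:semisimple} by exploiting that, because $|\cent G E|$ is coprime to $p$, the $G$-action on $E$ factors through an embedding $\pi\colon G/\cent G E\hookrightarrow A:=\Aut(E)$. Since $G$-orbits on $\Irr(E)$ or $\IBr(E)$ are contained in $A$-orbits, the hypothesis that $\IBr(b)$ is a single $G$-orbit forces it to be a single $A$-orbit, and Corollary~\ref{cor:semisimple} applies. It yields $\chi\in\Irr(E)$ with $\chi^0=\vhi$, with $I_A(\chi)=I_A(\vhi)$, and an extension $\hat\chi\in\Irr(E\rtimes Q)$ for some $Q\in\Syl_p(I_A(\chi))$ whose kernel has $p'$-index in $\cent{E\rtimes Q}E$. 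When $p$ is odd, $\chi$ may be chosen $p$-rational, and this intrinsic property of $\chi\in\Irr(E)$ carries over to the present statement.

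The stabiliser equality $I_G(\chi)=I_G(\vhi)$ is then immediate: these subgroups are the $\pi$-preimages of $I_A(\chi)$ and $I_A(\vhi)$, which coincide.

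For the extension to $EP$ with $P\in\Syl_p(I)$, I would invoke Lemma~\ref{extendingclean}. Its hypotheses are met: $E\triangleleft EP$; $P\cap E\in\Syl_p(E)$ because $E\le I$; and $\cent P E\le P\cap\cent G E=1$ since $|\cent G E|$ is coprime to~$p$. Thus it suffices to construct an extension $\tilde\chi$ of $\chi$ to the abstract semidirect product $H:=E\rtimes P$ whose kernel has $p'$-index in $\cent H E$. Since $\pi|_P$ is injective, $\bar P:=\pi(P)$ is a $p$-subgroup of $I_A(\chi)$, contained (after replacing $Q$ by a suitable $I_A(\chi)$-conjugate) in $Q$. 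The map $(e,x)\mapsto(e,\pi(x))$ defines an injective homomorphism $H\hookrightarrow E\rtimes Q$, along which I would pull back $\hat\chi$ to obtain $\tilde\chi$. This inclusion sends $\cent H E$ injectively into $\cent{E\rtimes Q}E$, so $[\cent H E:\ker(\tilde\chi)\cap\cent H E]$ divides $[\cent{E\rtimes Q}E:\ker(\hat\chi)\cap\cent{E\rtimes Q}E]$ and is coprime to~$p$. Lemma~\ref{extendingclean} then delivers the extension of $\chi$ to $EP$.

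The only real subtlety, and the main point to monitor, is that because $P$ lies in $G$ rather than in $A$, individual elements of $P$ may act on $E$ through inner automorphisms, potentially enlarging $\cent H E$ beyond $\bZ(E)$; this is precisely why the strengthened kernel condition supplied by Corollary~\ref{cor:semisimple} (and ultimately by Theorem~\ref{thm:quasisimples}(4)), rather than a bare extensibility statement, is what we need as input so that the condition survives the pullback.
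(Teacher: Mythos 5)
Your proposal is correct and follows essentially the same route as the paper: identify $G/\cent GE$ with a subgroup of $\Aut(E)$, apply Corollary~\ref{cor:semisimple}, read off the stabiliser equality, and convert the extension with the kernel condition into an extension to $EP$ via Lemma~\ref{extendingclean}. The paper leaves the pullback from $E\rtimes Q$ to $E\rtimes P$ implicit, and your careful treatment of that step (including why the strengthened kernel condition is the right input) is exactly the intended argument.
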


\begin{proof}
We have that $G/\cent GE$ is isomorphic to a subgroup $C$ of $A=\Aut(E)$.
Now, $\IBr(b)$ consists of $A$-conjugates of $\vhi$. Therefore by
Corollary~\ref{cor:semisimple}, there exists $\chi \in \Irr(E)$ such that
$\chi^0=\vhi$, $I_A(\chi)=I_A(\vhi)$ and $\chi$ extends to some
$\tilde \chi\in\Irr(E\rtimes P)$ for $P \in \Syl_p(I)$ such that
$\ker(\tilde \chi)$ contains a Sylow $p$-subgroup of $\cent {E\rtimes P} E$.
Hence $\chi$ and $\vhi$ have the same stabiliser in $C$ and
$I_G(\chi)=I_G(\vhi)$. By Lemma \ref{extendingclean} we have that $\chi$
extends to $EP$. If $p\neq 2$ then $\chi$ can be chosen to be $p$-rational
by Corollary~\ref{cor:semisimple}.
\end{proof}


\begin{lem}   \label{lem:solv}
 Suppose that $E\le L$ are normal in $G$, with $L/E$ solvable. Suppose that
 $\chi\in\Irr(E)$ is $G$-invariant, and that $\chi^0=\vhi \in \IBr(E)$.
 Assume that $\chi$ extends to $P$, where $P/E$ is a Sylow $p$-subgroup
 of $G/E$. Suppose that $\eta\in\IBr(L|\vhi)$ is $G$-invariant.
 Then there exists $\delta \in \Irr(L|\chi)$ such that $\delta^0=\eta$,
 $\delta$ is $G$-invariant, and $\delta$ extends to $Q$,
 where $Q/L\in\Syl_p(G/L)$. Furthermore, if $p$ is odd and $\chi$ is
 $p$-rational, then $\delta$ can be chosen to be $p$-rational, too.
\end{lem}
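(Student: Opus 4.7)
\emph{Plan.} I would proceed by induction on $|L/E|$. The base case $L=E$ is immediate ($\delta:=\chi$, $\eta=\vhi$). For the inductive step, choose $M\lhd G$ with $E<M\le L$ so that $M/E$ is a chief factor of $G/E$, hence an elementary abelian $q$-group for some prime $q$. A standard Clifford reduction at a constituent $\mu\in\IBr(M)$ of $\eta|_M$, using the identity $G=L\cdot G_\mu$ (from Frattini and the $G$-invariance of $\eta$) and the compatibility of the ordinary and Brauer Clifford correspondents, allows one to pass to the inertia group $G_\mu$ when $\mu$ is not already $G$-invariant, after replacing $P$ by a suitable $G$-conjugate meeting $G_\mu$ in a Sylow $p$-subgroup modulo $E$. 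Having arranged $\mu$ to be $G$-invariant, one applies induction first to $(G,E,M,\chi,\mu)$ to obtain a $\delta_M\in\Irr(M|\chi)$ satisfying the required properties, then to $(G,M,L,\delta_M,\eta)$, whose hypotheses are thereby met. Thus we are reduced to the case where $L/E$ itself is elementary abelian of exponent $q$.

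In the case $q\ne p$, the group $L/E$ is a $p'$-group, and the reduction modulo $p$ gives a $G$-equivariant bijection $\Irr(L\mid\chi)\to\IBr(L\mid\vhi)$ (the ordinary and Brauer $2$-cocycles on $L/E$ coincide since $L/E$ is coprime to $p$). The $G$-invariant $\eta$ pulls back to a $G$-invariant $\delta$. For the extension, take $Q:=PL$: one has $P\cap L=E$ by coprimality, and the extensions $\tilde\chi\in\Irr(P\mid\chi)$ and $\delta\in\Irr(L\mid\chi)$ glue to a common extension on $PL$ by a standard coprime gluing argument, the obstruction on $PL/E\cong P/E\times L/E$ decomposing into trivial summands by coprimality. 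For $p$ odd with $\chi$ $p$-rational, $p$-rationality of $\delta$ follows from Theorem~\ref{thm:overpratp'}.

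In the case $q=p$, we have $L\le P$, so $\chi$ extends to $L$ via $\tilde\chi|_L$. Since $L/E$ is a $p$-group, every $p$-regular element of $L$ lies in $E$, so every linear character of $L/E$ inflated to $L$ has trivial Brauer restriction; consequently all extensions of $\chi$ to $L$ share the same Brauer restriction, which must then equal $\eta$. Finding a $G$-invariant such extension $\delta$ that additionally extends to some $Q$ with $Q/L\in\Syl_p(G/L)$ is a cohomological matter: the $P$-invariant extension $\tilde\chi|_L$ trivialises the relevant obstruction on $P/L\in\Syl_p(G/L)$, and since $\Irr(L/E)$ is a $p$-group, injectivity of restriction on $p$-primary $H^1$ globalises the solution. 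For $p$ odd with $\chi$ $p$-rational, the cleanest route is to replace $(G,E,\chi)$ by the isomorphic central $p'$-triple $(\Gamma,Z,\xi)$ of Theorem~\ref{thm:prationalisom}; the problem then becomes one about characters of $\Gamma/Z$ above a linear character of the center, where the desired conclusion is elementary and $p$-rationality transports back through the isomorphism.

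The principal obstacle will be the $q=p$ case, where simultaneously arranging $G$-invariance of $\delta$ and its extendibility to a Sylow $p$-subgroup $Q$ over $L$ is not immediate; the cleanest route is the reduction to the central $p'$-situation via Theorem~\ref{thm:prationalisom}. The Clifford reduction is routine but requires careful bookkeeping to transfer the Sylow-extendibility hypothesis on $\chi$ from $G$ into the inertia subgroup $G_\mu$.
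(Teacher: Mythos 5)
Your chief-factor induction is a plausible skeleton, but both terminal cases have gaps exactly at the crux of the lemma, namely the simultaneous $G$-invariance and extendibility to $Q$. In the case $q\ne p$ the gluing argument does not work as stated: $PL/E$ is a semidirect product $(L/E)\rtimes (P/E)$, not $P/E\times L/E$, and moreover a character $\delta\in\Irr(L\mid\chi)$ need not be an extension of $\chi$ (it can be fully ramified over $E$), so there are no two extensions of a common character to glue and no decomposition of the obstruction into coprime summands. In the case $q=p$ your $H^1$-restriction argument does produce a $G$-invariant extension $\delta$ of $\chi$ to $L$, but it says nothing about whether \emph{that} extension extends to $P$ (equivalently, to any $Q$ with $Q/L\in\Syl_p(G/L)$): the $G$-invariant extensions form a coset under $\Irr(L/E)^G$, the $P$-extendible ones a coset under the image of restriction of linear characters of $P/E$, and you never show these two cosets intersect. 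You flag this yourself as the principal obstacle and propose to resolve it via Theorem~\ref{thm:prationalisom}, but that theorem is only available for $p$ odd and $\chi$ $p$-rational; the main assertion of the lemma (in particular $p=2$, and $p$ odd with $\chi$ not $p$-rational) is therefore not covered by your argument.

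The missing idea is the rationality-free reduction to the case where $E$ is a \emph{central $p'$-subgroup}, which is how the paper argues: by the ordinary-modular character triple construction (Problem~(8.12) of \cite{N}) one may assume $E\le\bZ(G)$, and the hypothesis that $\chi$ extends to $P$ allows one to strip off the $p$-part of $E$ (ideas of Problem~(8.13) of \cite{N}); then $L$ is $p$-solvable and Isaacs' canonical set $B_{p'}(L)$ from \cite{I} does all three jobs at once: restriction to $p$-regular elements is an equivariant bijection $B_{p'}(L)\to\IBr(L)$ (so the lift of the $G$-invariant $\eta$ is $G$-invariant), $B_{p'}$-characters are $p$-rational by definition, and they extend to $Q$. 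In particular no induction on $|L/E|$ and no case distinction is needed; for the $p$-rational refinement one runs the same argument through the $p$-rationality-preserving triple isomorphism of Theorem~\ref{thm:prationalisom}. Your $q\ne p$ case could also be repaired by this same reduction (after it, $o(\delta)\delta(1)$ is prime to $p$ and the coprime extendibility criterion of \cite{Isa} applies to the $Q$-invariant $\delta$), but as written both of your cases rest either on a step that fails or on a tool restricted to odd $p$ and $p$-rational $\chi$.
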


\begin{proof}
By Problem (8.12) of \cite{N}, we may assume that
$E\le\bZ(G)$. Using that $\chi$ extends to a Sylow $p$-subgroup of $G$, we may
assume that $E$ is a $p'$-group. (Use the ideas of Problem (8.13) of \cite{N}.)
We have now that $L$ is $p$-solvable. In \cite{I}, Isaacs constructed a
canonical subset $B_{p'}(L) \subseteq \Irr(L)$ with certain remarkable
properties. Among others, restriction to $p$-regular elements of $L$
provides a canonical bijection $B_{p'}(L) \rightarrow {\rm IBr}(L)$
(by \cite[Cor.~(10.3)]{I}). Also, by definition, characters in $B_{p'}(L)$
are $p$-rational. Now,  let $\delta \in \Irr(L)$ be the character in
$B_{p'}(L)$ with $\delta^0=\eta$. Since $E$ is a $p'$-group, we have
that $\delta$ lies over $\chi$. Since $\eta$ is $G$-invariant, then $\delta$
is $G$-invariant (again using \cite[Cor.~(10.3)]{I}). Since $\delta\in B_{p'}(L)$,
then $\delta$ extends to $Q$ by \cite[Cor.~(6.3)]{Isa}. If $p$ is odd and $\chi$
is $p$-rational, then we use the same argument as before, but applying
Theorem~\ref{thm:prationalisom}.
\end{proof}

We are now ready to prove the main theorem, which we restate:

\begin{thm}
 Let $G$ be a finite group and let $p$ be a prime. Assume that $B$ is a
 $p$-block of $G$ with $\IBr(B)=\{\vhi\}$. Then there exists a character
 $\chi\in\Irr(G)$ with $\chi^0=\vhi$ and $\chi$ is $p$-rational if $p\ne 2$.
\end{thm}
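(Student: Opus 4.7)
The plan is to proceed by induction on $|G|$, using that $\vhi$ is automatically $G$-invariant since it is the only Brauer character of $B$.

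The first move will be a Clifford--Fong--Reynolds reduction. For every normal subgroup $N$ of $G$ and every constituent $\theta \in \IBr(N)$ of $\vhi_N$ whose inertia group $T = G_\theta$ is proper in $G$, the Fong--Reynolds correspondence produces a block $b_0$ of $T$ with unique Brauer character $\vhi_0$ inducing to $\vhi$; induction applied to $T$ yields a (for odd $p$, $p$-rational) lift $\chi_0 \in \Irr(T)$ of $\vhi_0$, and $\chi_0^G \in \Irr(G)$ is then a lift of $\vhi$ of the required form. Hence we may assume every such $\theta$ is $G$-invariant. Applying Theorem~\ref{thm:prationalisom} (for $p$ odd) or its analogue from \cite[Problem~(8.12)]{N} (for $p=2$) to the triple $(G, O_{p'}(G), \theta)$, where $\theta$ is the $G$-invariant constituent of $\vhi_{O_{p'}(G)}$, we pass to an isomorphic ordinary-modular character triple in which $O_{p'}(G) \subseteq \bZ(G)$, preserving $p$-rationality.

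When $G$ is $p$-solvable, the Fong--Swan theorem provides a lift of $\vhi$, and for odd $p$ the canonical lift in Isaacs' set $B_{p'}(G)$ \cite{I} is $p$-rational by construction. If $G$ is not $p$-solvable, the aim is to exhibit a normal perfect subgroup $E$ of $G$ with $E/\bZ(E)$ a direct product of non-abelian simple groups and $|\cent G E|$ prime to $p$. The natural choice is the quasi-simple layer of $G$ (the central product of its components), and additional Clifford-theoretic reductions, applied iteratively to intermediate normal subgroups encountered between $\bZ(G)$ and $E$, absorb any remaining $p$-elements in the centralizer. The block $b$ of $E$ covered by $B$ then has $\IBr(b)$ consisting of $G$-conjugates of a single Brauer character, so Corollary~\ref{situation} supplies a $G$-invariant $\chi_E \in \Irr(E)$ with $\chi_E^0$ the distinguished constituent of $\vhi_E$, extending to $EP$ for $P \in \Syl_p(G)$, and $p$-rational for $p \ne 2$.

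To conclude, the quotient $G/E$ will be solvable: $G/E\cent G E$ embeds in $\Out(E)$, which is solvable by Schreier's theorem (itself a consequence of the classification), and the reductions have arranged $\cent G E$ to be solvable as well. Applying Lemma~\ref{lem:solv} with $L = G$, $\chi := \chi_E$, and $\eta := \vhi$ then yields the desired character $\chi \in \Irr(G)$ with $\chi^0 = \vhi$, which is $p$-rational for odd $p$. The hard part of the argument will be the structural step above: guaranteeing that the iterated Clifford reductions actually produce a normal $E$ with both required features, namely $\cent G E$ of $p'$-order and solvable, so that both Corollary~\ref{situation} and Lemma~\ref{lem:solv} apply. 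This delicate interplay between the Clifford theory of Brauer characters and the generalized Fitting subgroup of $G$ is where the inductive machinery will need to be set up most carefully.
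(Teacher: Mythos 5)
There are two genuine gaps, and they sit exactly where you yourself admit the ``hard part'' is. First, your opening reduction is not valid as stated: the Fong--Reynolds correspondence operates at the level of \emph{blocks} of a normal subgroup, not of individual Brauer characters. If $\theta\in\IBr(N)$ is a constituent of $\vhi_N$ with proper inertia group $T$, the Clifford correspondent $\nu\in\IBr(T\mid\theta)$ does induce to $\vhi$, but there is no reason why the block of $T$ containing $\nu$ should have a unique irreducible Brauer character, so induction does not apply to it. Consequently you cannot assume that every constituent of $\vhi_N$ is $G$-invariant; what one does get (and what the paper uses) is only quasi-primitivity, i.e.\ $G$-invariance of the covered blocks, after which $\IBr(b)$ for the block $b$ of the layer $E=\bE(G)$ is a single $G$-\emph{orbit}, typically of size greater than one. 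This is precisely why Theorem~\ref{thm:quasisimples}(3),(4) and Corollary~\ref{situation} are formulated in terms of stabiliser-preserving lifts and extensions to $EP$ for $P\in\Syl_p(I)$, and why the paper must later work with the Clifford correspondents $\eta'$, $\nu'$ over $\theta$ inside the inertia group $I$.

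Second, your concluding step fails: $G/E$ is not solvable in general. The Schreier conjecture gives solvability of $\Out(S)$ for a \emph{simple} group $S$, but $\Out(E)$ for a layer with several isomorphic components contains the symmetric group permuting them, so $G/E\cent GE$ need not be solvable, and Lemma~\ref{lem:solv} with $L=G$ is unavailable (nor is there any reason for $\cent GE$ to be solvable or of $p'$-order after your unspecified ``iterated reductions''). The paper's route around this is the actual core of the proof: with $D$ a defect group of $b$ one shows every element of $\cent GD$ normalises each component (using that $B$ covers no defect-zero block of a non-central normal subgroup, so each $D\cap E_i>1$), whence $M:=E\,\cent GD=\bF^*(G)\cent GD$ is normal with $M/E$ solvable; a defect-group computation plus \cite[Lemma~3.1]{NT} shows $B$ is the \emph{only} block of $G$ covering the block $e$ of $M$, so every $\delta\in\IBr(T\mid\eta)$ induces to $\vhi$ and $\eta$ is fully ramified in $T/M$; Lemma~\ref{lem:solv} is applied only along the solvable section between $E$ and $I\cap M$; and the non-solvable top section is handled by passing to an ordinary-modular character triple $(X,Y,\lambda)$ with $Y$ central of $p'$-order, where full ramification guarantees (via \cite{NST}) that the relevant block of $X$ again has a unique Brauer character, so the inductive hypothesis (on $|G:\bZ(G)|$, not $|G|$) applies to $X$. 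None of this machinery is present in your sketch, so the argument as proposed does not go through.
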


\begin{proof}
We argue by induction on $|G:\bZ(G)|$.
We may assume that $\bO_p(G)=1$ since $\bO_p(G) \subseteq \ker(\vhi)$.
By the Fong--Reynolds Theorem \cite[Thm.~(9.14)]{N}), we may assume that
$B$ is quasi-primitive, i.e. every block of every normal subgroup
of $G$ covered by $B$ is $G$-invariant.

In particular, when $N\nor G$ and $B$ covers $\{\theta\}$, where
$\theta\in\Irr(N)$ has defect zero, then $\theta$ is $G$-invariant. According to
Theorem ~\ref{thm:prationalisom} we may
assume that $N$ is a central $p'$-subgroup of $G$ after using
ordinary-modular $p$-rational triples. Thus, if $S$ is the largest
solvable normal subgroup of $G$, we have that the Fitting subgroup $\bF(S)$
is central, and therefore $S=Z:=\bZ(G)$.

Let $b$ be the block of $E=\bE(G)$, the group of components of $G$, covered by
$B$. Then $\IBr(b)$ consists of $G$-conjugates of some $\theta \in \IBr(b)$.
Let $Z_0:=\bZ(E)=Z \cap E$.

Write $E/Z_0=E_1/Z_0 \times \cdots \times E_t/Z_0$, where $E_i/Z_0$ is the
direct product of $G$-conjugates of distinct simple groups $S_i/Z_0$ and
$E_i \nor G$. Recall that $[E_i,E_j]=1$ if $i\ne j$. Let $D$ be a defect group
of $b$. We claim that $E\cent GD/E$ is solvable.
Write $DZ_0/Z_0=D_1Z_0/Z_0 \times \cdots \times D_tZ_0/Z_0$, where
$D_i=D\cap E_i$. We have that $D_i$ is a defect group of the $G$-invariant
block $b_i$ of $E_i \nor G$ covered by $b$. By our assumption, we have that
$D_i>1$ for every $i$, since $B$ does not cover defect zero blocks of
non-central normal subgroups. Also, $G=E_i\norm G{D_i}$ by the Frattini
argument. Let $U/Z_0$ be a simple factor of $E_i/Z_0$, so that
$$E_i/Z_0=(U/Z_0)^{x_1} \times \cdots \times (U/Z_0)^{x_t} \, ,$$
for some $x_i \in \norm G{D_i}$. Now, it is well-known that
$$D_i=(D_i \cap U)^{x_1} \cdots (D_i \cap U)^{x_r} \,  $$
and we see that $D_i \cap U>1$. Since $x \in C:=\cent GD\le \cent G{D_i}$
centralises $D_i\cap U>1$, it
follows that $x$ normalises $U$ (since $G$ transitively permutes the
simple factors of $E_i/Z_0$). We know that $\norm GU/U\cent GU$ is solvable.
Hence $U\cent GU \cap C \nor C$  and its quotient is solvable.
Since this is true for all components $U$ of $G$, we have that
$$C/(C \cap \bigcap_U U\cent GU)$$
is solvable, where $U$ runs over the components of $G$. Notice that
$$\bigcap_U U\cent GU={\bF^*}(G) \, ,$$
as every component commutes with every other component and with
every normal solvable subgroup.

Now, let $M=EC=\bF^*(G)C \nor G$, where $M/E$ is solvable. (Recall that
$G=E\norm GD$ by the Frattini argument.) Let $e$ be the unique block of $M$
covered by $B$. Let $\eta \in \IBr(M)$ under $\vhi$, and let
$\theta \in \IBr(b)$ under $\eta$. Necessarily $\eta \in \IBr(e)$. Now, if
$Q$ is a defect group of $e$, then we may assume that $Q$ contains $D$, and
therefore $\cent GQ \le \cent GD\le M$. Therefore, $B$ is the only block of $G$
covering $e$ by \cite[Lemma~3.1]{NT}.

Now, let $T$ be the stabiliser of $\eta$ in $G$, and let $I$ be the stabiliser
of $\theta$ in $G$. By the Frattini argument, we have that $T=M(I\cap T)$.
Let $\nu\in\IBr(T|\eta)$ be the Clifford correspondent of $\vhi$ over $\eta$.
Now, if $\delta \in \IBr(T|\eta)$, we have that $\delta^G \in \IBr(G)$.
Notice that $\delta^G$ lies in a block that covers $e$ by \cite[Thm.~(9.2)]{N},
but this must be $B$. Hence $\delta^G=\vhi$ by hypothesis, and we have that
$\delta=\nu$, by the uniqueness in the Clifford correspondence. Hence, $\eta$
is fully ramified in $T/M$. Now, let $\nu' \in \IBr(I\cap T|\theta)$ and
$\eta' \in \IBr(I\cap M|\theta)$ be the Clifford correspondents
of $\nu$ and $\eta$ over $\theta$, respectively. Then,
$$(\nu(1)/\eta(1)) \eta=\nu_M=((\nu')^T)_M=((\nu')_{I\cap M})^M \, ,$$
and we deduce that $\nu'$ is fully ramified with respect to $\eta'$.

By hypothesis, all elements in $\IBr(b)$ are $G$-conjugate. Thus, by
Corollary~\ref{situation}, there are $\theta_1\in\IBr(b)$ and $\chi\in\Irr(b)$
such that $\chi^0=\theta_1$, with same stabiliser $I_1$ in $G$, and such
that $\chi$ extends to a Sylow $p$-subgroup $P/E$ of $I_1/E$. If $p\ne2$,
$\chi$ is in addition $p$-rational. Since all elements of $\IBr(b)$ are
$G$-conjugate, we may assume that $\chi^0=\theta$, that $\chi$ is
$I$-invariant, and that $\chi$ extends to $P$, where $P/E \in \Syl_p(I/E)$.

Now, by Lemma~\ref{lem:solv}, there exists $\eta_1 \in \Irr(I\cap M)$
such that $(\eta_1)^0=\eta'$, $\eta_1$ is $I\cap T$-invariant, and $\eta_1$
extends to $Q$, where $Q/(I\cap M) \in \Syl_p((I\cap T)/(I\cap M))$.
Now, as in the proof of Lemma \ref{lem:solv}, there exists a group $X$ with
a central $p'$-subgroup $Y$, with a linear character $\lambda \in \Irr(Y)$
such that the triples $(I\cap T, I\cap M, \eta_1)$ and $(X,Y, \lambda)$ are
isomorphic as ordinary-modular triples.
If $p\ne 2$ the isomorphism of character triples has the properties
from Theorem \ref{thm:prationalisom}. Now, notice that if $\lambda^X=f\delta$,
with $f\ge1$ and $\delta\in\IBr(X)$, and $B'$ is the block of $\delta$, then
$\IBr(B')=\{\delta\}$ (see, for instance, the proof of \cite[Thm.~2.1]{NST}).
Since $|X:\bZ(X)|<|G:\bZ(G)|$, we have that $\delta$ is liftable by induction
and liftable to a $p$-rational character in case of $p\ne2$. Hence, we have
that $\nu'$ is liftable, even to  a $p$-rational character for $p\ne2$.
Thus, there exists $\rho \in \Irr(I\cap T)$ such that
$\rho^0=\nu'$. Now, $(\nu')^T=\nu$ and thus $\nu^G=\vhi$, so $\nu'^G=\vhi$,
and $(\rho^G)^0=\nu'^G=\vhi$. We therefore deduce that $\rho^G$ is a lift
of $\vhi$ and $p$-rational if $p\ne2$.
\end{proof}


\end{document}